\theoremstyle{plain}
\newtheorem{teo}{Theorem}[section]
\theoremstyle{plain}
\newtheorem{cor}[teo]{Corollary}
\theoremstyle{plain}
\newtheorem{lemma}[teo]{Lemma}
\theoremstyle{plain}
\theoremstyle{plain}
\newtheorem{prop}[teo]{Proposition}
\theoremstyle{definition}
\newtheorem{remark}[teo]{Remark}
\theoremstyle{definition}
\newtheorem{example}[teo]{Example}
\renewcommand{\r}{\mathbb{R}}
\newcommand{\n}{\mathbb{N}}
\newcommand{\todeb}{\rightharpoonup}
\newcommand{\restr}[2]{\left.\kern-\nulldelimiterspace#1\vphantom{\big|}\right|_{#2}}
\newcommand{\orot}{\mathcal{R}}
\newcommand{\xprimo}{\begin{pmatrix}
		x' \\ 0
\end{pmatrix}}
\newcommand{\umatrix}[1]{\begin{pmatrix}
		#1 \\ 0
\end{pmatrix}}
\newcommand{\lmatrix}[1]{\begin{pmatrix}
		0 \\ #1
\end{pmatrix}}
\newcommand{\rskew}{\r_{\skw}^{3 \times 3}}
\newcommand{\aisolin}{\mathcal{A}_{\iso}^{\lin}}
\newcommand{\aiso}{\mathcal{A}_{\iso}}
\newcommand{\adet}{\mathcal{A}_{\det}}
\DeclareMathOperator{\vspan}{span}
\DeclareMathOperator{\Id}{Id}
\DeclareMathOperator{\SO}{SO}
\DeclareMathOperator{\skw}{skew}
\DeclareMathOperator{\sym}{sym}
\DeclareMathOperator{\dist}{dist}
\DeclareMathOperator{\curl}{curl}
\DeclareMathOperator*{\argmax}{argmax}
\DeclareMathOperator{\tang}{T\orot}
\DeclareMathOperator{\normal}{N\orot}
\DeclareMathOperator{\iso}{iso}
\DeclareMathOperator{\lin}{lin}
\DeclareMathOperator{\VK}{VK}
\DeclareMathOperator{\KL}{K}
\Crefname{lemma}{lemma}{lemmas}
\Crefname{lemma}{Lemma}{Lemmas}
\Crefname{prop}{proposition}{proposition}
\Crefname{prop}{Proposition}{Propositions}
\Crefname{cor}{corollary}{corollaries}
\Crefname{cor}{Corollary}{Corollaries}
\Crefname{remark}{remark}{remarks}
\Crefname{remark}{Remark}{Remarks}
\Crefname{teo}{theorem}{theorems}
\Crefname{teo}{Theorem}{Theorems}
\Crefname{section}{Section}{Sections}
\Crefname{app}{Appendix}{Appendices}
\Crefname{example}{Example}{Examples}
\title{Stability of the Von Kármán regime for thin plates under Neumann boundary conditions}
\author{Edoardo Giovanni Tolotti\footnote{Dipartimento di Matematica F. Casorati, Universit\`a di Pavia, Italy. \\ \indent Email address: edoardogiovann.tolotti01@universitadipavia.it} \hspace{.5mm} \orcidlink{0009-0004-3279-245X}}
\date{}
\begin{document}

\maketitle
\let\thefootnote\relax\footnotetext{This is a post-peer-review, pre-copyedit version of an article published in ESAIM: Control, Optimisation and Calculus of Variations. The final authenticated version is available in Open Access at \url{https://doi.org/10.1051/cocv/2025050}}
\thispagestyle{empty}

\section*{Abstract}

We analyse the stability of the Von K\'arm\'an model for thin plates subject to pure Neumann conditions and to dead loads, with no restriction on their direction. We prove a stability alternative, which extends previous results by Lecumberry and M\"uller in the Dirichlet case. Because of the rotation invariance of the problem, their notions of stability have to be modified and combined with the concept of optimal rotations due to Maor and Mora. Finally, we prove that the Von K\'arm\'an model is not compatible with some specific types of forces. Thus, for such, only the Kirchhoff model applies.

\vspace{.2cm}

\noindent\textbf{Keywords}: Dimension reduction $\cdot$ Thin plates $\cdot$ Nonlinear elasticity $\cdot$ $\Gamma$-convergence

\noindent\textbf{AMS Classification}: 74K20 $\cdot$ 74B20 $\cdot$ 74G65 $\cdot$ 49J45

\section{Introduction}

The Von K\'arm\'an model for plates was introduced in the early years of the XX century in the works of F\"oppl and Von K\'arm\'an \cite{FOPPL1907, VONKARMAN1910}. Despite being widely used by engineers, it took almost a century to see a rigorous mathematical derivation, obtained by Friesecke, James and M\"uller in \cite{FRIESECKE2006}, building on their pioneering rigidity estimate \cite{FRIESECKE2002}. In that work, the authors derived the Von K\'arm\'an model computing the $\Gamma$-limit of a suitable rescaling of the three-dimensional nonlinear elastic energy, as the thickness $h$ of the plate vanishes. Then, new mathematical questions naturally arose. Without attempting to be exhaustive, we recall some lines of research: derivation of viscoelastic Von K\'arm\'an models for plates \cite{FRIEDRICH2020}, homogenization of Von K\'arm\'an plates models \cite{VELCIC2016,NEUKAMM2013}, and analysis in the dynamic case of the Von K\'arm\'an equations \cite{ABELS2010,ABELS2011}. Furthermore, one may wonder whether, and how, boundary conditions and applied forces may change the energy scaling and the behaviour of quasi-minimizers.

In this work, we are interested in dead loads of body type. Nevertheless, we point out that the same analysis can be carried out for loads of surface type or for a combination of the two. In this framework, the main difficulty to overcome is the loss of compactness for sequences whose total energy scales like the elastic energy in the Von K\'arm\'an regime. To better understand this issue, let us briefly describe it. In the Von K\'arm\'an setting, the elastic energy $E_h$ per unit volume scales like $h^4$. Since the in-plane displacement scales as $h^2$, it is natural to assume the planar forces to scale like $h^2$ so that the work done by the forces is consistent with the Von K\'arm\'an regime. However, such a choice is also compatible with the Kirchhoff regime (firstly introduced in \cite{KIRCHOFF1850} and rigorously derived in \cite{FRIESECKE2002}) and with the so-called constrained Von K\'arm\'an regime (derived as a $\Gamma$-limit in \cite{FRIESECKE2006}). In these two cases, we have, respectively,
\begin{enumerate}[(i)]
	\item displacement of order $1$ and energy per unit volume of order $h^2$,
	\item displacement of order $h^{\alpha - 2}$ and energy per unit volume of order $h^\alpha$ for some $2 < \alpha < 4$.
\end{enumerate}
Note that in both scenarios the scaling of the work done by the forces is compatible with the corresponding elastic energy regime. Thus, a sequence of deformations $y_h$ with total energy of order $h^4$ may have elastic energy that scales as $h^\alpha$ for any $2 \leq \alpha \leq 4$. In particular, if $\alpha < 4$, then such a sequence has unbounded elastic energy in the Von K\'arm\'an regime, resulting in the aforementioned loss of compactness. This phenomenon can be interpreted as an instability of the Von K\'arm\'an model (see \cite{LECUMBERRY2009}).

The situation is different when the applied forces are purely normal. Indeed, in this case, the natural scaling for forces is $h^3$ (that pairs with the normal displacement of order $h$). As a consequence, there is no ambiguity between the elastic energy regimes. This setting was studied in the original work of Friesecke, James and M\"uller \cite{FRIESECKE2006}. Further analysis in the sole presence of normal forces was carried out in the dynamic setting in \cite{ABELS2010,ABELS2011}.

The more general case with planar forces has been treated by Lecumberry and M\"uller in \cite{LECUMBERRY2009} using a clever exclusion principle. They noted that there is a critical load $f$ that leads to the loss of validity of the Von K\'arm\'an model. Under some additional assumptions, they also proved that beyond this critical load, the infimum of the total Von K\'arm\'an energy is $-\infty$ (see also \cite{MADDALENA2018} for a further analysis of critical points of the Von K\'arm\'an energy). However, to avoid the mix-up of planar and normal components (of both forces and displacement) due to rotation invariance, they had to assume that part of the boundary was subject to a Dirichlet condition.

In the present work we extend this analysis to the purely Neumann case. Since the body is free to rotate, one cannot distinguish between normal and planar components of the applied forces. Thus, we suppose to have a sequence of forces $f_h$ that scale in all directions as $h^2$. For simplicity, we further assume the sequence to be of the form $f_h = h^2 f$ for some given $f$.

The first question to understand is how the load affects the rotation invariance of the plate. In general, one cannot expect the body to prefer just one specific rotation as in the case of clamped boundary conditions. It turns out that the concept of optimal rotations introduced by Maor and Mora in \cite{MAOR2021} is exactly the one needed. The set $\orot$ of such rotations is a submanifold of $\SO(3)$ that in our framework enjoys some additional properties which follow by the two-dimensional nature of the problem.

Secondly, we investigate how the stability conditions defined in \cite{LECUMBERRY2009} can be extended and how they relate to the rotational degree of freedom that the plate enjoys. We prove that one of the following alternatives holds (see \Cref{teo:stability_alternative} for a precise statement):
\begin{itemize}
	\item either the load is strong enough to have a non-trivial minimizer of the Kirchhoff model (failure of the stability condition \ref{item:stability_KL}),
	\item or the load is strong enough to have a non-trivial minimizer of the constrained Von K\'arm\'an model (failure of the stability condition \ref{item:stability_VK}),
	\item or the Von K\'arm\'an model is valid.
\end{itemize}
This result is similar in spirit to \cite[Theorem 4]{LECUMBERRY2009}.
Moreover, in \Cref{teo:implication_stability} we show that the stability condition \ref{item:stability_KL} implies condition \ref{item:stability_VK} as soon as the intensity of the load decreases.
The above implication is analogous to \cite[Theorem 6]{LECUMBERRY2009}.

Compared to the analysis in \cite{LECUMBERRY2009}, we observe a new phenomenon, which is one of the main novelties of this work: if for some optimal rotation $R$ we have $R^T f \cdot e_3 \neq 0$, then the stability condition \ref{item:stability_KL} must fail and both the Von K\'arm\'an model and its constrained version do not apply. More precisely, whenever $R^T f \cdot e_3 \neq 0$, every sequence of quasi-minimizers, whose total energy scales like $h^4$, has unbounded elastic energy in both the Von K\'arm\'an and the constrained Von K\'arm\'an regimes. Note that $e_3$ has a privileged role since it is the direction along which the plate is thin. The precise statement is given in \Cref{teo:stability_false}. One can interpret this result in the following way: it is possible to have a nontrivial minimizer of the Kirchhoff model either increasing the load (as already shown by Lecumberry and M\"uller in \cite{LECUMBERRY2009}) or applying a force whose direction leads to an optimal rotation $R$ of the plate such that $R^T f \cdot e_3 \neq 0$.

Lastly, in a similar fashion to \cite[Theorem 27]{LECUMBERRY2009}, we prove that if \ref{item:stability_VK} holds and $R^T f \cdot e_3 = 0$ for every optimal rotation, the total Von K\'arm\'an energy attains its infimum. Conversely, if \ref{item:stability_VK} fails, the Von K\'arm\'an total energy is unbounded as soon as the load undergoes a slight increase (i.e., $f$ is the critical load). These results are proved in \Cref{teo:JVK_has_minimum}.

The paper is organized as follows. In \Cref{section:notations} we introduce the notation and state the main results.
In \Cref{section:stability_alternative} we prove \Cref{teo:stability_false}--\ref{teo:implication_stability}, while \Cref{teo:JVK_has_minimum} is proved in \Cref{section:minimum}.
Two appendices conclude the manuscript: in \Cref{section:gamma_convergence} we recall known results of $ \Gamma $-convergence of the elastic energy, while in \Cref{app:optimal_rotations} we study fine properties of the set of optimal rotations.

\section{Notations and main results}\label{section:notations}

\subsection{Notations and functional setting}

We denote by $W \colon \r^{3 \times 3} \to [0, +\infty]$ the elastic energy density. We assume $W$ to be Borel measurable and to satisfy the following standard hypotheses:
\begin{enumerate}[start=1,label={(W\arabic*)}]
	\item $W(A) = 0 \iff A \in \SO(3)$, \label{item:minimization_W}
	\item $W$ is $C^2$ in a neighbourhood of $\SO(3)$, \label{item:regularity_W}
	\item $W$ is frame indifferent, i.e., $W(RA) = W(A)$ for every $R \in \SO(3)$ and for every $A \in \r^{3 \times 3}$, \label{item:frame_indifference}
	\item $W(A) \geq C\dist^2(A, \SO(3))$ for every $A \in \r^{3 \times 3}$, for some $C > 0$. \label{item:lower_bound_W}
\end{enumerate}
We will denote by $Q: \r^{3 \times 3} \to \r$ the quadratic form $D^2W(\Id)$ and by $\bar Q: \r^{2 \times 2} \to \r$ the reduced quadratic form
\[
	\bar Q(A) = \min_{a \in \r^3} Q(A + a \otimes e_3 + e_3 \otimes a) \,.
\]
By \ref{item:lower_bound_W} both $Q$ and $\bar Q$ are coercive over the set of symmetric matrices. 

We consider a thin plate $\Omega_h = S \times (-\frac{h}{2}, \frac{h}{2}) = S \times hI$ where $S \subset \r^2$ is an open, simply connected and bounded set and $I = (-\frac{1}{2}, \frac{1}{2})$. In terms of regularity of $\partial S$, we assume the following condition:
\begin{equation}\label{eq:boundary_condition}
	\begin{gathered}
		\text{there is a closed subset } \Sigma \subset \partial S \text{ with } \mathcal{H}^1(\Sigma) = 0 \text{ such that}\\
		 \text{the outer unit normal } \vec n \text{ to } S \text{ exists and is continuous on } \partial S \backslash \Sigma\,.
	\end{gathered}
\end{equation}
This property is called \textit{condition $(\ast)$} in \cite{HORNUNG2011}. We write $\Omega$ for the rescaled plate, that is, $\Omega = \Omega_1$. The symbol $\nabla_h y$ denotes the rescaled gradient of $y$, and it is defined as follows:
\[
	(\nabla_h y)_{ij} = \begin{cases}
		\partial_j y_i & \text{if } j \neq 3\,,\\
		\frac{1}{h} \partial_3 y_i & \text{otherwise}\,.
	\end{cases}
\]
The elastic energy is written in terms of $\Omega$ as follows
\[
	E_h(y) = \int_\Omega W(\nabla_h y) \, dx \quad \text{for} \, \, y \in W^{1, 2}(\Omega; \r^3) \,.
\]
Given a matrix $A \in \r^{3 \times 3}$ we will write $A'$ to denote the $2 \times 2$ submatrix obtained by removing the third row and column. Similarly, for a vector $ v \in \mathbb{R}^3 $ we will often write $v'$ in place of $(v_1, v_2)$ and $\nabla'$ instead of $\nabla_{x_1, x_2}$. Whenever we will sum or multiply matrices and vectors with different dimensions we will imply that the smaller one is naturally embedded in the bigger space by adding zeros in the missing entries. For example, if $A \in \r^{2 \times 2}$ and $G \in \r^{3 \times 3}$ the expression $A + G$ means $\iota(A) + G$ where
\[
\iota: \r^{2 \times 2} \hookrightarrow \r^{3 \times 3} \,, \quad  F \mapsto \begin{pmatrix}
A & 0 \\
0 & 0
\end{pmatrix} \,.
\]

Given two sequences $ a_{h} $ and $ b_{h} $ we will write $ O(a_{h}, b_{h}) $ meaning that there exists a constant $ C > 0 $ independent of $ h $ such that $ |O(a_{h}, b_{h})| \leq C(|a_{h}| + |b_{h}|) $.
In particular, if $ |a_{h}| \ll |b_{h}| $ for $ h \to 0 $, we have $ O(a_{h}, b_{h}) = O(b_{h}) $.

We will assume the applied forces to be of the form
\begin{equation}\label{eq:forces_form}
	f_h = h^2 f \,,
\end{equation}
with $f \in L^2(S; \r^3)$, $f$ not identically equal to $0$.

Following \cite{MAOR2021}, we introduce the set $\orot$ of optimal rotations, defined as 
\[
\orot = \argmax_{R \in \SO(3)} F(R) \,,
\]
where
\[
F(A) = \int_S f \cdot A \xprimo \, dx' \,.
\]
The set $\orot$ is a closed, connected, boundaryless and totally geodesic submanifold of $\SO(3)$ \cite[Proposition 4.1]{MAOR2021}. We refer to \Cref{app:optimal_rotations} for further properties of $\orot$. 

The total energy for a deformation $y \in W^{1, 2}(\Omega; \r^3)$ can be written as
\begin{equation}\label{eq:definition_work}
J_h(y) = E_h(y) - \int_\Omega f_h \cdot y \, dx = E_h(y) - h^2\int_\Omega f \cdot y \, dx\,.
\end{equation}
We suppose that
\begin{equation}\label{eq:mean_condition_forces}
\int_S f \, dx' = 0
\end{equation}
to avoid the trivial case in which the total energy has no lower bound. For a pair $(u, v) \in W^{1,2}(S; \r^2) \times W^{2, 2}(S)$ we define the Von K\'arm\'an energy
\[
E^{\VK}(u, v) = \dfrac{1}{8} \int_S \bar Q(\nabla' u^T + \nabla' u + \nabla' v \otimes \nabla' v) \, dx' + \dfrac{1}{24}\int_S \bar Q ((\nabla')^2 v)\, dx'\,.
\]
We will often consider $E^{\VK}$ restricted to the set of geometrically linearized isometries, namely
\[
\mathcal{A}_{\iso}^{\lin} = \left\{(u, v) \in W^{1,2}(S; \r^2) \times W^{2, 2}(S) \colon \nabla' u^T + \nabla' u + \nabla' v \otimes \nabla' v = 0 \text{ a.e. in } S \right\}\,.
\]
On this set $E^{\VK}$ only depends on $v$, and we have
\[
E^{\VK}(u, v) = \dfrac{1}{24}\int_S \bar Q((\nabla')^2 v)\, dx' \quad \forall \, (u, v) \in \aisolin \,.
\]
We define the set of isometric embeddings of $S$ as
\[
\aiso = \left\{y \in W^{2, 2}(S; \r^3) \colon \nabla' y^T \nabla' y = \Id \text{ a.e. in } S\right\} \,.
\]
For $y \in \aiso$ we introduce the Kirchhoff energy,
\begin{equation}
E^{\KL}(y) = \dfrac{1}{24} \int_S \bar Q(\nabla' y^T \nabla' \nu) \, dx' \,,
\end{equation}
where $\nu = \partial_1 y \land \partial_2 y$. Finally, we define the total energy in the Von K\'arm\'an and Kirchhoff regimes, respectively, as
\begin{align}
J^{\VK}(u, v, R, W) & = E^{\VK}(u, v) - \int_S f \cdot R \umatrix{u} \, dx' - \int_S f \cdot RW \lmatrix{v} \, dx' \\
& \hphantom{=} \, \, - \int_S f \cdot RW^2 \umatrix{x'}\, dx' \,, \\
J^{\KL}(y) & = E^{\KL}(y) - \int_S f \cdot y \, dx' \,.
\end{align}
The first functional is defined for every $(u, v) \in W^{1,2}(S; \r^2) \times W^{2, 2}(S)$, $R \in \orot$ and $W \in \normal_R$ (see \eqref{eq:definition_normal} in \Cref{app:optimal_rotations} for the definition of $\normal_R$). A quadruplet $(u, v, R, W)$ as above will be called admissible. The Kirchhoff functional is defined for every $y \in \aiso$.
These energies can be interpreted as the $ \Gamma $-limit of the corresponding rescalings of $ J_{h} $.
However, the $ \Gamma $-limit result alone is not satisfactory, since we lack the corresponding compactness properties for sequences with bounded total rescaled energy.

\subsection{Main results}

Similarly to the Dirichlet case treated in \cite{LECUMBERRY2009}, an exclusion principle involving the stability of $J^{\VK}$ and $J^{\KL}$ can be used to study the limit of minimizing sequences in the Von K\'arm\'an regime. In our setting, these stability conditions read as follows:
\begin{enumerate}[(S1)]
	\item $J^ {\KL}(y) \geq 0$ for every $y \in \aiso$ and, if $J^{\KL}(y) = 0$, then $y = \hat R \umatrix{x'}$ for some $\hat R \in \SO(3)$, \label{item:stability_KL}
	\item $J^{\VK}(u,v,R,W) \geq 0$ for every admissible quadruplet $(u,v, R, W)$ with $(u, v) \in \aisolin$ and, if $J^{\VK}(u, v, R, W) = 0$ for some $(u, v) \in \aisolin$, then $v$ is affine.\label{item:stability_VK}
\end{enumerate}
Conditions \ref{item:stability_KL}--\ref{item:stability_VK} have to be interpreted as follows: whenever a deformation minimizes the (non-negative) total energy then it must be a deformation with zero elastic energy.
In our framework, if \ref{item:stability_KL} holds, then the following compatibility condition is in force:
\begin{equation}\label{eq:compatibility}
    R^{T} f \cdot e_{3} = 0, \qquad \forall \, R \in \orot. \tag{C}
\end{equation}
This is the main statement of \Cref{teo:stability_false}.
Compatibility condition \eqref{eq:compatibility} is the rotation invariant generalization of the usual assumption on the scaling of the normal component of the forces, see for example \cite{FRIESECKE2006}. Indeed, the standard requirement $f_h \cdot e_3 = O(h^3)$ in our setting translates to $f \cdot e_3 = 0$ (see \eqref{eq:forces_form}).

From now on, unless otherwise stated, $ (y_{h}) \subset W^{1,2}(S;\mathbb{R}^3) $ will denote a quasi-minimizing sequence for $ \frac{1}{h^{4}}J_{h} $, namely
\begin{equation}\label{eq:quasi-minimizing}
    \limsup_{h \to 0} \frac{1}{h^4}\left(J_h(y_h) - \inf_y J_h(y)\right) = 0 \,.
\end{equation}

\begin{teo} \label{teo:stability_false}
    Assume that \eqref{eq:compatibility} is not valid.
    Then \ref{item:stability_KL} fails.
    Moreover, up to a subsequence, every sequence $ (y_{h}) $ of quasi-minimizers in the sense of \eqref{eq:quasi-minimizing} converge strongly in $ W^{1, 2}(\Omega;\mathbb{R}^3) $ to a minimizer $ \bar y \in \aiso $ of $ J^{\KL} $ with $ \bar y \neq R \xprimo $ for every $ R \in \SO(3) $.
\end{teo}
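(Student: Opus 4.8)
The first claim is the contrapositive of the fact that \ref{item:stability_KL} forces \eqref{eq:compatibility}, and I would prove it by producing a competitor in $\aiso$ on which $J^{\KL}$ is negative, obtained by a cheap out‑of‑plane bending at an optimal rotation witnessing the failure of \eqref{eq:compatibility}. Set $m:=\max_{R\in\SO(3)}F(R)$, so $\orot=\{F=m\}$; since $\diag(-1,-1,1)\in\SO(3)$ sends $\xprimo$ to $-\xprimo$, we get $F(R\diag(-1,-1,1))=-F(R)$, hence $m\ge 0$. For $R\in\SO(3)$ the flat map $x'\mapsto R\xprimo$ belongs to $\aiso$ with constant unit normal $Re_3$, so $E^{\KL}(R\xprimo)=0$ and $J^{\KL}(R\xprimo)=-F(R)$; in particular $\inf_{R\in\SO(3)}J^{\KL}(R\xprimo)=-m$, attained exactly on $\orot$. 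Now assume \eqref{eq:compatibility} fails and fix $R_0\in\orot$ with $g:=(R_0e_3)\cdot f\not\equiv 0$ in $L^2(S)$. Since polynomials are dense in $L^2(S)$ and, for each $k$, the powers $(\omega\cdot x')^k$ with $\lvert\omega\rvert=1$ span the homogeneous polynomials of degree $k$, there are a unit vector $\omega\in\r^2$ and $\phi\in C^\infty(\r)$ with $\int_S g(x')\,\phi(\omega\cdot x')\,dx'\ne 0$; replacing $\phi$ by $\phi-\phi(0)$ (which leaves the integral unchanged, by \eqref{eq:mean_condition_forces}) and possibly by $-\phi$, we may assume $\phi(0)=0$ and that this integral is positive. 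Let $(\Psi_t)_t\subset\aiso$ be the family of isometric embeddings given by the cylinders with generators along $\omega^\perp$ erected over the unit‑speed planar curves $\gamma_t(s)=\int_0^s\bigl(\cos(t\phi'(\sigma)),\sin(t\phi'(\sigma))\bigr)\,d\sigma$, so that $\Psi_0=\xprimo$ and $\partial_t\Psi_t\big|_{t=0}=\phi(\omega\cdot x')\,e_3$, and put $y_t:=R_0\Psi_t\in\aiso$. Since $E^{\KL}\ge 0$ vanishes at $y_0$, the map $t\mapsto E^{\KL}(y_t)$ has a critical point at $t=0$, whereas $\frac{d}{dt}\big|_{t=0}\int_S f\cdot y_t\,dx'=\int_S(R_0^T f)\cdot\partial_t\Psi_t\big|_{t=0}\,dx'=\int_S g(x')\,\phi(\omega\cdot x')\,dx'>0$; hence $J^{\KL}(y_t)<J^{\KL}(R_0\xprimo)=-m\le 0$ for small $t>0$. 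Therefore \ref{item:stability_KL} fails, and — what is used below — $\mu:=\inf_{\aiso}J^{\KL}<-m\le 0$.

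I would then record two preliminary facts about quasi‑minimizers. First, $\mu>-\infty$: for $y\in\aiso$, writing $y_S:=\lvert S\rvert^{-1}\int_S y$, \eqref{eq:mean_condition_forces} gives $\lvert\int_S f\cdot y\rvert=\lvert\int_S f\cdot(y-y_S)\rvert\le C\lVert\nabla'y\rVert_{L^2(S)}=C\sqrt{2\lvert S\rvert}$, so $J^{\KL}\ge -C\sqrt{2\lvert S\rvert}$ on $\aiso$. Second, a recovery sequence $z_h\to\bar y$ in $L^2(\Omega)$ for the Kirchhoff $\Gamma$‑limit (\Cref{section:gamma_convergence}) has $h^{-2}J_h(z_h)\to J^{\KL}(\bar y)$, so $\limsup_h h^{-2}\inf_y J_h\le\mu$ and in particular $\inf_y J_h\le\tfrac\mu2 h^2<0$ for small $h$. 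The decisive estimate is a coercivity bound: \emph{if $(z_h)$ satisfies $J_h(z_h)\le h^2\mu+o(h^2)$, then $E_h(z_h)=O(h^2)$.} Indeed, the geometric rigidity estimate \cite{FRIESECKE2002} and \ref{item:lower_bound_W} provide $R_h\in\SO(3)$ with $\lVert\nabla_h z_h-R_h\rVert_{L^2(\Omega)}^2\le CE_h(z_h)$, so $\lVert\nabla z_h\rVert_{L^2(\Omega)}^2\le\lVert\nabla_h z_h\rVert_{L^2(\Omega)}^2\le C(E_h(z_h)+1)$; using \eqref{eq:mean_condition_forces} and Poincaré--Wirtinger,
\[
\Bigl\lvert\int_\Omega f\cdot z_h\Bigr\rvert=\Bigl\lvert\int_\Omega f\cdot\Bigl(z_h-\frac{1}{\lvert\Omega\rvert}\int_\Omega z_h\Bigr)\Bigr\rvert\le C\lVert\nabla z_h\rVert_{L^2(\Omega)}\le C\bigl(1+\sqrt{E_h(z_h)}\bigr),
\]
and inserting this in $E_h(z_h)=J_h(z_h)+h^2\int_\Omega f\cdot z_h\le h^2\mu+o(h^2)+Ch^2(1+\sqrt{E_h(z_h)})$ and solving for $E_h(z_h)$ gives $E_h(z_h)=O(h^2)$.

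Hence any such $(z_h)$ has $h^{-2}E_h(z_h)$ bounded, so by the compactness theorem in the Kirchhoff regime (\Cref{section:gamma_convergence}), up to a subsequence and after subtracting constants, there are $\hat R_h\in\SO(3)$ with $\hat R_h^T z_h\to z$ strongly in $W^{1,2}(\Omega;\r^3)$ for some $z\in\aiso$ independent of $x_3$; along a further subsequence $\hat R_h\to\hat R\in\SO(3)$, whence $z_h\to\hat R z\in\aiso$ strongly in $W^{1,2}$. By frame indifference of $E_h$ and $E^{\KL}$, the $\Gamma$‑liminf inequality gives $\liminf_h h^{-2}E_h(z_h)\ge E^{\KL}(\hat R z)$, while $\int_\Omega f\cdot z_h\to\int_S f\cdot\hat R z$ (by \eqref{eq:mean_condition_forces} and the $L^2$ convergence, up to the subtracted constants); thus $\liminf_h h^{-2}J_h(z_h)\ge J^{\KL}(\hat R z)\ge\mu$. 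Applied to near‑minimizers of $J_h$ this yields $\liminf_h h^{-2}\inf_y J_h\ge\mu$, so $h^{-2}\inf_y J_h\to\mu$. Now let $(y_h)$ be as in \eqref{eq:quasi-minimizing}: then $J_h(y_h)=\inf_y J_h+o(h^4)$, so $h^{-2}J_h(y_h)\to\mu$ and $E_h(y_h)=O(h^2)$, and the argument above applies; writing $\bar y$ for the (relabelled) limit, $y_h\to\bar y$ strongly in $W^{1,2}(\Omega;\r^3)$, $\bar y\in\aiso$, and $\mu=\lim_h h^{-2}J_h(y_h)\ge J^{\KL}(\bar y)\ge\mu$, so $\bar y$ minimizes $J^{\KL}$ over $\aiso$.

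Finally, if $\bar y$ were flat, $\bar y=R\xprimo+c$ for some $R\in\SO(3)$ and $c\in\r^3$, then $E^{\KL}(\bar y)=0$ and $J^{\KL}(\bar y)=-F(R)\ge -m>\mu$ (the constant $c$ drops by \eqref{eq:mean_condition_forces}), contradicting the minimality of $\bar y$; hence $\bar y\ne R\xprimo$ for every $R\in\SO(3)$. Equivalently $E^{\KL}(\bar y)>0$ (by coercivity of $\bar Q$ on symmetric matrices), so $E_h(y_h)\sim h^2$ and the quasi‑minimizers genuinely leave the Von Kármán and constrained Von Kármán regimes. I expect the main obstacle to be the construction in the first paragraph: upgrading the mere failure of \eqref{eq:compatibility} to the strict inequality $\mu<-m$. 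The two ingredients that have no counterpart in the Dirichlet analysis of \cite{LECUMBERRY2009} are the use of an \emph{optimal} rotation $R_0$ (so that $J^{\KL}(R_0\xprimo)=-m=\inf_{R}J^{\KL}(R\xprimo)$ is the quantity to beat) and the observation that $g=(R_0e_3)\cdot f\not\equiv 0$ is already detected by some directional power $(\omega\cdot x')^k$, which drives the explicit cylindrical bending $y_t$. The remaining technical point is the coercivity bound $E_h(z_h)=O(h^2)$, which combines geometric rigidity with the vanishing mean of $f$.
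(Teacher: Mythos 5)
Your proof is correct, but it takes a genuinely different and more direct route than the paper's. Where the two proofs agree: both derive $E_h(y_h)=O(h^2)$ from the rigidity estimate together with $\int_S f\,dx'=0$ and a Poincar\'e--Wirtinger bound, and both then pass to a minimizer of $J^{\KL}$ via the Kirchhoff compactness and $\Gamma$-convergence results in \Cref{section:gamma_convergence}. They diverge on the key point, namely ruling out a flat limit (equivalently, ruling out $E_h(y_h)=o(h^2)$). The paper does this indirectly through \Cref{teo:compatibility_forces}: it assumes $E_h(y_h)\ll h^2$, splits into three sub-cases according to the limit of $D_h/h^4$, compares with tailored test deformations in each regime, and in the hardest case ($D_h/h^4\to\infty$) invokes the density of $\vspan\adet$ in $L^2(S)$ (\Cref{lemma:density_adet}, proved via a Cybenko-type discriminatory argument) together with \Cref{lemma:contruction_isometries}. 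You instead prove directly that $\mu:=\inf_{\aiso}J^{\KL}<-m:=-\max F$ by exhibiting an explicit cylindrical competitor $y_t=R_0\Psi_t$ bent along a ridge direction $\omega$ detected by $g:=(R_0e_3)\cdot f\not\equiv 0$; the only approximation fact you need is that ridge monomials $(\omega\cdot x')^k$ span a dense subspace of $L^2(S)$, which is elementary. Once $\mu<-m$ is known, a flat limit $R\xprimo+c$, which has $J^{\KL}=-F(R)\ge -m>\mu$, cannot minimize, and the failure of \ref{item:stability_KL} and $E^{\KL}(\bar y)>0$ follow at once. Your computation of the first variation is sound: $E^{\KL}(y_t)=O(t^2)$ because the second fundamental form of the cylinder is $O(t)$, and the first-order term of the load is exactly $-t\int_S g\,\phi(\omega\cdot x')\,dx'<0$ after the sign normalization, while the normalization $\phi(0)=0$ is harmless since $\int_S g\,dx'=0$ by \eqref{eq:mean_condition_forces}. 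What each approach buys: yours gives a constructive, quantitative proof that $\mu<-m$ (hence a direct route to the failure of \ref{item:stability_KL}) using only elementary approximation by ridge polynomials, whereas the paper's route through \Cref{teo:compatibility_forces} is heavier but is reused essentially in the proof of \Cref{teo:stability_alternative}, so in the paper's overall economy it is not wasted. One small presentational point: when you invoke the rigidity estimate you write the bound with a local (non-constant) rotation $R_h$; that is the correct form to use here, since the constant-rotation version carries an extra factor $h^{-1}$, and your Poincar\'e--Wirtinger step only needs a uniform $L^2$ bound on $\nabla z_h$, which the local version provides.
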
 

\Cref{teo:stability_false} shows that in the purely Neumann case, some forces are incompatible with the Von K\'arm\'an regime. 
In particular, if \eqref{eq:compatibility} is not in force, the energy of any sequence of quasi-minimizers as in \eqref{eq:quasi-minimizing} scales like $ h^{2} $, namely
\begin{equation}
    0 < \liminf_{h \to 0} \frac{1}{h^{2}}E_{h}(y_{h}) < + \infty.
\end{equation}

Next, we state the stability alternative analogue to \cite[Theorem 4]{LECUMBERRY2009}.

\begin{teo}\label{teo:stability_alternative}
	Let $(y_h) \subset W^{1, 2}(S; \r^3)$ be a sequence of quasi-minimizers in the sense of \eqref{eq:quasi-minimizing}. Suppose that conditions \ref{item:stability_KL}--\ref{item:stability_VK} hold true. Then $\limsup_{h \to 0} \frac{1}{h^4} E_h(y_h) \leq C$ and there are sequences $(R_h) \subset \SO(3)$ and $(c_h) \subset \r^3$ such that, setting $\tilde y_h$ as $\tilde y_h = R_h^T y_h + c_h$, we have the following convergences (up to a subsequence):
	\begin{enumerate}[(a)]
		\item $u_h = \dfrac{1}{h^2}\displaystyle\int_I \left(\tilde y_h' - x'\right) \, dx_3 \todeb \bar u$ in $W^{1, 2}(S; \r^2)$,
		\item $v_h = \dfrac{1}{h}\displaystyle\int_I \tilde y_{h, 3} \, dx_3 \to \bar v$ in $W^{1, 2}(S)$ with $\bar v \in W^{2, 2}(S)$,
		\item $R_h \to \bar R \in \orot$,
		\item $\frac{1}{h}(P(R_h) - R_h) \to \bar R \bar  W $ with $\bar W \in \normal_R$,
	\end{enumerate}
	where $P: \SO(3) \to \orot$ is the projection onto $\orot$. Finally, the quadruplet $(\bar u, \bar v, \bar R, \bar W)$ minimizes $J^{\VK}$.
\end{teo}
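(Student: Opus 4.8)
The plan is to split the proof into three steps: first the a priori bound $\limsup_{h\to0}\frac1{h^4}E_h(y_h)\le C$, which is where conditions \ref{item:stability_KL}--\ref{item:stability_VK} are genuinely used; then the compactness giving $R_h,c_h$ and the convergences (a)--(d); and finally the identification of $(\bar u,\bar v,\bar R,\bar W)$ as a minimizer of $J^{\VK}$ via $\Gamma$-convergence. Throughout write $m:=\max_{R\in\SO(3)}F(R)$.

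For the a priori bound, two facts hold unconditionally. Testing $\inf_yJ_h$ with the rigid competitor $y(x)=R(\xprimo+x_3e_3)$, $R\in\orot$, gives $\inf_yJ_h\le-h^2m$, so by \eqref{eq:quasi-minimizing} $J_h(y_h)\le-h^2m+o(h^4)$; and the quantitative rigidity estimate, in the form uniform over thin plates (see \Cref{section:gamma_convergence}), produces a field $R_h\in W^{1,2}(S;\SO(3))$ with $\|\nabla_hy_h-R_h\|_{L^2(\Omega)}^2\lesssim E_h(y_h)$ and $\|\nabla'R_h\|_{L^2(S)}^2\lesssim\frac1{h^2}E_h(y_h)$. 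Arguing by contradiction, assume $\delta_h^2:=E_h(y_h)$ satisfies $\delta_h/h^2\to\infty$ along a subsequence. Combining the two facts with a Poincaré inequality bounds $\|y_h\|_{L^2(\Omega)}$ uniformly and forces $\delta_h\lesssim h$, so it remains to rule out the two intermediate scalings. If $\delta_h\sim h$ along a further subsequence, the Kirchhoff compactness applied to $y_h$ (after a constant rotation and a translation) gives $y_h\to\bar y\in\aiso$ in $W^{1,2}$; passing to the limit in $\frac1{h^2}(J_h(y_h)+h^2m)$ and combining the $\Gamma$-liminf inequality with $J_h(y_h)\le-h^2m+o(h^4)$ yields $J^{\KL}(\bar y)\le0$, hence $\bar y$ affine by \ref{item:stability_KL}; but an affine element of $\aiso$ has constant Gauss map, and then, writing $\frac1{h^2}E_h(y_h)=\frac1{h^2}(J_h(y_h)+h^2m)+\left(\int_\Omega f\cdot y_h\,dx-m\right)$ and using quasi-minimality at the Kirchhoff scale together with the fact that $\bar y$ minimizes $J^{\KL}$, one gets $\frac1{h^2}E_h(y_h)\to0$, contradicting $\delta_h\sim h$. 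If instead $h^2\ll\delta_h\ll h$, then $\|\nabla'R_h\|_{L^2}\to0$, so $R_h$ is asymptotically constant; the constrained Von Kármán compactness of \Cref{section:gamma_convergence} applied to the rescalings $\frac1{\delta_h^2}\int_I(\tilde y_h'-x')\,dx_3$ and $\frac1{\delta_h}\int_I\tilde y_{h,3}\,dx_3$ provides $(\bar u,\bar v)\in\aisolin$ with $\liminf_h\frac1{\delta_h^2}E_h(y_h)\ge\frac1{24}\int_S\bar Q((\nabla')^2\bar v)\,dx'$; keeping the free rotation near $\orot$ throughout — here one uses $m=\max_{\SO(3)}F$ and, crucially, the compatibility \eqref{eq:compatibility}, which is in force because \ref{item:stability_KL} holds (\Cref{teo:stability_false}) — makes the load a lower-order perturbation, so the functional governing $\frac1{\delta_h^2}(J_h(y_h)+h^2m)$ is in the limit $J^{\VK}$ restricted to $\aisolin$, and \ref{item:stability_VK} forces $\bar v$ to be affine, so that $\frac1{\delta_h^2}E_h(y_h)\to0$, again a contradiction. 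This establishes the bound.

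For the compactness, now $\|\nabla'R_h\|_{L^2}^2\lesssim h^2\to0$, so the rotation field collapses: take $R_h$ the $\SO(3)$-projection of a spatial average and $c_h$ the translation fixing the mean, and set $\tilde y_h=R_h^Ty_h+c_h$; the standard Von Kármán compactness (\Cref{section:gamma_convergence}) yields (a) and (b) with $\bar v\in W^{2,2}(S)$. For (c): from $J_h(y_h)\le-h^2m+o(h^4)$ and $E_h(y_h)\ge0$ one has $\int_\Omega f\cdot y_h\,dx\ge m-o(1)$, while $\int_I\tilde y_h\,dx_3=\xprimo+h^2\umatrix{u_h}+h\lmatrix{v_h}$ and $\int_Sf\,dx'=0$ give $\int_\Omega f\cdot y_h\,dx=F(R_h)+h^2\!\int_Sf\cdot R_h\umatrix{u_h}\,dx'+h\!\int_Sf\cdot R_h\lmatrix{v_h}\,dx'=F(R_h)+o(1)$, so $F(R_h)\to m$ (as $F(R_h)\le m$), hence $\dist(R_h,\orot)\to0$ and, up to a subsequence, $R_h\to\bar R\in\orot$. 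For (d): from the algebraic identity $J_h(y_h)+h^2m=E_h(y_h)+h^2(m-F(R_h))-h^4\!\int_Sf\cdot R_h\umatrix{u_h}\,dx'-h^3\!\int_Sf\cdot R_h\lmatrix{v_h}\,dx'$, together with $J_h(y_h)+h^2m\le o(h^4)$, $E_h(y_h)\ge0$, the quadratic growth $m-F(R)\ge c\,\dist(R,\orot)^2$ near $\orot$ (\Cref{app:optimal_rotations}), and the estimate $f\cdot R_he_3=f\cdot(R_h-P(R_h))e_3=O(\dist(R_h,\orot))$ in $L^2(S)$ — valid by \eqref{eq:compatibility} — one obtains $\dist(R_h,\orot)=O(h)$; thus $\frac1h(P(R_h)-R_h)$ is bounded and converges, up to a subsequence, to $\bar R\bar W$ with $\bar W\in\normal_{\bar R}$ (see \eqref{eq:definition_normal}).

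To identify the limit, divide the last identity by $h^4$ and let $h\to0$. By the $\Gamma$-liminf inequality of \Cref{section:gamma_convergence}, $\liminf_h\frac1{h^4}E_h(y_h)\ge E^{\VK}(\bar u,\bar v)$. For the load terms: $u_h\to\bar u$ strongly in $L^2$ and $R_h\to\bar R$ give $\int_Sf\cdot R_h\umatrix{u_h}\,dx'\to\int_Sf\cdot\bar R\umatrix{\bar u}\,dx'$; by \eqref{eq:compatibility} and (d), $\frac1h f\cdot R_he_3=f\cdot\frac1h(R_h-P(R_h))e_3$ converges, so $\frac1h\!\int_Sf\cdot R_h\lmatrix{v_h}\,dx'$ converges; and a Taylor expansion of $F$ at $P(R_h)\in\orot$, where $dF$ vanishes (points of $\orot$ are critical for $F$ on $\SO(3)$), shows $\frac1{h^2}(m-F(R_h))$ converges. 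Identifying these three limits — with the signs and normalization of $\normal_{\bar R}$ fixed in \Cref{app:optimal_rotations} — with the three load terms of $J^{\VK}$, one gets $\liminf_h\frac1{h^4}(J_h(y_h)+h^2m)\ge J^{\VK}(\bar u,\bar v,\bar R,\bar W)$. On the other hand, the $\Gamma$-limsup (recovery sequences from \Cref{section:gamma_convergence} plus the explicit load terms) yields $\frac1{h^4}\inf_y(J_h+h^2m)\le\inf J^{\VK}+o(1)$, so by \eqref{eq:quasi-minimizing} $\limsup_h\frac1{h^4}(J_h(y_h)+h^2m)\le\inf J^{\VK}$. Comparing the two bounds, $J^{\VK}(\bar u,\bar v,\bar R,\bar W)=\inf J^{\VK}$, so the quadruplet is a minimizer (and, as a byproduct, $\frac1{h^4}E_h(y_h)\to E^{\VK}(\bar u,\bar v)$).

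I expect the main obstacle to be the a priori bound: a bounded rescaled total energy $\frac1{h^4}J_h(y_h)$ does not by itself bound $\frac1{h^4}E_h(y_h)$, and excluding the Kirchhoff and constrained Von Kármán scalings requires both matching the $\Gamma$-limits at those scales and exploiting the rotational freedom so that the load remains lower order — precisely the point where \ref{item:stability_KL}, \ref{item:stability_VK} and \eqref{eq:compatibility} interact. Within it, the subtle step is the ``improvement'' that an affine Kirchhoff (resp. constrained) limit forces the elastic energy to fall strictly below the corresponding scale, an iteration in the spirit of \cite{LECUMBERRY2009}.
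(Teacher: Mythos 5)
Your proposal follows the paper's three-step architecture (a priori bound via the stability alternative, compactness, then $\Gamma$-convergence identification), but derives the rotation distance estimates in a genuinely different way. For part (d) the paper compares $y_h$ with explicit test deformations built from $P(R_h)$, expands $R_h=P(R_h)e^{d_hW_h}$, and reads off $d_h=O(h)$ from the sign of $F(\bar R\bar W_1^2)$. You instead exploit the algebraic identity $J_h(y_h)+h^2m=E_h(y_h)+h^2(m-F(R_h))-h^4\int_Sf\cdot R_h\umatrix{u_h}\,dx'-h^3\int_Sf\cdot R_h\lmatrix{v_h}\,dx'$, the non-negativity of $E_h$, the linearized estimate $R_h^Tf\cdot e_3=O(d_h)$ (available by \eqref{eq:compatibility}), and a Morse--Bott type quadratic growth $m-F(R)\gtrsim\dist(R,\orot)^2$ near $\orot$. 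This is cleaner than the paper's test-deformation comparison and the same mechanism also recovers the bound $d_h=O(h^{-1}\sqrt{D_h})$ that the paper needs in its Step 2. What your route buys is economy: the cost is that you invoke the quadratic growth as if it were in \Cref{app:optimal_rotations}, whereas the appendix only records $F(RW^2)<0$ for $0\ne W\in\normal_R$; that the Hessian is \emph{uniformly} negative definite on $\normal_R$ over the compact set $\orot$ (which is what gives the quantitative growth) would have to be stated and proved as a separate lemma.

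Two places are too compressed to stand as written. First, the scaling factors in the constrained Von K\'arm\'an rescalings are off: for $h^2\ll\delta_h\ll h$ the compactness statement yields bounds for $\frac{h^2}{\delta_h^2}\int_I(\tilde y_h'-x')\,dx_3$ and $\frac{h}{\delta_h}\int_I\tilde y_{h,3}\,dx_3$, not $\frac1{\delta_h^2}$ and $\frac1{\delta_h}$. Second, and more importantly, in the intermediate regime the claim that keeping $R_h$ near $\orot$ ``makes the load a lower-order perturbation'' is misleading: the load terms survive at order $\delta_h^2$ and contribute the three loading terms of $J^{\VK}$, including the $W$-dependent ones. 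To make the contradiction with \ref{item:stability_VK} rigorous you must reproduce the analogue of the paper's Step 2: first establish $d_h=O(h^{-1}\sqrt{D_h})$ (your identity gives it, but you don't say so), then extract $\bar W\in\normal_{\bar R}$ as the limit of $\frac{h}{\sqrt{D_h}}d_hW_h$, and only then apply \ref{item:stability_VK} to the full quadruplet $(\bar u,\bar v,\bar R,\bar W)$. As written that step is asserted rather than proved, though the surrounding material shows you do understand the mechanism.
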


Similarly to \cite[Theorem 6]{LECUMBERRY2009} we can show that \ref{item:stability_KL} and \ref{item:stability_VK} are in a relationship, with the former being essentially stronger than the latter.

\begin{teo}\label{teo:implication_stability}
	Suppose that \ref{item:stability_KL} holds. Then $J^{\VK}(u, v, R, W) \geq 0$ for every admissible quadruplet with $(u, v) \in \aisolin$. Moreover, \ref{item:stability_VK} holds for the functional
	\begin{align}
	J^{\VK}_\varepsilon(u, v, R, W) & = E^{\VK}(u, v) - (1-\varepsilon)\int_S f \cdot R \umatrix{u} \, dx' \\
	& \hphantom{=} \, \, - (1-\varepsilon)\int_S f \cdot RW \lmatrix{v} \, dx' - (1-\varepsilon)\int_S f \cdot RW^2 \umatrix{x'}\, dx' \\
	\end{align}
	for every $\varepsilon \in (0, 1)$.
\end{teo}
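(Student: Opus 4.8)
The plan is to prove the non-negativity of $J^{\VK}$ on geometrically linearized isometries by comparing it with $J^{\KL}$ along a one-parameter family of \emph{exact} isometric embeddings, and then to deduce the full stability of $J^{\VK}_\varepsilon$ by a convexity argument. The heuristic is that $J^{\VK}(\,\cdot\,,R,W)$ is the second-order expansion of $J^{\KL}$ around the rigid configuration $R\xprimo$, so \ref{item:stability_KL} should force it to be non-negative. I would start by recording two consequences of \ref{item:stability_KL}. Since $\rho\xprimo\in\aiso$ with $J^{\KL}(\rho\xprimo)=-F(\rho)$ for every $\rho\in\SO(3)$, condition \ref{item:stability_KL} gives $F\le0$ on $\SO(3)$, hence $\max_{\SO(3)}F\le0$; combined with $F(\diag(-1,-1,1))=-F(\Id)$, which forces $\max_{\SO(3)}F\ge|F(\Id)|\ge0$, this yields $\max_{\SO(3)}F=0$, so that every $R\in\orot$ is a critical point of $F$ on $\SO(3)$, i.e.\ $\int_S f\cdot RW\xprimo\,dx'=0$ for every skew matrix $W$. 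Moreover, by \Cref{teo:stability_false}, \ref{item:stability_KL} implies the compatibility condition \eqref{eq:compatibility}, that is $R^Tf\cdot e_3=0$ a.e.\ for every $R\in\orot$.

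Next I would fix an admissible quadruplet $(u,v,R,W)$ with $(u,v)\in\aisolin$ and, using that $(u,v)$ is a linearized isometry and $S$ is simply connected, invoke the approximation of linearized isometries by exact ones underlying the recovery sequences of \Cref{section:gamma_convergence} to obtain, for $t$ small, a family $z_t\in\aiso$ with $\|z_t-(\xprimo+t\,v e_3+t^2 u)\|_{W^{2,2}(S;\r^3)}=O(t^3)$. Since $\aiso$ and $E^{\KL}$ are invariant under composition with rigid motions, the maps $y_t:=R\,e^{tW}z_t$ belong to $\aiso$ and $E^{\KL}(y_t)=E^{\KL}(z_t)$; writing $\nu_t=\partial_1 z_t\land\partial_2 z_t=e_3-t\nabla' v+O(t^2)$ and $\nabla' z_t^T\nabla'\nu_t=-t(\nabla')^2 v+O(t^2)$, and using that $\bar Q$ is a quadratic form together with $E^{\VK}(u,v)=\tfrac1{24}\int_S\bar Q((\nabla')^2 v)\,dx'$ on $\aisolin$, one gets $E^{\KL}(y_t)=t^2 E^{\VK}(u,v)+o(t^2)$. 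For the load term, $\int_S f=0$ and the Taylor expansion $e^{tW}=\Id+tW+\tfrac12 t^2 W^2+O(t^3)$ give $\int_S f\cdot y_t\,dx'=F(R)+t\,A_1+t^2 A_2+O(t^3)$; by the first paragraph $F(R)=0$, and $A_1=\int_S f\cdot R(W\xprimo+v e_3)\,dx'=0$ because $\int_S f\cdot RW\xprimo\,dx'=0$ (criticality of $R$) and $\int_S f\cdot R v e_3\,dx'=\int_S v\,(R^Tf\cdot e_3)\,dx'=0$ (by \eqref{eq:compatibility}), while $A_2$ reproduces exactly the three load integrals in the definition of $J^{\VK}(u,v,R,W)$. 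Collecting terms gives $J^{\KL}(y_t)=t^2 J^{\VK}(u,v,R,W)+o(t^2)$.

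Since \ref{item:stability_KL} forces $J^{\KL}(y_t)\ge0$ for all small $t>0$, dividing by $t^2$ and letting $t\to0^+$ yields $J^{\VK}(u,v,R,W)\ge0$; as the quadruplet was arbitrary among those with $(u,v)\in\aisolin$, this is the first assertion. For the second, I would use the identity $J^{\VK}_\varepsilon(u,v,R,W)=\varepsilon\,E^{\VK}(u,v)+(1-\varepsilon)\,J^{\VK}(u,v,R,W)$, valid for $\varepsilon\in(0,1)$: both summands are non-negative (the second by what was just shown), so $J^{\VK}_\varepsilon\ge0$; and if $J^{\VK}_\varepsilon(u,v,R,W)=0$ then $E^{\VK}(u,v)=0$, i.e.\ $\int_S\bar Q((\nabla')^2 v)\,dx'=0$ on $\aisolin$, whence $(\nabla')^2 v=0$ a.e.\ by coercivity of $\bar Q$ on symmetric matrices, that is $v$ is affine. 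This is exactly \ref{item:stability_VK} for $J^{\VK}_\varepsilon$.

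I expect the one genuinely non-elementary ingredient to be the isometry family $z_t$: one needs an exact $W^{2,2}$ isometric embedding of $S$ matching the prescribed second-order profile $\xprimo+t\,v e_3+t^2 u$ with an $O(t^3)$ error in $W^{2,2}$, and enough control on the remainders to justify the quadratic expansion of $v\mapsto\int_S\bar Q((\nabla')^2 v)\,dx'$ along $z_t$ — for which a preliminary mollification of $v$, or the density of smooth linearized isometries in $\aisolin$ combined with the continuity of $J^{\VK}$, is convenient. The remaining steps — the expansions of $\nu_t$ and of the load term, and the convexity argument — are routine.
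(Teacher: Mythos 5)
Your overall strategy is the same as the paper's: compare $J^{\VK}$ with $J^{\KL}$ along a one-parameter family $\bar R e^{tW} z_t$ of exact isometric embeddings and let $t\to 0$. The preliminary observations ($F\le 0$ on $\SO(3)$, criticality $F(RW)=0$, and $R^Tf\cdot e_3=0$ via \Cref{teo:stability_false}) are all correct, and for the second assertion your identity $J^{\VK}_\varepsilon=\varepsilon\,E^{\VK}+(1-\varepsilon)\,J^{\VK}$ is a genuinely cleaner route than the case analysis on the sign of the load term that the paper uses.

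There is, however, a real gap in the key step: you assert the existence of a family $z_t\in\aiso$ with $\|z_t-(\xprimo+t\,v\,e_3+t^2 u)\|_{W^{2,2}}=O(t^3)$ for the \emph{prescribed} $(u,v)\in\aisolin$, and you identify the difficulty only as one of regularity (suggesting mollification of $v$). But regularity is not the obstruction. The in-plane component of an isometric embedding of the form $\xprimo + (t^2 u_t, t\,\tilde v)$ is determined by $\tilde v$ alone, up to an infinitesimal rigid motion: the constraint $\sym\nabla' u_t = -\tfrac12 \nabla'\tilde v\otimes\nabla'\tilde v + O(t^2)$ fixes $\sym\nabla'u_t$ but leaves the skew-affine part $Ax'+\eta$ free. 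Thus the $u_t$ produced by \cite[Theorem 7]{FRIESECKE2006} need not converge to your prescribed $u$, only to some $u+Ax'+\eta$ with $A\in\r^{2\times 2}_{\skw}$, and no choice of $z_t$ will give an $O(t^3)$ $W^{2,2}$-error against the prescribed profile in general. The argument survives because the load functional is invariant under this gauge: $\int_S f\cdot R\umatrix{Ax'+\eta}\,dx'=0$ by \eqref{eq:mean_condition_forces} and $F(R\iota(A))=0$. The paper makes this explicit, first proving that $u_\varepsilon\rightharpoonup u$ with $(u,\tilde v)\in\aisolin$, then invoking Korn's inequality to write $u=\bar u+Ax'+\eta+O(\delta)$ and absorbing the affine piece into the invariance of the load term. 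You need to insert exactly this affine-gauge argument; without it the claimed $O(t^3)$ approximation, as stated, is false.
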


In general the previous result does not hold when $ \varepsilon = 0 $. Indeed, one can only deduce the positivity of $ J^{\VK} $ but not the triviality of the minimizers.

The stability conditions are deeply linked to the attainment of the infimum of $J^{\VK}$. Indeed, we will prove the following.

\begin{teo} \label{teo:JVK_has_minimum}
	Suppose that the stability condition \ref{item:stability_VK} and the compatibility condition \eqref{eq:compatibility} hold true, and that $\dim \orot = 1$. Then $J^{\VK}$ attains its minimum over all admissible quadruplets $(u, v, R, W)$. Instead, if \ref{item:stability_VK} fails, then for every $\varepsilon > 0$ the infimum of the functional
	\begin{align}
		J^{\VK}_\varepsilon(u, v, R, W) & = E^{\VK}(u, v) - (1+\varepsilon)\int_S f \cdot R \umatrix{u} \, dx' \\
		& \hphantom{=} \, \, - (1+\varepsilon)\int_S f \cdot RW \lmatrix{v} \, dx' - (1+\varepsilon)\int_S f \cdot RW^2 \umatrix{x'}\, dx' \\
	\end{align}
	is $- \infty$.
\end{teo}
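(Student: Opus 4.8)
\emph{Part 2 (the case \ref{item:stability_VK} fails).} This is the more elementary half; I would argue by an explicit one-parameter construction. Failure of \ref{item:stability_VK} produces an admissible quadruplet $(u_0,v_0,R_0,W_0)$ with $(u_0,v_0)\in\aisolin$ such that either $J^{\VK}(u_0,v_0,R_0,W_0)<0$, or $J^{\VK}(u_0,v_0,R_0,W_0)=0$ with $v_0$ not affine. Writing $A_0$, $B_0$, $C_0$ for the three load integrals appearing in $J^{\VK}$, in both cases $A_0+B_0+C_0\ge E^{\VK}(u_0,v_0)$ and $A_0+B_0+C_0>0$: in the second case because on $\aisolin$ the energy equals $\frac1{24}\int_S\bar Q((\nabla')^2v_0)$, which is strictly positive when $v_0$ is not affine, by coercivity of $\bar Q$ on symmetric matrices. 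I would then evaluate $J^{\VK}_\varepsilon$ along the family $(\lambda^2u_0,\lambda v_0,R_0,\lambda W_0)$: the quadratic constraint defining $\aisolin$ and the subspace $\normal_{R_0}$ are invariant under this scaling, the membrane part of $E^{\VK}$ stays zero, and each of $E^{\VK}$, $A$, $B$, $C$ is homogeneous of degree $2$ in $\lambda$, so
\[
J^{\VK}_\varepsilon(\lambda^2u_0,\lambda v_0,R_0,\lambda W_0)=\lambda^2\big(E^{\VK}(u_0,v_0)-(1+\varepsilon)(A_0+B_0+C_0)\big),
\]
and the bracket equals $\big(E^{\VK}(u_0,v_0)-(A_0+B_0+C_0)\big)-\varepsilon(A_0+B_0+C_0)\le-\varepsilon(A_0+B_0+C_0)<0$; letting $\lambda\to\infty$ gives infimum $-\infty$.

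\emph{Part 1, lower bound.} For the existence statement I use the direct method, and the first task is $m:=\inf J^{\VK}>-\infty$. Fixing $R$ and $W$ and minimising first in $u$, the quadratic-in-$u$ functional $E^{\VK}(\cdot,v)-\int_S f\cdot R\umatrix{\cdot}$ is bounded below because $\int_S f=0$ together with the first-order optimality of $R\in\orot$ — namely that $R^TM$ is symmetric, where $M:=\int_S f\otimes\umatrix{x'}\,dx'$ — makes the linear term vanish on infinitesimal rigid motions; carrying out the Korn--Poincaré estimates, and using the coercivity of $\bar Q$ together with the second-order optimality of $R$ (which gives that $R^TM$ is positive semidefinite and, by the description of $\tang$ as the null set of $W\mapsto-\langle W^2,R^TM\rangle$ in \Cref{app:optimal_rotations}, makes this form coercive on $\normal_R$), everything is absorbed by Young's inequality except the contribution of the affine part of $v$ along the single direction on which $R^TM$ (a symmetric matrix annihilating $e_3$) degenerates. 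On that last piece the matching membrane-minimising $u$ turns $(u,v)$ into an element of $\aisolin$, so \ref{item:stability_VK} forces $J^{\VK}\ge0$ there; in fact it forces the corresponding component of the load to be an affine function of $x'$, so this direction contributes nothing to $J^{\VK}$ and the lower bound follows.

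\emph{Part 1, compactness and conclusion.} Given a minimising sequence $(u_n,v_n,R_n,W_n)$, compactness of $\orot\subset\SO(3)$ gives $R_n\to\bar R\in\orot$ after extraction. Normalising $u_n$ and $v_n$ by the exact symmetries $u\mapsto u+c+Bx'$ ($c$ constant, $B$ skew — a symmetry because $\int_S f=0$ and $R^TM$ is symmetric) and $v\mapsto v+c'$, and splitting $v_n$ into its affine part and a remainder $\hat v_n$ of vanishing mean and mean gradient, the bending term bounds $\hat v_n$ in $W^{2,2}$. Because $\dim\orot=1$, the symmetric matrix $R^TM$ has rank exactly one, so only a single scalar component of the affine part of $v_n$ is left uncontrolled by $E^{\VK}$; the complementary scalar, $W_n$, and — after subtracting the canonical membrane compensation associated with the affine part of $v_n$ — the normalised $u_n$ are all bounded, by the coercive quadratic structure isolated in the lower-bound step. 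The uncontrolled scalar, on the other hand, does not enter $J^{\VK}$ (here \ref{item:stability_VK} and the compatibility condition \eqref{eq:compatibility} are used), so it may be set to zero, producing a bounded minimising sequence. Passing to weak limits, $E^{\VK}$ is weakly lower semicontinuous — $\bar Q$ is convex, and $(\nabla')v_n\to(\nabla')\bar v$ strongly in every $L^p$, $p<\infty$ (compact Sobolev embedding in dimension two), so $(\nabla')v_n\otimes(\nabla')v_n\to(\nabla')\bar v\otimes(\nabla')\bar v$ strongly in $L^2$ — while the load terms converge by compact embeddings and by continuity of $R\mapsto\normal_R$ (which also yields $\bar W\in\normal_{\bar R}$); hence the limiting admissible quadruplet attains $m$.

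\emph{Main obstacle.} The crux is the compactness step: showing that, modulo the genuine symmetries of $J^{\VK}$, the affine degree of freedom of the out-of-plane displacement stays bounded along a minimising sequence. In the Dirichlet setting of \cite{LECUMBERRY2009} a boundary clamp removes all such freedoms at once; here one must instead isolate this freedom, show it is one-dimensional (the role of $\dim\orot=1$) and that $J^{\VK}$ is flat in it (the role of \ref{item:stability_VK} and \eqref{eq:compatibility}).
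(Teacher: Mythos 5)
Your Part 2 is correct and follows the paper's argument essentially verbatim: produce a defective admissible quadruplet from the failure of \ref{item:stability_VK}, observe that the total load term is strictly positive, scale $(u,v,W)\mapsto(\lambda^2u,\lambda v,\lambda W)$ and note the degree-two homogeneity. No issues there.

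For Part 1 your route — prove coercivity directly, rather than the paper's rescaling/blow-up contradiction — is a legitimate alternative, and you correctly identify the decisive structural facts: $R^T M$ is symmetric (first-order optimality), positive semidefinite with a zero third row and column (second-order optimality plus \eqref{eq:compatibility}), of rank exactly one (this is $a(R)b(R)-c^2(R)=0$, a consequence of $\dim\orot=1$, cf.\ \Cref{prop:condition_coefficients_abc}), and the two symmetry families $u\mapsto u+Bx'+c$, $v\mapsto v+c'$. What is missing is the proof of the two statements on which everything hinges. First, the sentence \enquote{On that last piece the matching membrane-minimising $u$ turns $(u,v)$ into an element of $\aisolin$, so \ref{item:stability_VK} forces $J^{\VK}\ge0$ there; in fact it forces the corresponding component of the load to be an affine function of $x'$, so this direction contributes nothing to $J^{\VK}$} is not an argument: $f$ is a datum and \ref{item:stability_VK} cannot change its form, and the fact that the degenerate affine direction in $v$ together with its canonical quadratic-in-$x'$ compensation in $u$ (and the cross term $\xi$) leaves $J^{\VK}$ invariant is a genuine theorem, not a consequence of stability. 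In the paper it is exactly the content of \Cref{lemma:affine_minimizers_zero_u}, \Cref{lemma:definition_xi} and \Cref{prop:invariance_JVK}, proved from the identification $\nabla'v\otimes\nabla'v=-(W^2)'$ with $W\in\tang_R$ and from $F(RW^2)=0$ on the tangent space, plus an additional differentiation along geodesics in $\orot$ for the cross term $\xi$. Second, the claim that \enquote{the complementary scalar, $W_n$, and \dots the normalised $u_n$ are all bounded, by the coercive quadratic structure isolated in the lower-bound step} is circular: the lower-bound step itself only asserted this coercivity. What actually establishes the positive-definiteness of the reduced quadratic form on the complement of the symmetry directions is the explicit computation that, for $(u,v)\in\aisolin$ affine, the load term reduces to $W_{12}^2(a+b)+W_{13}^2\frac{(a+b)^2}{2a}$ after solving $M\Lambda=B$ — this is \Cref{prop:structure_affine_minimizers}, and you would need to carry it out. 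So the proposal identifies the correct ingredients and the shape of the argument, but the two claims it rests on (the flatness of $J^{\VK}$ along $\mathcal V_R$ and its strict convexity transversally) are exactly the nontrivial part of the theorem and are left unproved.
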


As for \Cref{teo:implication_stability}, also \Cref{teo:JVK_has_minimum} might not hold for $ \varepsilon = 0 $ (see also \Cref{remark:eps0}). Roughly speaking, this would mean that the load $ f $ is critical, i.e., as soon as the load increases the Von K\'arm\'an model ceases to be valid.

\begin{remark}
	In \Cref{lemma:dimension_R} and \Cref{remark:dimension_R} it is proved that the dimension of $\orot$ is either zero or one. \Cref{teo:JVK_has_minimum} holds also in the case $\dim \orot = 0$. However, if $\orot$ is a singleton, our setting reduces to the one in \cite{LECUMBERRY2009}. For this reason, we will only give a sketch of the proof for the case $\dim \orot = 0$ (see \Cref{remark:dim_R_0}).
\end{remark}

To prove \Cref{teo:JVK_has_minimum} a careful analysis of the invariance of $J^{\VK}$ along affine perturbations will be needed.

\section{Stability alternative}\label{section:stability_alternative}

The aim of this section is to prove \Cref{teo:stability_false}--\ref{teo:implication_stability}.

In our arguments, we will often compare the quasi-minimizing sequence $ y_{h} $ (in the sense of \eqref{eq:quasi-minimizing}) with carefully chosen test deformations $ \hat y_{h} $.
Indeed, we have
\begin{equation} \label{eq:small_o_minimizing}
	J_h(y_h) - J_h(\hat y_h) = \inf_y J_h(y) - J_h(\hat y_h) + o(h^4) = o(h^4) \,.
\end{equation}
Passing to the limit in \eqref{eq:small_o_minimizing} we will deduce relevant properties of the quasi-minimizing sequence $ y_{h} $.
The test deformations $ \hat y_{h} $ we will use are inspired by the recovery sequence construction of \cite{FRIESECKE2006}.
For this reason, the interested reader can find the explicit computation of their elastic energy in \Cref{section:gamma_convergence} (\Cref{prop:fine_test_deformation,prop:fine_test_deformation_det}).

In order to prove \Cref{teo:stability_false} it is crucial to have at our disposal the following result, relating the energy scaling of $ y_{h} $ and the compatibility condition \eqref{eq:compatibility}.
Here, and in the rest of the section, we will denote by $(D_h) \subset \r^+$ an infinitesimal sequence.

\begin{teo}\label{teo:compatibility_forces}
	Suppose that $\limsup_{h \to 0} \frac{1}{D_h} E_h(y_h) \leq C$ with $\frac{D_h}{h^2} \to 0$.
    Then \eqref{eq:compatibility} is in force, i.e., $R^T f \cdot e_3 = 0$ for every $R \in \orot$.
\end{teo}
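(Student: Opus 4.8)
The plan is to exploit the quasi-minimality of $y_h$ by comparing it against a suitable test deformation $\hat y_h$ built from an optimal rotation $R \in \orot$ and a small ``tilt'' in the $e_3$-direction, and to extract from the bound $E_h(y_h) \leq C D_h$ enough control to pass to the limit. First I would record that, by the rigidity estimate of Friesecke--James--M\"uller (as recalled in \Cref{section:gamma_convergence}), the hypothesis $\frac{1}{D_h}E_h(y_h) \leq C$ produces, up to a subsequence, rotations $\bar R_h \in \SO(3)$ and constants $c_h$ with
\[
\|\nabla_h y_h - \bar R_h\|_{L^2(\Omega)}^2 \leq C E_h(y_h) \leq C D_h \,,
\]
so that $\tilde y_h := \bar R_h^T y_h + c_h$ satisfies $\|\nabla_h \tilde y_h - \Id\|_{L^2}^2 = O(D_h)$, and (after a further subsequence) $\bar R_h \to \bar R \in \SO(3)$. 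The work term rewrites as $\int_\Omega f_h\cdot y_h = h^2\int_\Omega f\cdot\bar R_h\tilde y_h - h^2\int_\Omega f\cdot\bar R_h c_h$; using $\int_S f\,dx' = 0$ the constant drops, and the smallness of $\nabla_h\tilde y_h - \Id$ gives $\tilde y_h \to \xprimo$ strongly in $W^{1,2}$, whence
\[
\frac{1}{h^2}\int_\Omega f_h\cdot y_h = \int_\Omega f\cdot\bar R_h\,\xprimo\,dx + o(1) = F(\bar R_h) + o(1) \le \maxl_{\SO(3)} F + o(1)\,.
\]

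Next I would test against $\hat y_h(x) = R\xprimo + h\,x_3\, R e_3 \cdot (\text{something})$ — more precisely, a deformation of the recovery-sequence type from \Cref{prop:fine_test_deformation} that realizes the optimal rotation $R\in\orot$ but with an $O(h)$ out-of-plane correction $h\, x_3\, g(x')$, chosen so that $\nabla_h\hat y_h$ is within $O(h)$ of $R$ in $L^\infty$, hence $E_h(\hat y_h) = O(h^2)$. Since $D_h = o(h^2)$, we have $E_h(y_h) = o(h^2) \leq o(E_h(\hat y_h))$ is \emph{not} quite what is needed; instead the comparison \eqref{eq:small_o_minimizing}-style inequality $J_h(y_h) \le J_h(\hat y_h) + o(D_h)$, rearranged, reads
\[
E_h(y_h) - h^2\!\int_\Omega \! f\cdot y_h \;\le\; E_h(\hat y_h) - h^2\!\int_\Omega\! f\cdot \hat y_h + o(D_h)\,.
\]
Here is the crux: the leading behaviour of $-h^2\int_\Omega f\cdot\hat y_h$ is $-h^2\big(F(R) + h\int_S f\cdot Re_3\, g\,dx' + O(h^2)\big)$, and the first-order term in $h^3$ is $-h^3 \int_S f\cdot Re_3\, g\,dx' = -h^3\int_S (R^T f\cdot e_3)\, g\,dx'$. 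If $R^T f\cdot e_3 \not\equiv 0$ for some $R\in\orot$, one can choose $g$ (constant is enough, using $\int_S f\,dx'=0$... no — here we precisely need $g$ \emph{non-constant}, aligned with the sign of $R^T f\cdot e_3$) so that $\int_S (R^Tf\cdot e_3) g\,dx' > 0$, forcing the right-hand side $\le h^2 F(R) + h^2 O(h^2) - c\, h^3 + o(D_h)$ with $c>0$. Combined with $E_h(y_h)\ge 0$ and the upper bound $\frac{1}{h^2}\int_\Omega f\cdot y_h \le \max_{\SO(3)} F + o(1) = F(R) + o(1)$ on the left, one gets $h^2 F(R) + o(h^2) \le h^2 F(R) + O(h^4) - c h^3 + o(D_h)$, i.e. $0 \le -c h^3 + o(h^2) $, which is no contradiction — so the argument must be sharpened: one must track the $o(h^2)$ on the left more carefully, showing that $\frac{1}{h^2}\int_\Omega f\cdot y_h = F(\bar R_h) + O(\sqrt{D_h}) = F(R) + O(\sqrt{D_h})$ and that the $h^3$ gain genuinely dominates, or — cleaner — deduce the contradiction at the scale $D_h$ itself by noting $E_h(\hat y_h)$ can in fact be made $O(D_h) + (\text{work gain})$ via a more delicate choice where the tilt amplitude is $\sqrt{D_h}/h$ rather than $O(1)$, so that the comparison is genuinely at order $D_h$.

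The main obstacle is exactly this matching of scales: the out-of-plane degree of freedom ``costs'' elastic energy quadratically in its amplitude but ``gains'' work linearly (through $R^Tf\cdot e_3$), so the test deformation must be tuned so that the gain beats the cost \emph{at the scale $D_h$}, which forces the tilt amplitude to be $\sim \sqrt{D_h}$ (so elastic cost $\sim D_h$) while the work gain is $\sim h\sqrt{D_h}$ — and since $D_h = o(h^2)$ means $h\sqrt{D_h} = o(h^2) \cdot \frac{h}{\sqrt{D_h}}$ may or may not beat $D_h$... precisely: $h\sqrt{D_h} \gg D_h \iff h \gg \sqrt{D_h} \iff h^2 \gg D_h$, which is our hypothesis. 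So with tilt amplitude $\delta_h := \sqrt{D_h}$, elastic cost $O(\delta_h^2) = O(D_h)$, and work gain of order $h^2\cdot h\delta_h = h^3\sqrt{D_h} \gg h^2 D_h \gg D_h$? We need the gain to dominate $D_h$: $h^3\sqrt{D_h} \gg D_h \iff h^3 \gg \sqrt{D_h} \iff h^6 \gg D_h$, which does \emph{not} follow from $D_h = o(h^2)$. The resolution — and what I would actually carry out — is to instead bound $E_h(y_h)$ from \emph{below} in terms of the distance of the limiting deformation from optimal rotations and show that if $R^Tf\cdot e_3\ne 0$ the work $\int f\cdot y_h$ cannot reach $\max F$ to within $o(h^2)$ unless $y_h$ develops $\Theta(h^2)$ elastic energy, contradicting $E_h(y_h) = o(h^2)$; concretely one projects $\bar R_h$ onto $\orot$, writes $\bar R_h = P(\bar R_h)(\Id + h\, W_h + o(h))$ with $W_h$ skew, expands $F(\bar R_h)$ to second order, and uses the non-degeneracy of $F$ normal to $\orot$ (from \Cref{app:optimal_rotations}) together with the fact that moving $\tilde y_h$ by the correction needed to cash in $R^Tf\cdot e_3 \ne 0$ necessarily excites the bending/stretching energy at order $h^2$. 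Passing to the limit $h\to 0$ then yields $R^Tf\cdot e_3 = 0$ for every $R \in \orot$, which is \eqref{eq:compatibility}. \qed
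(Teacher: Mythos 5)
Your proposal identifies the right overall framework — compare the quasi-minimizing sequence $y_h$ against test deformations built around an optimal rotation $R\in\orot$ — but it stalls at exactly the place where the paper's argument does the real work, and the fallback you sketch at the end does not close the gap.

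The key structural feature of the paper's proof that you do not reproduce is a three-way case split according to the limit of $D_h/h^4$. When $D_h\ll h^4$ (or $D_h\sim h^4$) the Von Kármán-style recovery deformation
\[
\hat y_h = R\begin{pmatrix}x'\\hx_3\end{pmatrix} + R\begin{pmatrix}-h^2x_3\nabla v^T\\ hv\end{pmatrix},
\]
with $E_h(\hat y_h)=O(h^4)$, already suffices: dividing the comparison by $h^3$, the parasitic term $h\sqrt{D_h}\int f\cdot R_h\binom{0}{v_h}$ either vanishes or contributes a fixed constant, and linearity in $v$ forces $R^Tf\cdot e_3=0$. The genuinely hard case is $h^4\ll D_h\ll h^2$. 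There the out-of-plane part of $y_h$ itself has amplitude of order $h^{-1}\sqrt{D_h}$, which dwarfs the $O(h)$ tilt that a VK-type recovery sequence can afford at cost $O(h^4)$; testing with the wrong scale only yields the useless sign information $\int \bar R^Tf\cdot e_3\,\bar v\ge 0$. To compete at the correct scale one must deform by $h^{-1}\sqrt{D_h}\,v$ in the normal direction, and a \emph{generic} such $v$ would cost $\sim(h^{-1}\sqrt{D_h})^4=D_h^2/h^4\gg D_h$ in stretching energy. The paper circumvents this by restricting to $v\in\adet$ (i.e. $\det(\nabla')^2 v=0$), so that \Cref{lemma:contruction_isometries} supplies an exact isometric embedding $\tilde y_h$ whose bent companion $\hat y_h=R\tilde y_h+hx_3R\nu_h$ has elastic energy $O(D_h)$ (\Cref{prop:fine_test_deformation_det}). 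Only then is the work gain $h\sqrt{D_h}\gg D_h$ (here $h^2\gg D_h$ is exactly the hypothesis), and the comparison shows that $\bar v$ maximizes the linear map $v\mapsto\int \bar R^Tf\cdot e_3\,v$ over the cone $\adet$; since $\adet$ is stable under $v\mapsto\lambda v$, the map must vanish on $\adet$, and the density of $\vspan\adet$ in $L^2(S)$ (\Cref{lemma:density_adet}, proved via a universal-approximation argument) yields $R^Tf\cdot e_3=0$.

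Two concrete problems in your write-up. First, your scale bookkeeping for the tilt amplitude is off: with an isometric tilt of amplitude $\delta$, the bending cost is $\sim h^2\delta^2$ and the work gain is $\sim h^2\delta$, so matching to the budget $D_h$ requires $\delta\sim h^{-1}\sqrt{D_h}$, giving gain $h\sqrt{D_h}$, and $h\sqrt{D_h}\gg D_h\iff h^2\gg D_h$ — precisely the hypothesis — not your $h^6\gg D_h$; the discrepancy comes from taking $\delta=\sqrt{D_h}$ and an extraneous factor of $h$ in the work term. Second, your proposed ``resolution'' (project $\bar R_h$ onto $\orot$, expand $F$ to second order, invoke nondegeneracy of $F$ normal to $\orot$) is a plausible-sounding sketch, but it never addresses the central obstruction — the incompatibility between generic out-of-plane test deformations and the energy budget $D_h\gg h^4$ — and you give no mechanism for converting nondegeneracy of $F$ into the pointwise vanishing of $R^Tf\cdot e_3$. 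Without the isometric construction of \Cref{lemma:contruction_isometries} and the density of $\vspan\adet$, the argument does not go through in the regime $h^4\ll D_h\ll h^2$.
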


\noindent For a quasi-minimizing sequence as in \eqref{eq:quasi-minimizing} we have that $ E_{h}(y_{h}) \leq Ch^{2} $ (see the proof of \Cref{teo:stability_false}).
Thus, \Cref{teo:compatibility_forces} equivalently ensures that the elastic energy of $ y_{h} $ scales like $ h^{2} $ when \eqref{eq:compatibility} does not hold true.

To prove \Cref{teo:compatibility_forces} we will proceed by steps, one for each possible limit of $ \frac{D_{h}}{h^{4}} $.
Every case corresponds to an elastic energy regime. 
In each step we will compare the quasi-minimizing sequence $ y_{h} $ with test deformations having the same elastic energy scaling.
In order to conclude when $ \frac{D_{h}}{h^{4}} \to \infty $ we will need some auxiliary results regarding linearized isometries (in the language of \cite{FRIESECKE2006}) that we state here, postponing their proof. 

\begin{lemma}\label{lemma:density_adet}
	Let
	\begin{equation}\label{eq:definition_adet}
		\adet= \{v \in W^{2,2}(S) \colon \det((\nabla')^2 v) = 0 \text{ a.e. in } S\} \,.	
	\end{equation}
	Then $\vspan \adet$ is dense in $L^2(S)$.
\end{lemma}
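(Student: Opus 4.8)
The plan is to produce, inside $\adet$, an explicit family of functions whose linear span is already dense in $L^2(S)$, namely the \emph{ridge} (plane-wave) functions. The key elementary observation is that for any direction $a\in\r^2$ and any $\phi\in C^2(\r)$, the function $v(x'):=\phi(a\cdot x')$ has Hessian $(\nabla')^2 v = \phi''(a\cdot x')\, a\otimes a$, which is a matrix of rank at most one, so $\det((\nabla')^2 v)=0$ everywhere in $S$; moreover, since $S$ is bounded, $v\in C^2(\overline S)\subset W^{2,2}(S)$, hence $v\in\adet$. Specializing to $\phi=\cos$ and $\phi=\sin$, the functions $c_\xi(x'):=\cos(\xi\cdot x')$ and $s_\xi(x'):=\sin(\xi\cdot x')$ belong to $\adet$ for every $\xi\in\r^2$, so $\vspan\adet \supseteq \vspan\{c_\xi, s_\xi : \xi\in\r^2\}$.

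It then remains to show that $V:=\overline{\vspan\{c_\xi, s_\xi : \xi\in\r^2\}}$, the closure taken in $L^2(S)$, is all of $L^2(S)$; I would do this by a standard Fourier-uniqueness argument. Suppose $g\in L^2(S)$ is orthogonal to $V$, and extend $g$ by zero to $\tilde g\in L^2(\r^2)$. Since $S$ is bounded, $\tilde g$ has compact support, hence also $\tilde g\in L^1(\r^2)$ and its Fourier transform $\widehat{\tilde g}$ is well defined and continuous. Orthogonality of $g$ to every $c_\xi$ and $s_\xi$ yields $\widehat{\tilde g}(\xi)=\int_{\r^2}\tilde g(x')\,e^{-i\xi\cdot x'}\,dx'=0$ for all $\xi\in\r^2$, and by injectivity of the Fourier transform on $L^1(\r^2)$ we conclude $\tilde g=0$ a.e., hence $g=0$ a.e. in $S$. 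Therefore $V=L^2(S)$, so $\vspan\adet$ is dense in $L^2(S)$.

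I do not expect a genuine obstacle here: the two ingredients (the rank-one Hessian computation, and the density of trigonometric ridge functions) are both routine, and in particular the argument uses nothing about $S$ beyond boundedness. If one prefers to avoid Fourier analysis, an equally short alternative is to note that the ridge monomials $(a\cdot x')^n$ ($a\in\r^2$, $n\ge 0$) all lie in $\adet$ and that their span is the entire space of polynomials in two variables (by polarization of $n$-homogeneous forms over $\r$), which is dense in $L^2(S)$ via the Stone--Weierstrass theorem on the compact set $\overline S$. Either route gives the claim.
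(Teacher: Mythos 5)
Your proof is correct, and it reaches the same structural insight as the paper — that ridge functions $\phi(a\cdot x')$ have Hessian $\phi''(a\cdot x')\,a\otimes a$ of rank at most one and hence lie in $\adet$ — but it takes a noticeably more direct route to density. The paper instead appeals to the Cybenko universal approximation theorem: it takes $\phi=\sigma$ a sigmoid, proves that $\sigma$ is \emph{discriminatory} (a signed Borel measure annihilating every $\sigma(y^Tx+\theta)$ must vanish), uses Hahn--Banach and Riesz representation to get density of $\vspan\{\sigma(y^Tx+\theta)\}$ in $C^0(\bar S)$, and only then passes to $L^2(S)$. Amusingly, the paper's discriminatory step itself ends by computing the Fourier transform $\hat\mu(\xi)=F_\xi(\cos)+iF_\xi(\sin)=0$, which is precisely the point where your argument starts: you simply observe that $\cos(\xi\cdot x')$ and $\sin(\xi\cdot x')$ are themselves ridge functions in $\adet$, so one can test a putative orthogonal element $g\in L^2(S)$ against them directly, concluding $\widehat{\tilde g}\equiv 0$ and hence $g=0$. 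Your version therefore bypasses the sigmoid machinery and the $C^0$ detour entirely, gives $L^2$ density (which is all the lemma asks for), and uses only boundedness of $S$. Your alternative polynomial route ($(a\cdot x')^n\in\adet$, polarization to recover all homogeneous polynomials, Stone--Weierstrass on $\bar S$) is equally valid. Either way, the proof is sound; the main difference from the paper is economy, not substance.
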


\begin{lemma}\label{lemma:contruction_isometries}
	Let $v \in W^{2, \infty}(S) \cap \adet$. Then there is $u \in W^{2, \infty}(S; \r^2)$ such that the map
	\[
		y(x') = \xprimo + \begin{pmatrix}
			u(x') \\
			v(x')
		\end{pmatrix}
	\]
	is an isometric embedding, i.e., $\nabla' y^T \nabla' y = \Id$ almost everywhere. Moreover, if $\|\nabla' v\|_{\infty} \leq \frac{1}{2}$, we have
	\begin{equation}\label{eq:regularity_u_isometric_embedding}
		\|u\|_{W^{2, \infty}} \leq C(\|(\nabla')^2 v\|_{\infty}\|\nabla' v\|_{\infty} + \|\nabla' v\|_{\infty}^2) \,.
	\end{equation}
\end{lemma}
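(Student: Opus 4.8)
\emph{Proof plan.} The idea is to turn the nonlinear constraint $\nabla'y^T\nabla'y=\Id$ into a scalar first-order PDE for a rotation angle and to recognize the hypothesis $\det(\nabla')^2v=0$ as exactly its integrability condition. Writing $y=\xprimo+\begin{pmatrix}u\\v\end{pmatrix}$ and $M:=\Id+\nabla'u$, one computes $\nabla'y^T\nabla'y=M^TM+\nabla'v\otimes\nabla'v$, so that $y$ is an isometric embedding if and only if $M^TM=G$, where $G:=\Id-\nabla'v\otimes\nabla'v$. We may assume $\|\nabla'v\|_\infty\le\tfrac12$ (the regime in which the lemma is applied), so $G$ is uniformly positive definite, with unique symmetric positive definite square root $S:=G^{1/2}$, a smooth function of $\nabla'v$ with $\|S-\Id\|\lesssim|\nabla'v|^2$. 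Every $M$ with $M^TM=G$ is of the form $M=QS$ with $Q$ valued in $\mathrm{O}(2)$; since $M-\Id=\nabla'u$ must be a gradient and we want $M$ close to $\Id$, it suffices to find $Q=e^{\theta J}$, with $J:=\begin{pmatrix}0&-1\\1&0\end{pmatrix}$ and $\theta\colon S\to\r$, such that $\mathrm{Curl}(QS)=0$ (i.e. $QS$ is a gradient). Then, $S$ being simply connected, $M=\nabla'\phi$ for some $\phi$, and $u:=\phi-x'$ (normalized so that $u(x_0)=0$ at a fixed $x_0\in S$) belongs to $W^{2,\infty}$ and works, since $\nabla'y^T\nabla'y=M^TM+\nabla'v\otimes\nabla'v=G+\nabla'v\otimes\nabla'v=\Id$.

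Next I would analyse the equation for $\theta$. Using that $J$ commutes with $Q$, a short computation rewrites $\mathrm{Curl}(e^{\theta J}S)=0$ as the gradient equation $\nabla'\theta=V$, where $V:=-(JSJ)^{-1}\mathrm{Curl}(S)$ is an $L^\infty$ vector field built algebraically from $\nabla'v$ and $(\nabla')^2v$ (note that $\mathrm{Curl}(S)$ is linear in $\nabla'S$, hence in $(\nabla')^2v$). On the simply connected set $S$ this is solvable with $\theta\in W^{1,\infty}$ if and only if $\mathrm{curl}\,V=0$ in $\mathcal{D}'(S)$. Geometrically, $V$ is in coordinates the Levi-Civita connection form of the metric $g_{ij}=\delta_{ij}-\partial_iv\,\partial_jv$, so for smooth $v$ one has $\mathrm{curl}\,V=K_g\sqrt{\det g}$; a direct computation of the Christoffel symbols, which here take the clean form $\Gamma^k_{ij}=-\partial_{ij}v\,\partial_kv/(1-|\nabla'v|^2)$, gives $K_g=-\det(\nabla')^2v/(1-|\nabla'v|^2)^2$, and hence $\mathrm{curl}\,V=-\det(\nabla')^2v/(1-|\nabla'v|^2)^{3/2}$. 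For merely $v\in W^{2,\infty}$, this identity persists in $\mathcal{D}'$ by mollification: $V_\varepsilon\to V$ in $L^2_{\mathrm{loc}}$, while $\det(\nabla')^2v_\varepsilon\to\det(\nabla')^2v=0$ in $\mathcal{D}'$ by weak continuity of $2\times2$ Jacobian minors (the divergence form of $\det\nabla(\nabla'v)$). Thus $\mathrm{curl}\,V=0$, and $\theta$, hence $u$, exist. This last step is the delicate point: $\mathrm{curl}\,V$ formally involves third derivatives of $v$, so the identity must be read distributionally and the merely almost-everywhere vanishing of its right-hand side must be carried through — which is exactly what the mollification and compensated-compactness argument above achieves.

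Finally, \eqref{eq:regularity_u_isometric_embedding} follows by bookkeeping along the construction, under $\|\nabla'v\|_\infty\le\tfrac12$. From $\|S-\Id\|\lesssim|\nabla'v|^2$ (smoothly) one gets $\|S-\Id\|_\infty\lesssim\|\nabla'v\|_\infty^2$ and $\|\nabla'S\|_\infty\lesssim\|\nabla'v\|_\infty\|(\nabla')^2v\|_\infty$, so $\|V\|_\infty\lesssim\|\nabla'v\|_\infty\|(\nabla')^2v\|_\infty$. With $\theta(x_0)=0$ and a Poincaré inequality on $S$ (whose constant may depend on $S$), $\|\theta\|_\infty\lesssim\|V\|_\infty$, whence $\|Q-\Id\|_\infty\lesssim\|\theta\|_\infty$ and $\|\nabla'Q\|_\infty\lesssim\|V\|_\infty$; combining, $M=QS$ obeys $\|M-\Id\|_\infty\lesssim\|\nabla'v\|_\infty^2+\|\nabla'v\|_\infty\|(\nabla')^2v\|_\infty$ and $\|\nabla'M\|_\infty\lesssim\|\nabla'v\|_\infty\|(\nabla')^2v\|_\infty$. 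Since $\nabla'u=M-\Id$ and $u(x_0)=0$, this is precisely \eqref{eq:regularity_u_isometric_embedding}.
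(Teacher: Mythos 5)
Your construction is essentially the one the paper uses, but you re-derive the existence part from first principles while the paper simply cites \cite[Theorem 7]{FRIESECKE2006} for existence and then only does the regularity bookkeeping. Concretely: both proofs write $M=\Id+\nabla' u$, reduce the isometry constraint to $M^{T}M=\Id-\nabla' v\otimes\nabla' v$, take the symmetric square root $S=\sqrt{\Id-\nabla' v\otimes\nabla' v}$, seek $M=e^{\theta J}S$ with $\nabla'\theta$ prescribed, and then estimate $\theta$, $S$, and $M$ in $L^\infty$ to get \eqref{eq:regularity_u_isometric_embedding}; your $V=-(JSJ)^{-1}\mathrm{Curl}(S)$ is exactly the paper's $h_F=\frac{1}{\det F}F^{T}\curl F$, since $JSJ=-\mathrm{cof}(S)$ and $\mathrm{cof}(S)^{-1}=\frac{1}{\det S}S$ for symmetric $2\times2$ matrices. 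What you add is the verification that the integrability condition $\curl V=0$ is exactly $\det(\nabla')^2 v=0$, together with a mollification argument to make that identity distributionally rigorous for $v$ merely in $W^{2,\infty}$; the paper outsources this entire point to \cite{FRIESECKE2006}. Your extra argument is correct (and as a small remark, you do not really need weak continuity of Jacobians: since $(\nabla')^2 v_\varepsilon\to(\nabla')^2 v$ strongly in $L^{p}_{\mathrm{loc}}$ for every $p<\infty$ and with a uniform $L^\infty$ bound, $\det((\nabla')^2 v_\varepsilon)\to\det((\nabla')^2 v)=0$ in $L^1_{\mathrm{loc}}$ by dominated convergence). One small point worth noting: your existence argument needs $\|\nabla' v\|_\infty<1$ to make $G$ positive definite, which you state you may assume because it is the regime of application; the paper sidesteps this by delegating existence to the cited theorem, although in practice the estimate is also only stated and used under the smallness hypothesis.
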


The first part of the section is thus devoted to the proof of \Cref{teo:compatibility_forces}.
Once \Cref{teo:compatibility_forces} is established, we will move to the proof of \Cref{teo:stability_false,teo:stability_alternative,teo:implication_stability}.

We start by proving that, if we are in the Von K\'arm\'an energy scaling, the quasi-minimizing sequence $ y_{h} $ will converge, up to a subsequence, to a rigid motion given by an optimal rotation. 

\begin{lemma} \label{prop:limit_optimal_rotation}
	Suppose that $E_h(y_h) \leq CD_h$ and $\frac{D_h}{h^2} \to 0$. Let $(R_h) \subset \SO(3)$ be the sequence provided by \Cref{prop:compactness_alpha_greater_2}. Then, up to a subsequence, $R_h \to \bar R \in \orot$.
\end{lemma}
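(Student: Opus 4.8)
The plan is to compare the quasi-minimizing sequence with rigid-motion competitors. In the present regime $E_h(y_h)\le CD_h$ with $D_h/h^2\to0$, the elastic energy is negligible on the scale $h^2$, so $J_h(y_h)$ equals, up to an $o(h^2)$ error, minus the work of $f$ along a rigid motion of $\Omega$; since this work is maximal precisely along the optimal rotations, quasi-minimality will push $R_h$ into $\orot$ in the limit.

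\emph{Step 1 (rigid reduction).} I would start from the constant rotations $R_h\in\SO(3)$ provided by \Cref{prop:compactness_alpha_greater_2}, which in this regime satisfy $\nabla_h y_h\to R_h$ in $L^2(\Omega;\r^{3\times3})$. Set $\tilde y_h:=R_h^T y_h$ and $\iota_h(x):=\xprimo+h x_3 e_3$, so that $\nabla_h\iota_h=\Id$ and $\|\nabla_h\tilde y_h-\Id\|_{L^2}\to0$. Since the ordinary gradient of $\tilde y_h-\iota_h$ is controlled entrywise by $|\nabla_h\tilde y_h-\Id|$ (the third column even carries an extra factor $h$), the Poincaré--Wirtinger inequality yields constant vectors $c_h\in\r^3$ such that $w_h:=\tilde y_h-\iota_h-c_h\to0$ in $L^2(\Omega;\r^3)$; consequently $y_h=R_h\iota_h+R_h c_h+R_h w_h$.

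\emph{Step 2 (expansion and comparison).} Because $f=f(x')$ does not depend on $x_3$ and $\int_I x_3\,dx_3=0$, we have $\int_\Omega f\cdot R_h\iota_h\,dx=\int_S f\cdot R_h\xprimo\,dx'=F(R_h)$; moreover $\int_\Omega f\cdot R_h c_h\,dx=\bigl(\int_S f\,dx'\bigr)\cdot R_h c_h=0$ by \eqref{eq:mean_condition_forces}, and $\int_\Omega f\cdot R_h w_h\,dx\to0$ since $\|w_h\|_{L^2}\to0$ and $|R_h|$ is bounded. Combining these with $0\le E_h(y_h)\le CD_h=o(h^2)$, we obtain
\[
    J_h(y_h)=E_h(y_h)-h^2\!\int_\Omega f\cdot y_h\,dx=-h^2 F(R_h)+o(h^2).
\]
On the other hand, fixing any $\bar R\in\orot$ and testing with the rigid motion $\hat y_h:=\bar R\,\iota_h$ gives $E_h(\hat y_h)=0$ and, by the same computation, $J_h(\hat y_h)=-h^2 F(\bar R)=-h^2\maxl_{R\in\SO(3)}F(R)$. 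Quasi-minimality \eqref{eq:quasi-minimizing} forces $J_h(y_h)\le J_h(\hat y_h)+o(h^4)$; after dividing by $h^2$ and rearranging this yields $\liminfl_{h\to0}F(R_h)\ge\maxl_{R\in\SO(3)}F(R)$, hence $F(R_h)\to\maxl_{R\in\SO(3)}F(R)$ since the reverse inequality is automatic.

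\emph{Step 3 (conclusion).} By compactness of $\SO(3)$, a subsequence of $(R_h)$ converges to some $\bar R\in\SO(3)$, and by continuity of $F$ the limit satisfies $F(\bar R)=\maxl_{R\in\SO(3)}F(R)$, i.e.\ $\bar R\in\orot$. I expect the only genuinely delicate point to be Step 1 --- invoking the compactness statement in the correct form and checking that the anisotropic rescaling does not spoil the Poincaré estimate --- but this is routine, since only the qualitative fact $w_h\to0$ in $L^2$ is needed (no rate), and everything else is bookkeeping with the mean-zero condition on $f$.
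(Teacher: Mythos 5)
Your proof is correct and follows essentially the same strategy as the paper: compare the quasi-minimizing sequence against a rigid-motion competitor $\hat y_h = R\,\iota_h$ to force $F(R_h)\to\max_{\SO(3)}F$, then conclude by compactness of $\SO(3)$ and continuity of $F$. The only cosmetic difference is in Step~1: where the paper expands $\int_\Omega f\cdot y_h$ directly in terms of the averaged displacements $u_h,v_h$ furnished by \Cref{prop:compactness_alpha_greater_2} (whose prefactors $\max\{\sqrt{D_h},D_h/h^2\}$ and $h^{-1}\sqrt{D_h}$ vanish), you first pass through a Poincar\'e--Wirtinger bound to get $w_h\to0$ in $L^2$; the two are interchangeable. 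One small hand-wave worth flagging: the statement of \Cref{prop:compactness_alpha_greater_2} does not literally assert $\|\nabla_h y_h - R_h\|_{L^2}\to0$ for its rotations $R_h$, so either justify this (it does follow, by comparing $R_h$ with the rotations $Q_h$ from \Cref{teo:rigidity_estimate} using the boundedness of $\nabla' u_h$) or, more economically, skip Step~1 altogether and bound $\int_\Omega f\cdot R_h w_h\,dx$ directly from the $W^{1,2}$-boundedness of $u_h$ and $v_h$ as in the paper.
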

\begin{proof}
	Up to a subsequence, $R_h \to \bar R$ for some $\bar R \in \SO(3)$. To prove that $\bar R \in \orot$ pick a rotation $R \in \SO(3)$ and consider the test deformation
	\begin{equation}\label{eq:test_deformation}
		\hat y_h(x', x_3) = R\begin{pmatrix}
			x' \\ hx_3
		\end{pmatrix}\,.
	\end{equation}
	The elastic energy of $\hat y_h$ is zero, so using the notation of \Cref{prop:compactness_alpha_greater_2} we have
	\begin{align}
		J_h(y_h) - J_h(\hat y_h) & \geq \int_\Omega f_h \cdot \hat y_h \, dx - \int_\Omega f_h \cdot y_h \, dx\\
		& = - h^2 \int_S f \cdot R_h \begin{pmatrix}
			\max\left\{\sqrt{D_h}, \dfrac{D_h}{h^2}\right\} u_h \\ h^{-1} \sqrt{D_h} v_h
		\end{pmatrix} \, dx' \\
		& \hphantom{=} \, \, + h^2 \int_S f \cdot (R - R_h) \xprimo \, dx' \,.
	\end{align}
    Here, we have used that $ f $ does not depend on $ x_{3} $ and the symmetry of $ (-\frac{1}{2}, \frac{1}{2}) $ to deduce that
    \begin{equation}
        \int_{\Omega} f \cdot R \begin{pmatrix}
            0 \\ x_{3}
        \end{pmatrix} \, dx = \int_{\Omega} f \cdot R_{h} \begin{pmatrix}
            0 \\ x_{3}
        \end{pmatrix} \, dx = 0.
    \end{equation}
	Dividing everything by $h^2$ and passing to the limit we deduce by \eqref{eq:small_o_minimizing} and \Cref{prop:compactness_alpha_greater_2}
	\[
		0 \geq  \int_S f \cdot (R - \bar R) \xprimo \, dx' \,,
	\]
	which gives $\bar R \in \orot$.
\end{proof}

\begin{proof}[Proof of \Cref{teo:compatibility_forces}]
    The proof is divided in three steps, one for each possible elastic energy scaling.

	\textbf{Step 1.} We start by considering the case where $D_hh^{-4} \to 0$.
    Let $ R \in \orot $ and consider the test deformation
    \begin{equation}\label{eq:fine_test_deformation}
		\hat y_h(x', x_3) = R \begin{pmatrix}
			x' \\ hx_3
		\end{pmatrix} + R \begin{pmatrix}
			- h^2x_3 \nabla v^T \\ hv
		\end{pmatrix} \,.
	\end{equation}
    By \Cref{prop:fine_test_deformation} we have that $ E_{h}(\hat y_{h}) = O(h^{4}) $.
    Comparing the quasi-minimizing sequence with the test deformations and using that $R \in \orot$ we get
	\begin{equation}
		\begin{aligned}
			J_h(y_h) - J_h(\hat y_h) & \geq O(h^4) + \int_\Omega f_h \cdot \hat y_h \, dx - \int_\Omega f_h \cdot y_h \, dx\\
			& = O(h^4) - \int_S f_h \cdot R_h \begin{pmatrix}
			\sqrt{D_h} u_h \\ h^{-1} \sqrt{D_h} v_h
			\end{pmatrix} \, dx' \\
			& \hphantom{=} \, \, + \int_S f_h \cdot (R - R_h) \xprimo \, dx' +\int_S f_h \cdot R \begin{pmatrix}
				0 \\ hv
			\end{pmatrix} \, dx' \\
			& \geq O(h^4) + h^3 \int_S f \cdot R \begin{pmatrix}
			0 \\ v
			\end{pmatrix} \, dx' \\
			& \hphantom{=} \, \, - h\sqrt{D_h} \int_S f \cdot R_h \begin{pmatrix}
			0 \\ v_h
			\end{pmatrix} \, dx' \,,
		\end{aligned}
		\label{eq:argument_comparison}
	\end{equation}
	where $v_h, u_h, R_h$ are the ones given by \Cref{prop:compactness_alpha_greater_2}.
    Dividing by $h^3$ and passing to the limit, by \eqref{eq:small_o_minimizing} and the fact that $ D_{h}h^{-4} \to 0 $ we deduce that
	\begin{equation}
		0 \geq \int_S f \cdot R \begin{pmatrix}
		0 \\ v
		\end{pmatrix} \, dx' \quad \forall \, R \in \orot\,, \, \, \forall \, v \in C^{\infty}(\bar S) \,,
	\end{equation}
	and by density the same holds for every $ v \in L^{2}(S) $. 
	Since the map
	\[
		v \in L^2(S) \mapsto \int_S R^T f \cdot \begin{pmatrix}
			0 \\ v
		\end{pmatrix} \, dx'
	\]
	is linear it must be identically zero, that is, $R^T f \cdot e_3 = 0$ for any $R \in \orot$. 
	
	\textbf{Step 2.} We move now to the case where $D_hh^{-4} \to D > 0$. Let $R \in \orot$ and $v \in C^\infty(\bar S)$ and consider again the test deformation $\hat{y}_h$ as in \eqref{eq:fine_test_deformation}. Arguing as in \eqref{eq:argument_comparison}, we deduce that
	\begin{align}
		J_h(y_h) - J_h(\hat y_h) \geq h^3 \int_S f \cdot R \begin{pmatrix}
		0 \\ v
		\end{pmatrix} \, dx' - h\sqrt{D_h} \int_S f \cdot R_h \begin{pmatrix}
		0 \\ v_h
		\end{pmatrix} \, dx' + O(h^4) \,,
	\end{align}
	where $v_h, u_h, R_h$ are the ones given by \Cref{prop:compactness_alpha_greater_2}. Let $\bar v$ be the limit of $v_h$ and $\bar R$ the limit of $R_h$. Dividing by $h^3$ and passing to the limit we deduce by \eqref{eq:small_o_minimizing} that
	\begin{equation}
	0 \geq \int_S f \cdot R \begin{pmatrix}
	0 \\ v
	\end{pmatrix} \, dx' - \sqrt{D}\int_S f \cdot \bar R \begin{pmatrix}
	0 \\ \bar v
	\end{pmatrix} \, dx' \quad \forall \, R \in \orot\,, \, \, \forall \, v \in C^{\infty}(\bar S) \,,
	\end{equation}
	and by density the same holds for every $ v \in L^{2}(S) $.
    Arguing as before, we conclude by linearity.
	
	\textbf{Step 3.} Finally, we discuss the case $D_hh^{-4} \to +\infty$.
    Let $v \in C^\infty(\bar S)$ such that $\det((\nabla')^2 v) = 0$ in $S$.
    By \Cref{lemma:contruction_isometries} there exists $\tilde u_h \in W^{2, \infty}(S; \r^2)$ such that
	\[
		\tilde y_h(x') = \xprimo + \begin{pmatrix}
			h^{-2}D_h \tilde u_h \\
			h^{-1}\sqrt{D_h}v
		\end{pmatrix} \,
	\]
	is an isometric embedding, i.e., $ \nabla' \tilde y_{h}^{T} \nabla' \tilde y_{h} = \Id $.
    Note that by \eqref{eq:regularity_u_isometric_embedding} and the fact that $ h^{-1}\sqrt{D_{h}} \to 0 $, we have $ \|\tilde u_{h}\|_{W^{2, \infty}} \leq C $. 
    Let $ R \in \orot $ and define 
	\begin{equation}\label{eq:fine_test_deformation_det}
		\hat y_h = R\tilde y_h + h x_3 R\nu_h \,,
	\end{equation}
	where $\nu_h  = \partial_1 \tilde y_h \land \partial_2 \tilde y_h$.
    By \Cref{prop:fine_test_deformation_det} we have $ E_{h}(\hat y_{h}) = O(D_{h}) $.
    Comparing the test deformation $\hat y_h$ with the minimizing sequence, using that $R \in \orot$, and that \eqref{eq:regularity_u_isometric_embedding} holds true, we get
	\begin{align}
		J_h(y_h) - J_h(\hat y_h) & \geq \int_\Omega f_h \cdot \hat y_h \, dx - \int_\Omega f_h \cdot y_h \, dx + O(D_h)\\
		& = - \int_S f_h \cdot R_h \begin{pmatrix}
		h^{- 2} D_h u_h \\ h^{-1} \sqrt{D_h} v_h
		\end{pmatrix} \, dx' + \int_S f_h \cdot (R - R_h) \xprimo \, dx' \\
		& \hphantom{\geq} \, \, +\int_S f_h \cdot R \begin{pmatrix}
		h^{- 2} D_h \tilde u_h \\ h^{-1} \sqrt{D_h} v
		\end{pmatrix} \, dx' + O(D_h) \\
		& \geq  h\sqrt{D_h} \int_S f \cdot R \begin{pmatrix}
		0 \\ v
		\end{pmatrix} \, dx' - h\sqrt{D_h} \int_S f \cdot R_h \begin{pmatrix}
		0 \\ v_h
		\end{pmatrix} \, dx' \\
		& \hphantom{=} \, \, + O(D_h)\,,
	\end{align}
	where $v_h, u_h, R_h$ are the ones given by \Cref{prop:compactness_alpha_greater_2}. Let $\bar v$ and $\bar R$ be the limits of $v_h$ and $R_h$, respectively. Dividing by $h\sqrt{D_h}$ and passing to the limit we obtain that for every $R \in \orot$ and for every $v \in C^\infty(\bar S)$ such that $\det((\nabla')^2 v) = 0$ we have
	\begin{equation}\label{eq:maximum_R_v_det}
		0 \geq \int_S f \cdot R \begin{pmatrix}
		0 \\ v
		\end{pmatrix} \, dx' - \int_S f \cdot \bar R \begin{pmatrix}
		0 \\ \bar v
		\end{pmatrix} \, dx' \,.
	\end{equation}
	Since $S$ satisfies condition \eqref{eq:boundary_condition}, the density result given by \cite[Theorem 1]{HORNUNG2011} ensures that \eqref{eq:maximum_R_v_det} actually holds for any $v \in \adet$ (see \eqref{eq:definition_adet} for the definition of $\adet$).
    By \Cref{prop:limit_optimal_rotation} $\bar R \in \orot$, hence, choosing $R = \bar R$ we have once again that $\bar v$ maximizes the linear map
	\[
	v \mapsto \int_S \bar R^T f \cdot \lmatrix{v} \, dx'
	\]
	on $\adet$. We note that $\adet$ is not a linear space. However, if $v \in \adet$, then $\lambda v \in \adet$ for any $\lambda \in \r$. Therefore, we conclude that
	\[
		\int_S \bar R^T f \cdot \lmatrix{v} \, dx' = 0 \quad \forall \, v \in \adet \,.
	\]
    Going back to \eqref{eq:maximum_R_v_det}, we deduce that
	\[
		0 \geq \int_S f \cdot R \begin{pmatrix}
		0 \\ v
		\end{pmatrix} \, dx' \quad \forall \, R \in \orot \,, \, \, \forall \, v \in \adet\,.
	\]
	Hence, by linearity the same holds for every $ v \in \overline{\vspan \adet} $, where the closure is taken in $ L^{2}(S) $.
    The conclusion follows from \Cref{lemma:density_adet}.
\end{proof}

We prove now the auxiliary results that we have used. 
\Cref{lemma:density_adet} follows from the well-known universal approximation theorem of single-layer neural networks. For the convenience of the reader, we will recall here the main ideas needed in our setting. We borrow the notation from \cite{CYBENKO1989}, but a similar result can also be found in \cite{CARROLL1989}.

\begin{proof}[Proof of \Cref{lemma:density_adet}]
	\textbf{Step 1}. Let $\sigma(x) = \dfrac{1}{1+e^{-x}}$. We will show that $\sigma$ is discriminatory, i.e., the only signed bounded regular Borel measure $\mu$ on $\bar S$ such that 
	\begin{equation}\label{eq:discriminatory}
	\int_{\bar S} \sigma(y^T x + \theta) \, d\mu(x) = 0 \quad \forall y \in \r^2, \, \, \forall \theta \in \r
	\end{equation}
	is $\mu = 0$. 
	
	Let $\mu$ be such that \eqref{eq:discriminatory} holds. We argue as in \cite[Lemma 1]{CYBENKO1989}. Let $y \in \r^2$, $\lambda, \theta, k \in \r$, and define
	\[
	\sigma_\lambda(x) = \sigma(\lambda(y^T x + \theta) + k) \,.
	\]
	Let
	\[
	\phi(x) = \begin{cases}
	1 \quad & \text{if } y^Tx + \theta > 0 \,,\\
	0 \quad & \text{if } y^Tx + \theta < 0 \,,\\
	\sigma(k) \quad & \text{if } y^Tx + \theta = 0 \,.
	\end{cases}
	\]
	Clearly, $\sigma_\lambda \to \phi$ pointwise as $\lambda \to +\infty$. Moreover, $\|\sigma_\lambda\|_{C^0} \leq 1$ uniformly. Hence, by \eqref{eq:discriminatory} and dominated convergence
	\begin{equation}\label{eq:condition_discrimination}
	0 = \int_{\bar S} \sigma_\lambda \, d\mu \to \int_{\bar S} \phi \, d\mu = \sigma(k)\mu(\Pi_{y, \theta}) + \mu(H_{y, \theta}) \quad \forall \, y \in \r^2 \, \, \forall \theta, k \in \r \,,
	\end{equation}
	where
	\begin{align}
	\Pi_{y, \theta} & = \{x \in S \colon y^T x + \theta= 0\}\,,\\
	H_{y, \theta} & = \{x \in S \colon y^T x+ \theta > 0\}\,.
	\end{align}
	Passing to the limit as $k \to +\infty$ in \eqref{eq:condition_discrimination}, we deduce that
	\begin{equation}\label{eq:condition_discrimination2}
	\mu(\Pi_{y, \theta}) + \mu(H_{y, \theta}) = 0 \quad \forall \, y \in \r^2 \, \, \forall \, \theta \in \r \,.
	\end{equation}
	Similarly, letting $k \to -\infty$ we get
	\begin{equation}\label{eq:condition_discrimination3}
	\mu(H_{y, \theta}) = 0 \quad \forall \, y \in \r^2 \, \, \forall \, \theta \in \r \,.
	\end{equation}
	Fix $y \in \r^2$ and define
	\[
	F_y(h) = \int_{\bar S} h(y^Tx) \, d\mu(x) \,,
	\]
	for every bounded Borel function $h: \r \to \r$. Let $\theta \in \r$. Then,
	\[
	F_y(\chi_{[-\theta, +\infty)}) = \mu(\Pi_{y,\theta}) + \mu(H_{y, \theta}) = 0 \,,
	\]
	where $\chi_{[-\theta, +\infty)}$ is the indicator function of ${[-\theta, +\infty)}$. Similarly, $F_y(\chi_{(-\theta, +\infty)}) = 0$. By the linearity of $F_y$, we deduce that $F_y$ is zero on the indicator function of every interval. By approximation, $F_y(h) = 0$ for every continuous and bounded function $h: \r \to \r$ and for every $y \in \r^2$. In particular,
	\begin{equation}
		\begin{aligned}
			\hat \mu(\xi) & = \int_{\bar S} e^{-i\xi^T x} \, d\mu(x) = \int_{\bar S} (\cos(\xi^T x) + i \sin(\xi^T x)) \, d\mu(x) \\
			& = F_\xi(\cos(x)) + iF_\xi(\sin(x)) = 0 \,, 
		\end{aligned}
	\end{equation}
	where $\hat \mu$ is the Fourier transform of $\mu$. Since $\hat \mu= 0$, it follows that $\mu = 0$.
	
	\textbf{Step 2}. Let
	\[
	\Sigma = \left\{x \mapsto \sigma(y^T x + \theta) \colon \theta \in \r\,, y \in \r^2 \right\} \,.
	\]
	We show that $\vspan{\Sigma}$ is dense in $C^0(\bar S)$ with respect to the $C^0$ norm. Suppose by contradiction that $R = \overline{\vspan \Sigma} \subsetneq C^0(\bar S)$. Then, by the Hahn-Banach Theorem, there is $L \in (C^0(\bar S))^\ast$ such that $L \neq 0$ and $L(R) = 0$. By the Riesz Representation Theorem, there is a signed bounded regular Borel measure $\mu \neq 0$ on $\bar S$ such that
	\[
	L(h) = \int_{\bar S} h \, d\mu \quad \forall \, h \in C^0(\bar S) \,.
	\]
	Since $\sigma$ is discriminatory by Step 1, we have the desired contradiction.
	
	\textbf{Step 3}. To conclude it is sufficient to show that $\Sigma \subset \adet$. Let $\theta \in \r$ and $y \in \r^2$. We have
	\[
	\nabla_x^2 \sigma(y^T x + \theta) = \sigma''(y^T x +\theta)\begin{pmatrix}
	y_1^2 & y_1 y_2\\
	y_1 y_2 & y_2^2
	\end{pmatrix} \,.
	\]
	Thus, $\det(\nabla_x^2 \sigma(y^T x + \theta)) = 0$, concluding the proof.
\end{proof}

Lastly, we move to the proof of \Cref{lemma:affine_minimizers_zero_u}.

\begin{proof}[Proof of \Cref{lemma:affine_minimizers_zero_u}]
	The existence of $u \in W^{2, 2}(S; \r^2)$ such that $y$ is an isometric embedding is proved in \cite[Theorem 7]{FRIESECKE2006}. We are left to show that $u \in W^{2, \infty}(S; \r^2)$ and \eqref{eq:regularity_u_isometric_embedding} holds. In order to do so, we need to analyse the construction of $u$. We borrow the notation from \cite[Theorem 7]{FRIESECKE2006}. Let
	\begin{align}
		F = \sqrt{\Id - \nabla' v \otimes \nabla' v} \,,
	\end{align}
	and
	\begin{equation}
		h_F = \frac{1}{\det(F)} F^T \curl(F) \,.
	\end{equation}
	Then, $u$ is defined as $u(x') = \phi(x') - x'$, where $\phi \in W^{2, 2}(S; \r^2)$ is such that $\nabla' \phi = e^{i \theta} F$ and $\theta \in W^{1, 1}(S)$ has zero mean and satisfies $\nabla' \theta = h_F$. Here, $e^{i \theta}$ stands for the rotation matrix of angle $\theta$:
	\[
		e^{i \theta} = \begin{pmatrix}
			\cos \theta & -\sin \theta\\
			\sin \theta & \cos \theta
		\end{pmatrix} \,.
	\]
	Note that
	\begin{equation}
		\det(F) = \sqrt{\det(\Id - \nabla' v \otimes \nabla' v)} = \sqrt{1 - |\nabla' v|^2} \geq \frac{1}{2} \,.
	\end{equation}
	It is well-known that the matrix square root is differentiable and Lipschitz on the set of matrices whose determinant is positive and bounded away from $0$. Thus, $F \in W^{1, \infty}(S; \r^{2 \times 2})$ and 
	\begin{align}
		\|F\|_{\infty} & \leq C\,,\\
		\|\nabla' F\|_{\infty} & \leq C \|(\nabla')^2 v\|_{\infty}\|\nabla' v\|_{\infty} \,.
	\end{align}
	It follows that $h_F \in L^\infty(S; \r^2)$ and $\|\nabla' \theta\|_{\infty} = \|h_F\|_{\infty} \leq C\|(\nabla')^2 v\|_{\infty}\|\nabla' v\|_{\infty}$. Hence, we have
	\begin{align}
		\|(\nabla')^2 u\|_{\infty} & = \|(\nabla')^2 \phi\|_{\infty} \leq C(\|\nabla' \theta\|_{\infty}\|F\|_{\infty} + \|\nabla' F\|_{\infty}) \leq C \|(\nabla')^2 v\|_{\infty}\|\nabla' v\|_{\infty} \,,\\
		\|\nabla' u\|_{\infty} & = \|\nabla' \phi - \Id\|_{\infty} \leq C\|F - \Id\|_{\infty} + \|e^{i \theta} - \Id\|_{\infty} \\
		& \leq C\|F - \Id\|_{\infty} + \|\theta\|_{\infty} \leq C(\||\nabla' v|^2\|_{\infty} + \|\nabla' \theta\|_{\infty})\\
		& \leq C(\|(\nabla')^2 v\|_{\infty}\|\nabla' v\|_{\infty} + \|\nabla' v\|_{\infty}^2)\,,
	\end{align}
	where we have used the Poincaré--Wirtinger inequality on the term $\|\theta\|_{\infty}$ and a Taylor expansion of the matrix square root to treat the term $F - \Id$ (recall that the matrix square root has bounded derivative). Since $u$ is defined up to translation, we conclude by applying the Poincaré--Wirtinger inequality.
\end{proof}

The rest of the section is devoted to the proof of \Cref{teo:stability_false}--\ref{teo:implication_stability}.

\begin{proof}[Proof of \Cref{teo:stability_false}]
    Firstly, observe that $J_h(y_h) \leq Ch^2$. Indeed, using the test deformation \eqref{eq:test_deformation} we have
	\[
		\inf_y J_h(y) \leq J_h(\hat y_h) = h^2 \int_S f \cdot R \xprimo \, dx' \leq Ch^2 \,.
	\]
	By \Cref{teo:rigidity_estimate} there is a constant rotation $\tilde R_h \in \SO(3)$ such that
	\begin{equation}\label{eq:rigidity_deformation}
	\|\nabla_h y_h - \tilde R_h\|_{L^2}^2 \leq Ch^{-2}E_h(y_h) \,.
	\end{equation}
	We now define
	\[
		\tilde y_h = \tilde R^T_h y_h - \begin{pmatrix}
			x' \\ hx_3
		\end{pmatrix} + c_h
	\]
	where $c_h$ is chosen so that $\tilde y_h$ has zero average. By Poincaré--Wirtinger inequality, one obtains a bound from above on the elastic energy as follows
	\begin{align}
	E_h(y_h) & = J_h(y_h) + \int_\Omega f_h \cdot y_h \, dx\\
	& \leq Ch^2 + h^2 \int_\Omega f \cdot \tilde R_h \tilde y_h \, dx + h^2 \int_S f \cdot \xprimo \, dx' \\
	& \leq Ch^2 + Ch^2\|\nabla_h y_h - \tilde R_h\|_{L^2} \leq Ch^2 +Ch(E_h(y_h))^{\frac{1}{2}} \,.
	\end{align}
	Thus, by a simple application of the Young inequality, we get $E_h(y_h) \leq Ch^2$. Assume now that $R^T f \cdot e_3 \neq 0$ for some $R \in \orot$. It follows that 
	\[
		\liminf_{h \to 0} \frac{E_h(y_h)}{h^2} = e > 0 \,,
	\]
	otherwise, defining $D_h = E_h(y_h)$, by \Cref{teo:compatibility_forces} we would conclude that $R^T f \cdot e_3 = 0$ for every optimal rotation $R \in \orot$, contradicting the assumption.
    By \Cref{prop:compactness_alpha_2} there exists $\bar y \in \aiso$ such that, up to a subsequence, $y_h \to \bar y$ in $W^{1, 2}(\Omega; \r^3)$ and $\nabla_h y_h \to (\nabla' \bar y, \nu)$, where $\nu= \partial_1 \bar y \land \partial_2 \bar y$.
    By a standard $ \Gamma $-convergence argument, being $ y_{h} $ quasi-minimizing, we deduce that
    \begin{equation}
        \frac{1}{h^{2}} J_{h}(y_{h}) \to J^{\KL}(\bar y) = \inf J^{\KL} .
    \end{equation}
    In particular, since the loading term is continuous, we deduce by the $ \Gamma $-convergence of $ \frac{1}{h^{2}}E_{h} $ that
    \begin{equation}
        \frac{1}{h^{2}}E_{h}(y_{h}) \to E^{\KL}(\bar y) = e > 0.
    \end{equation}
    This implies that $\bar y \neq R \xprimo$ for every $R \in \SO(3)$, so condition \ref{item:stability_KL} is not satisfied. 
\end{proof}

We move now to the proof of \Cref{teo:stability_alternative}.

\begin{proof}[Proof of \Cref{teo:stability_alternative}]
	The proof follows the steps of \cite[Theorem 4]{LECUMBERRY2009}.
    Arguing as in the proof of \Cref{teo:stability_false} we get $E_h(y_h) \leq Ch^2$.
	
	\textbf{Step 1}. Firstly, suppose by contradiction that $h^{-2} E_h(y_h) \to e > 0$. In this case we can argue as in the proof of \Cref{teo:stability_false} to deduce that $y_h \to \bar y$ in $W^{1,2}(S; \r^3)$, $E^{\KL}(\bar y) = e > 0$ and $J^{\KL}(\bar y) = 0$ contradicting the stability condition \ref{item:stability_KL}. 
	
	\textbf{Step 2}. Suppose now that $h^{-2} E_h(y_h) \to 0$ and $h^{-4} E_h(y_h) \to +\infty$. We will show that this gives a contradiction. Set $D_h = E_h(y_h)$ and apply \Cref{prop:compactness_alpha_greater_2} to construct the sequences $R_h, u_h$ and $v_h$ and their corresponding limits. By \Cref{prop:limit_optimal_rotation} $R_h \to \bar R \in \orot$ thus, at least for $h \ll 1$, the projection $P(R_h)$ of $R_h$ onto $\orot$ is well-defined. Define $d_h = \dist_{\SO(3)}(R_h, \orot)$ (see \eqref{eq:definition_intrinsic_distance} for the definition of $\dist_{\SO(3)}$). Let $W_h \in \normal_{P(R_h)}$ be such that $|W_h| = 1$ and $R_h = P(R_h)e^{d_h W_h}$. Recall that $\normal_{P(R_h)}$ is the normal space to $\orot$ at the point $P(R_h)$ (see \eqref{eq:definition_normal} for the definition). Clearly, up to a subsequence, $W_h \to \bar W_1$ with $|\bar W_1| = 1$. Moreover, by the definition of $\normal$ it is easy to prove that $\bar W_1 \in \normal_{\bar R}$.
	
	We now show that $d_h = O(h^{-1}\sqrt{D_h})$. Let $v \in C^\infty(\bar S)$ with $\det((\nabla')^2v) = 0$ in $S$ and $\tilde u_h \in W^{2, \infty}(S)$ given by \Cref{lemma:contruction_isometries} so that the map
	\[
		\tilde y_h(x') = \xprimo + \begin{pmatrix}
		h^{-2}D_h \tilde u_h \\
		h^{-1}\sqrt{D_h}v
		\end{pmatrix} \,
	\]
	is an isometric embedding.
    Note that, since $ h^{-1} \sqrt{D_{h}} \to 0 $, we have the uniform bound $ \|\tilde u_{h}\|_{W^{2, \infty}} \leq C $.
    Consider the test deformation 
    \begin{equation}
		\hat y_h = P(R_{h})\tilde y_h + h x_3 P(R_{h})\nu_h \,,
	\end{equation}
    By \Cref{prop:fine_test_deformation_det} we have $ E_{h}(\hat y_{h}) = O(D_{h}) $.
    Thus, 
	\begin{equation} \label{eq:bound_dh_det_1}
		\begin{aligned}
		& J_h(y_h) - J_h(\hat y_h) \geq \int_\Omega f_h \cdot \hat y_h \, dx - \int_\Omega f_h \cdot y_h \, dx + O(D_h) \\
		& \quad = - \int_S f \cdot R_h \begin{pmatrix}
		D_h u_h \\ h \sqrt{D_h} v_h
		\end{pmatrix} \, dx' + h^2 \int_S f \cdot (P(R_h) - R_h) \xprimo \, dx' \\
		& \quad \hphantom{\geq} \, \, +\int_S f \cdot P(R_h) \begin{pmatrix}
		D_h \tilde u_h \\ h \sqrt{D_h} v
		\end{pmatrix} \, dx' + O(D_h) \,.
		\end{aligned}
	\end{equation}
	As showed in \eqref{eq:condition_maximality}, we have that
	\begin{equation}
	\int_S f \cdot P(R_h) W \xprimo \, dx' = 0 \quad \forall \, W \in \rskew \,. \label{eq:zero_skew}
	\end{equation}
	Expanding the exponential $e^{d_h W_h}$, recalling that by \Cref{teo:compatibility_forces} we have $P(R_h)^T f \cdot e_3 = 0$, and using both \eqref{eq:regularity_u_isometric_embedding} and  \eqref{eq:zero_skew}, we get from \eqref{eq:bound_dh_det_1}
	\begin{equation} \label{eq:bound_dh_det}
	\begin{aligned}
	& J_h(y_h) - J_h(\hat y_h) \geq - h^2 d_h^2 \int_S f \cdot P(R_h)W_h^2 \xprimo \, dx' \\
	& \quad \hphantom{=} \, \, + h\sqrt{D_h} \int_S f \cdot (P(R_h) - R_h) \lmatrix{v_h} \, dx' + O(D_h, h^2 d_h^3) \\
	& \quad \geq - h^2 d_h^2 \int_S f \cdot P(R_h)W_h^2 \xprimo \, dx' \\
	& \quad \hphantom{=} \, \, - hd_h\sqrt{D_h} \int_S f \cdot P(R_h)W_h \lmatrix{v_h} \, dx' + O(D_h, h^2 d_h^3, h \sqrt{D_h}d_h^2)\,.
	\end{aligned}
	\end{equation}
	Suppose by contradiction that $\frac{h d_h}{\sqrt{D_h}} \to +\infty$. Then dividing \eqref{eq:bound_dh_det} by $h^2 d_h^2$ we get
	\begin{align}
		& \frac{1}{h^2d_h^2}(J_h(y_h) - J_h(\hat y_h)) \geq - \int_S f \cdot P(R_h) W_h^2 \xprimo \, dx' \\
		& \quad  - \frac{\sqrt{D_h}}{hd_h} \int_S f \cdot P(R_h) W_h \lmatrix{v_h} \, dx' + O\left(d_h, \frac{D_h}{h^2d_h^2}, \frac{\sqrt{D_h}}{h}\right)\,. \label{eq:almost_contradiction_dh_det}
	\end{align}
	Note that, by \eqref{eq:small_o_minimizing} we have
	\[
		\limsup_{h \to 0} \frac{1}{h^2 d_h^2}(J_h(y_h) - J_h(\hat y_h))= \limsup_{h \to 0} \frac{D_h}{h^2 d_h^2} \dfrac{h^4}{D_h} \frac{1}{h^4} (J_h(y_h) - J_h(\hat y_h)) \leq 0 \,.
	\]
	Passing to the limit in \eqref{eq:almost_contradiction_dh_det} we deduce that
	\[
		0 \geq - \int_S f \cdot \bar R \bar W_1^2 \xprimo \, dx' > 0 \,,
	\]
	where the last inequality follows from the fact that $\bar W_1 \in \normal_{\bar R}$ and $\bar W_1 \neq 0$ (see \eqref{eq:definition_normal}). This gives a contradiction and proves that $d_h = O(h^{-1} \sqrt{D_h})$. 

	To conclude the proof of Step 2 we show now that condition \ref{item:stability_VK} is violated, getting a contradiction. Set
	\[
		\bar W = \lim_{h \to 0} \frac{h}{\sqrt{D_h}} d_h W_h \,.
	\]
	Since $\bar W_1 \in \normal_{\bar R}$ we have $\bar W \in \normal_{\bar R}$. We have that
	\begin{equation}
		\begin{aligned}
			\dfrac{1}{D_h}(J_h(y_h) + h^2F(P(R_h))) & = \dfrac{1}{D_h}E_h(y_h) - \frac{h^2}{D_h} \int_\Omega f \cdot y_h \, dx' \\
			& \hphantom{=} \, \,  + \frac{h^2}{D_h}\int_S f \cdot P(R_h) \xprimo \, dx'\\
			& = \dfrac{1}{D_h}E_h(y_h) + \frac{h^2}{D_h}\int_S f \cdot (P(R_h) - R_h) \xprimo \, dx'\\
			& \hphantom{=} \, \, - \frac{h^2}{D_h}\int_S f \cdot R_h \begin{pmatrix}
				h^{-2}D_h u_h \\ h^{-1}\sqrt{D_h} v_h
			\end{pmatrix} \, dx' \,.
		\end{aligned}
	\end{equation}
	Expanding twice the exponential map, recalling that $P(R_h)^T f \cdot e_3 = 0$, and by \eqref{eq:zero_skew} we get 
	\begin{equation}
		\begin{aligned}
			& \dfrac{1}{D_h}(J_h(y_h) + h^2F(P(R_h))) =   \dfrac{1}{D_h}E_h(y_h) \\
			& \quad \hphantom{=} \, \, - \frac{h^2d_h^2}{D_h} \int_S f \cdot P(R_h)W_h^2 \xprimo \, dx' - \int_S f \cdot R_h \umatrix{u_h} \, dx' \\
			& \quad \hphantom{=} \, \,  + \frac{h}{\sqrt{D_h}} \int_S f \cdot (P(R_h) - R_h) \lmatrix{v_h} \, dx' + O\left(\dfrac{h^2d_h^3}{D_h}\right)\\
			& \quad = \dfrac{1}{D_h}E_h(y_h) - \frac{h^2d_h^2}{D_h} \int_S f \cdot P(R_h)W_h^2 \xprimo \, dx' - \int_S f \cdot R_h \umatrix{u_h} \, dx'\\
			& \quad \hphantom{=} \, \, - \frac{hd_h}{\sqrt{D_h}} \int_S f \cdot P(R_h)W_h \lmatrix{v_h} \, dx' + O\left(\dfrac{h^2d_h^3}{D_h}, \dfrac{hd_h^2}{\sqrt{D_h}}\right) \,.
		\end{aligned}
	\end{equation}
	We denote by $\bar v$ and $\bar u$ the limits of $v_h$ and $u_h$, respectively. Note that by \Cref{prop:compactness_alpha_greater_2}, $(\bar u, \bar v) \in \aisolin$. Since by definition $\frac{1}{D_h}E_h(y_h) \to 1$, passing to the limit we get by \Cref{teo:gamma_convergence_alpha_greater_2}--\ref{item:liminf_inequality}
	\begin{equation}\label{eq:contradiction_det}
		\begin{aligned}
			& \liminf_{h \to 0} \dfrac{1}{D_h}(J_h(y_h) + h^2 F(P(R_h)))  \\
			& \quad = 1 - \int_S f \cdot \bar R \umatrix{\bar u} \, dx' - \int_S f \cdot \bar R \bar W \lmatrix{\bar v} \, dx' \\
			& \quad \hphantom{=} \, \, - \int_S f \cdot \bar R \bar W^2 \xprimo \, dx' \geq J^{\VK}(\bar u, \bar v, \bar R, \bar W) \geq 0 \,,
		\end{aligned}
	\end{equation}
	where the last inequality follows from \ref{item:stability_VK}. 
	
	Let $\hat y_h$ be the test deformation in \eqref{eq:fine_test_deformation} with $v \in C^\infty(\bar S)$ and $R \in \orot$.
    By \Cref{prop:fine_test_deformation}, we have that $ E_{h}(\hat y_{h}) = O(h^{4}) $, hence 
	\begin{align}
		\dfrac{1}{D_h} (J_h(\hat y_h) + h^2F(P(R_h))) & \leq - \frac{h^3}{D_h} \int_S f \cdot R \lmatrix{v} \, dx' + O\left(\frac{h^4}{D_h}\right) \\
		& = O\left(\frac{h^4}{D_h}\right) \to 0 \,,
	\end{align}
	where we used that $F(P(R_h)) = F(R)$ for every $R \in \orot$ and that $R^T f \cdot e_3 = 0$. In particular, by the quasi-minimizing property of $y_h$
	\[
		\limsup_{h \to 0} \dfrac{1}{D_h}(J_h(y_h) + h^2 F(P(R_h))) \leq \limsup_{h \to 0} \dfrac{1}{D_h}(J_h(y_h) - J_h(\hat y_h)) = 0 \,.
	\]
	Hence, all the inequalities in \eqref{eq:contradiction_det} are in fact equalities, and we have both $J^{\VK}(\bar u, \bar v, \bar R, \bar W) = 0$ and $E^{\VK}(\bar u, \bar v) = 1$. Since $(\bar u, \bar v) \in \aisolin$, this contradicts \ref{item:stability_VK}.
	 
	\textbf{Step 3}. By the previous steps, we obtain that $E_h(y_h) \leq Ch^{4}$.
    We now apply \Cref{prop:compactness_alpha_greater_2}  with $D_h = h^4$ to construct the sequences $R_h, u_h, v_h$.
    Define $d_h$ and $W_h$ as in Step 2.
    We prove now that $d_h = O(h)$.
    The argument is similar to the one already seen.
    Consider the test deformation \eqref{eq:fine_test_deformation} with $v \in C^\infty(\bar S)$ and $R = P(R_h)$.
    By \Cref{prop:fine_test_deformation} we have $E_h(\hat y_h) = O(h^4)$, thus, expanding the exponential and recalling that $F(RW) = 0$ for every $R \in \orot$ and $W \in \rskew$
	\begin{equation} \label{eq:bound_dh}
		\begin{aligned}
		& J_h(y_h) - J_h(\hat y_h) \geq \int_\Omega f_h \cdot \hat y_h \, dx - \int_\Omega f_h \cdot y_h \, dx + O(h^4)\\
		& \quad = - h^2 \int_S f \cdot R_h \begin{pmatrix}
		h^2 u_h \\ hv_h
		\end{pmatrix} \, dx' + h^2 \int_S f \cdot (P(R_h) - R_h) \xprimo \, dx'\\
		& \quad \hphantom{\geq} \, \, + h^2 \int_S f \cdot P(R_h) \lmatrix{hv} \, dx' + O(h^4) \\
		& \quad \geq - h^2 d_h^2 \int_S f \cdot P(R_h)W_h^2 \xprimo \, dx' \\
		& \quad \hphantom{=} \, \, + h^3 \int_S f \cdot (P(R_h) - R_h) \lmatrix{v_h} \, dx' + O(h^4, h^2 d_h^3) \\
		& \quad \geq - h^2 d_h^2 \int_S f \cdot P(R_h)W_h^2 \xprimo \, dx' \\
		& \quad \hphantom{=} \, \, - h^3d_h \int_S f \cdot P(R_h)W_h \lmatrix{v_h} \, dx' + O(h^4, h^2 d_h^3, h^3d_h^2)\,.
		\end{aligned}
	\end{equation}
	Suppose by contradiction that $\frac{d_h}{h} \to +\infty$. Then, dividing \eqref{eq:bound_dh} by $h^2d_h^2$ and passing to the limit we deduce that
	\[
		0 \geq \int_Sf \cdot \bar R \bar W_1^2 \xprimo \, dx' > 0 \,,
	\]
	where the last inequality follows from the fact that $0 \neq \bar W_1 \in \normal_{\bar R}$. This provides the desired contradiction. 
	
	Define as before
	\[
		\bar W = \lim_{h \to 0} \frac{d_h}{h} W_h \,.
	\]
	Finally, expanding the exponential map
	\begin{equation}
		\begin{aligned}
			&\dfrac{1}{h^4}(J_h(y_h) + h^2 F(P(R_h))) =  \\
			& \quad = \dfrac{1}{h^4}E_h(y_h) - \frac{1}{h^2} \int_\Omega f \cdot y_h \, dx' + \frac{1}{h^2}\int_S f \cdot P(R_h) \xprimo \, dx'\\
			& \quad = \dfrac{1}{h^4}E_h(y_h) + \frac{1}{h^2}\int_S f \cdot (P(R_h) - R_h) \xprimo \, dx'  - \frac{1}{h^2}\int_S f \cdot R_h \begin{pmatrix}
			h^2 u_h \\ h v_h
			\end{pmatrix} \, dx'\\
			& \quad =  \dfrac{1}{h^4}E_h(y_h) - \frac{d_h^2}{h^2} \int_S f \cdot P(R_h)W_h^2 \xprimo \, dx' - \int_S f \cdot R_h \umatrix{u_h} \, dx'\\
			& \quad \hphantom{=} \, \,  + \frac{1}{h} \int_S f \cdot (P(R_h) - R_h) \lmatrix{v_h} \, dx' + O\left(\frac{d_h^3}{h^2}\right) \\
			& \quad = \dfrac{1}{h^4}E_h(y_h) - \frac{d_h^2}{h^2} \int_S f \cdot P(R_h)W_h^2 \xprimo \, dx'\\
			& \quad \hphantom{=} \, \, - \int_S f \cdot R_h \umatrix{u_h} \, dx' - \frac{d_h}{h} \int_S f \cdot P(R_h)W_h \lmatrix{v_h} \, dx' + O\left(\frac{d_h^3}{h^2}, \frac{d_h^2}{h}\right) \,,
		\end{aligned}
	\end{equation}
	so that
	\begin{equation}
		\liminf_{h \to 0} \dfrac{1}{h^4}(J_h(y_h) + h^2F(P(R_h))) \geq J^{\VK}(\bar u, \bar v, \bar R, \bar W) \,.
	\end{equation}
	
	Let $(u, v, R, W)$ be an admissible quadruplet. Construct a recovery sequence $(\tilde y_h)$ for $u$ and $v$ as in \Cref{teo:gamma_convergence_alpha_greater_2}--\ref{item:recovery_sequence_alpha_4}. The sequences of rescaled displacements for the recovery sequence, defined as in \eqref{eq:definition_u}--\eqref{eq:definition_v}, will be denoted by $\tilde u_h$ and $\tilde v_h$. We have
	\begin{equation}
		\begin{aligned}
			J^{\VK}(\bar u, \bar v, \bar R, \bar W) & \leq \liminf_{h \to 0} \dfrac{1}{h^4}(J_h(y_h) + h^2 F(P(R_h))) \\
			& \leq \limsup_{h \to 0} \dfrac{1}{h^4}(\inf_y J_h(y) + h^2 F(P(R_h))) \\
			& \leq \limsup_{h \to 0} \dfrac{1}{h^4}(J_h(Re^{hW}\tilde y_h) + h^2 F(R)) \,.
		\end{aligned}
	\end{equation}
	To conclude it is sufficient to prove that 
	\[
		\limsup_{h \to 0} \dfrac{1}{h^4}(J_h(Re^{hW}\tilde y_h) + F(R)) = J^{\VK}(u, v, R, W) \,.
	\]
	Expanding the expression of $J_h$ we have
	\begin{equation}
		\begin{aligned}
			& \dfrac{1}{h^4}(J_h(Re^{hW}\tilde y_h) + h^2 F(R)) = \dfrac{1}{h^4}E_h(Re^{hW}\tilde y_h) - \dfrac{1}{h^2} \int_\Omega f \cdot Re^{hW} \tilde y_h \, dx \\
			& \, \, \, \hphantom{=} \, \, + \dfrac{1}{h^2} \int_S f \cdot R \xprimo \, dx' \\
			& \, \, \, = \dfrac{1}{h^4}E_h(\tilde y_h) - \dfrac{1}{h^2} \int_S f \cdot Re^{hW} \begin{pmatrix}
				h^2 \tilde u_h \\ h\tilde v_h
			\end{pmatrix} \, dx' + \dfrac{1}{h^2} \int_S f \cdot (R - Re^{hW}) \xprimo \, dx'\\
			& \, \, \, = \dfrac{1}{h^4}E_h(\tilde y_h) - \int_S f \cdot Re^{hW} \umatrix{\tilde u_h} \, dx' - \int_S f \cdot RW \lmatrix{\tilde v_h} \, dx' \\
			& \, \, \, \hphantom{=} \, \, - \int_S f \cdot RW^2 \xprimo \, dx' +O(h) \to J^{\VK}(u, v, R, W) \,,
		\end{aligned}
	\end{equation}
	concluding the proof of the minimality.
\end{proof}

We conclude the section by proving \Cref{teo:implication_stability}.

\begin{proof}[Proof of \Cref{teo:implication_stability}]
	Suppose by contradiction that there exists an admissible quadruplet $(\bar u,\bar v,\bar R,\bar W)$ such that $(\bar u,\bar v) \in \aisolin$ and $J^{\VK}(\bar u,\bar v,\bar R,\bar W) < 0$. Let $\delta > 0$ and $\tilde v \in C^\infty(\bar S)$ such that $\|\bar v- \tilde v\|_{W^{2, 2}} \leq \delta$. Let $1 \gg \varepsilon > 0$. By \cite[Theorem 7]{FRIESECKE2006}, there is $u_\varepsilon \in W^{2, 2}(S; \r^2)$ such that
	\[
		y_\varepsilon(x') = \begin{pmatrix}
			x' + \varepsilon^2 u_\varepsilon \\ \varepsilon \tilde v
		\end{pmatrix}
	\]
	is an isometric embedding and
	\begin{equation}\label{eq:uepsilon_estimate}
		\|u_\varepsilon\|_{W^{2
		, 2}} \leq C\left(\|\nabla' \tilde v\|_{L^\infty}\|(\nabla')^2 \tilde v\|_{L^2} + \|\nabla' \tilde v\|_{L^2}^2\right) \,.
	\end{equation}
	It follows that along a non-relabelled subsequence $u_\varepsilon \todeb u$ in $W^{2, 2}(S; \r^2)$ for some $u \in W^{2, 2}(S; \r^2)$. Moreover, since $\nabla' y_\varepsilon^T \nabla' y_\varepsilon = \Id$, we have
	\[
		0 = \varepsilon^2 \left(\nabla' u_\varepsilon^T + \nabla' u_\varepsilon + \nabla' \tilde v \otimes \nabla' \tilde v\right) + o(\varepsilon^2) \,,
	\]
	where $o(\varepsilon^2)$ has to be intended in the $L^2$ sense. Dividing by $\varepsilon^2$ and passing to the limit we deduce that $(u, \tilde v) \in \aisolin$. Moreover,
	\begin{equation}
		\begin{aligned}
			 \sym(\nabla' u - \nabla' \bar u) & = 2 (\nabla' \tilde v \otimes \nabla' \tilde v - \nabla' \bar v \otimes \nabla' \bar v) \\
			 & = 2 (\nabla' (\tilde v - \bar v) \otimes \nabla' \tilde v - \nabla' \bar v \otimes \nabla' (\bar v - \tilde v)) \,.
		\end{aligned}
	\end{equation}
	Hence, by Korn's inequality, there exists $A \in \r^{2 \times 2}_{\skw}$ and $\eta \in \r^{2}$ such that
	\begin{equation}\label{eq:estimate_u_korn}
		\|u - \bar u - Ax' - \eta\|_{L^2} \leq C\delta \,.
	\end{equation}
	Consider the deformation
	\[
		\bar y_\varepsilon(x') = \bar R e^{\varepsilon \bar W} y_\varepsilon \in \aiso \,.
	\]
	We have
	\[
		\nabla' \bar y_\varepsilon = \bar R e^{\varepsilon \bar W}\left( \begin{pmatrix}
			e_1 & e_2
		\end{pmatrix} + \begin{pmatrix}
			\varepsilon^2 \nabla' u_\varepsilon \\
			\varepsilon \nabla' \tilde v
		\end{pmatrix} \right)
	\]
	and
	\[
		\nu_\varepsilon = \partial_1 \bar y_\varepsilon \land \partial_2 \bar y_\varepsilon = \bar R e^{\varepsilon \bar W} \left(e_3 - \varepsilon \begin{pmatrix}
		\nabla' \tilde v^T \\ 0
		\end{pmatrix} \right) +O(\varepsilon^2) \,.
	\]
	It follows that
	\[
		\nabla' \nu_\varepsilon = - \varepsilon\bar R e^{\varepsilon\bar W} \begin{pmatrix}
            (\nabla')^2 \tilde v \\
			0 
		\end{pmatrix} + O(\varepsilon^2)
	\]
	and
	\[
		(\nabla' \bar y_\varepsilon)^T \nabla' \nu_\varepsilon = - \varepsilon (\nabla')^{2} \tilde v + O(\varepsilon^2) \,.
	\]
	Thus, by condition \ref{item:stability_KL},
	\begin{equation}
		\begin{aligned}
			0 & \leq J^{\KL}(\bar y_\varepsilon) = \int_S \bar Q((\nabla' \bar y_\varepsilon)^T \nabla' \nu_\varepsilon) \, dx' - \int_S f \cdot \bar y_\varepsilon \, dx' \\
			& = \varepsilon^2 \int_S \bar Q((\nabla')^2 \tilde v) \, dx' - \int_S f \cdot \bar R e^{\varepsilon \bar W} \xprimo \, dx' - \varepsilon \int_S f \cdot \bar R e^{\varepsilon\bar W} \lmatrix{\tilde v} \, dx' \\
			& \hphantom{=} \, \, - \varepsilon^2 \int_S f \cdot \bar R e^{\varepsilon \bar W} \umatrix{u_\varepsilon} \, dx' +o(\varepsilon^2) \,.
		\end{aligned}
	\end{equation}
	By \Cref{teo:stability_false} we have $(\bar R)^T f \cdot e_3 = 0$. Expanding the exponential around the identity and recalling that $F(\bar RW) = 0$ for every $W \in \rskew$, we get
	\begin{equation}
		\begin{aligned}
			0 & \leq J^{\KL}(\bar y_\varepsilon) \leq \varepsilon^2 \int_S \bar Q((\nabla')^2 \tilde v) \, dx'  - F(\bar R) - \varepsilon^2 \int_S f \cdot \bar R (\bar W)^2 \xprimo \, dx' \\
			& \hphantom{=} \, \, - \varepsilon^2 \int_S f \cdot \bar R \bar W \lmatrix{\tilde v} \, dx' - \varepsilon^2\int_S f \cdot \bar R\umatrix{u_\varepsilon} \, dx'+ o(\varepsilon^2) \,.
		\end{aligned}
	\end{equation}
	Dividing by $\varepsilon^2$ and using the fact that $F(\bar R) \geq 0$ by \Cref{lemma:sign_F}, passing to the limit we deduce that $0 \leq J^{\VK}(u, \tilde v, \bar R, \bar W)$. Hence, by definition of $\tilde v$ and \eqref{eq:estimate_u_korn} we get
	\begin{equation}
		\begin{aligned}
			0 & \leq J^{\VK}(\bar u,\bar v,\bar R,\bar W) + \int_S f \cdot \bar R \umatrix{Ax' + \eta} \, dx' + C\delta \\
			& = J^{\VK}(\bar u,\bar v,\bar R,\bar W) + C\delta \,,
		\end{aligned}
	\end{equation}
	where in the last equality we have used \eqref{eq:mean_condition_forces} and the fact that $F(\bar RW) = 0$ for every $W \in \rskew$. Since $\delta$ is arbitrary we reach a contradiction. 
	
	We now prove that \ref{item:stability_VK} holds for $J^{\VK}_\varepsilon$. Suppose that there is an admissible quadruplet $(\bar u,\bar v,\bar R,\bar W)$ such that $(\bar u,\bar v) \in \aisolin$ and $J^{\VK}_\varepsilon(\bar u,\bar v,\bar R,\bar W) \leq 0$ for some $\varepsilon > 0$. We will show that $\bar v$ is affine. Let
	\[
		K = \int_S f \cdot\bar R \umatrix{\bar u} \, dx' + \int_S f \cdot\bar R \bar W \lmatrix{\bar v} \, dx' + \int_S\bar R(\bar W)^2 \xprimo \, dx' \,.
	\]
	If $K \leq 0$, since
	\[
		0 \geq J^{\VK}_\varepsilon(\bar u,\bar v,\bar R,\bar W) = E^{\VK}(\bar u,\bar v) - (1-\varepsilon)K \geq E^{\VK}(\bar u,\bar v) \,,
	\]
	we get that $E^{\VK}(\bar u,\bar v) = 0$, thus, $\bar v$ is affine. Conversely, if $K > 0$ we deduce that
	\[
		J^{\VK}(\bar u,\bar v,\bar R,\bar W) = J^{\VK}_\varepsilon(\bar u,\bar v,\bar R,\bar W) - \varepsilon K < 0 \,,
	\]
	which gives a contradiction.
\end{proof}

\section{Attainment of the infimum of \texorpdfstring{$\bm{J^{\VK}}$}{JVK}}\label{section:minimum}

In this last section, we will prove \Cref{teo:JVK_has_minimum}. The stability condition \ref{item:stability_VK} assures that all configurations in $\aisolin$ with zero total energy have zero Von K\'arm\'an elastic energy, i.e., $v$ is affine. However, we do not expect that all affine functions have zero total energy, unless $f = 0$. In the following series of results, we study the specific structure of such affine minimizers. We recall that we assume $f$ not to be identically zero.
Given an optimal rotation $ R \in \orot $, in the following results we will often use the coefficients $ a(R) $, $ b(R) $, and $ c(R) $ as defined in \eqref{eq:definition_a}--\eqref{eq:definition_c}.

\begin{prop}\label{prop:structure_affine_minimizers}
	Suppose that \ref{item:stability_VK} holds, $R^T f \cdot e_3 = 0$ for every $R \in \orot$, and $\dim \orot = 1$. Let $(u, v, R, W)$ be an admissible quadruplet such that $(u, v) \in \aisolin$ and $J^{\VK}(u, v, R, W) = 0$. Then $W = 0$ and there are $\lambda, \delta \in \r$, $\eta \in \r^2$, and $A \in \r^{2 \times 2}_{\skw}$ such that, if $a(R) > 0$, then
	\begin{align}
		v(x') & = - \lambda \dfrac{c(R)}{a(R)} x_1 + \lambda x_2 + \delta \,, \\
		u(x') & = -\frac{\lambda^2}{2}\begin{pmatrix}
			\dfrac{b(R)}{a(R)} x_1 - \dfrac{c(R)}{a(R)} x_2\\
			- \dfrac{c(R)}{a(R)} x_1 + x_2
		\end{pmatrix} + A x' + \eta \,,
	\end{align}
	whereas, if $b(R) > 0$, then
	\begin{align}
		v(x') & = \lambda x_1 - \lambda \dfrac{c(R)}{b(R)} x_2 + \delta \,, \\
		u(x') & = -\frac{\lambda^2}{2}\begin{pmatrix}
		x_1 - \dfrac{c(R)}{b(R)} x_2\\
		- \dfrac{c(R)}{b(R)} x_1 + \dfrac{a(R)}{b(R)} x_2
		\end{pmatrix} + A x' + \eta \,.
	\end{align}
\end{prop}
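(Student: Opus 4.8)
The plan is to use condition \ref{item:stability_VK} to reduce $(u,v)$ to an explicit affine pair, and then to extract $W=0$ and the precise form of $v$, $u$ from the vanishing of the load part of $J^{\VK}$. Since $(u,v)\in\aisolin$ and $J^{\VK}(u,v,R,W)=0$, condition \ref{item:stability_VK} yields that $v$ is affine; write $v(x')=p\cdot x'+\delta$ with $p=\nabla'v\in\r^2$ and $\delta\in\r$. Then $(\nabla')^2 v=0$, hence $E^{\VK}(u,v)=0$, and the linearized isometry relation reads $\sym\nabla'u=-\tfrac12\,p\otimes p$, a constant symmetric matrix; since $\nabla'u$ is curl free its skew part is also constant, so $u(x')=-\tfrac12(p\otimes p)x'+Ax'+\eta$ for some $A\in\r^{2\times 2}_{\skw}$ and $\eta\in\r^2$. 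This already produces the parameters $\delta$, $\eta$, $A$ and (a component of) $p$; it remains to prove $W=0$ and $p\in\ker M(R)$, where $M(R):=\left(\begin{smallmatrix}a(R)&c(R)\\ c(R)&b(R)\end{smallmatrix}\right)$ is the symmetric matrix assembled from \eqref{eq:definition_a}--\eqref{eq:definition_c}.

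Plugging $E^{\VK}(u,v)=0$ into $J^{\VK}(u,v,R,W)=0$ leaves the identity $\int_S f\cdot R\umatrix{u}\,dx'+\int_S f\cdot RW\lmatrix{v}\,dx'+\int_S f\cdot RW^2\umatrix{x'}\,dx'=0$. I would evaluate the three integrals in turn using: (i) $\int_S f\,dx'=0$ from \eqref{eq:mean_condition_forces}, which annihilates the contributions of $\delta$ and $\eta$; (ii) first-order optimality of $R$, i.e.\ $F(R\widehat W)=0$ for every skew $\widehat W$ (as in \eqref{eq:zero_skew}), which annihilates the term $Ax'$ inside $u$; (iii) the compatibility hypothesis $R^Tf\cdot e_3=0$, which forces $R^T g_i\in\r^2\times\{0\}$ for $g_i:=\int_S x_i f\,dx'$, and also $We_3\in\r^2\times\{0\}$ for every skew $W$. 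Since $R^Tg_1,R^Tg_2$ are exactly the columns that assemble into $M(R)$, and writing $\omega=(\omega',\omega_3)$ for the axial vector of $W$ and using $W^2=\omega\omega^T-|\omega|^2\Id$, the identity reduces to a scalar quadratic relation of the schematic shape
\[
	-\tfrac12\,p^TM(R)p+(J\omega')\cdot M(R)p+\omega'^TM(R)\omega'-\big(a(R)+b(R)\big)\big(|\omega'|^2+\omega_3^2\big)=0\,,
\]
where $J=\left(\begin{smallmatrix}0&1\\ -1&0\end{smallmatrix}\right)$ and $a(R)+b(R)=\Tr M(R)$.

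At this point the hypothesis $\dim\orot=1$ enters through the fine structure of $\orot$ (see \Cref{app:optimal_rotations}, cf.\ \Cref{lemma:dimension_R}): $M(R)$ is positive semidefinite of rank one, so $a(R)b(R)=c(R)^2$, $a(R)+b(R)>0$ (hence $a(R)>0$ or $b(R)>0$), $\ker M(R)$ is one dimensional, and membership $W\in\normal_R$ translates exactly into $\omega'\in\ker M(R)$, with $\omega_3$ free. Writing $M(R)=nn^T$ with $0\neq n\in\r^2$ (so $a(R)=n_1^2$, $b(R)=n_2^2$, $c(R)=n_1n_2$), and using $M(R)\omega'=0$ together with $\omega'=\nu\,(-n_2,n_1)$, the quadratic relation becomes the sum of squares $-\tfrac12\big(n\cdot p-|n|^2\nu\big)^2-\tfrac12|n|^4\nu^2-|n|^2\omega_3^2=0$. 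As $|n|^2>0$, each summand must vanish: $\omega_3=0$, $\nu=0$ (so $\omega'=0$, whence $W=0$), and $n\cdot p=0$, i.e.\ $p\in\ker M(R)$. Finally, if $a(R)>0$ then $\ker M(R)=\r\,(-c(R)/a(R),1)$, so $p=\lambda\,(-c(R)/a(R),1)$ for some $\lambda\in\r$, which is the claimed expression for $v$; moreover $p\otimes p=\lambda^2\left(\begin{smallmatrix}c(R)^2/a(R)^2&-c(R)/a(R)\\ -c(R)/a(R)&1\end{smallmatrix}\right)=\lambda^2\left(\begin{smallmatrix}b(R)/a(R)&-c(R)/a(R)\\ -c(R)/a(R)&1\end{smallmatrix}\right)$ after using $c(R)^2=a(R)b(R)$, which yields the claimed expression for $u$. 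The case $b(R)>0$ is entirely symmetric.

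The main obstacle is the bookkeeping of the middle step: producing the sum-of-squares form requires simultaneously invoking first-order optimality, the compatibility condition $R^Tf\cdot e_3=0$, the rank-one structure of $M(R)$, and the identification of $\normal_R$ with $\ker M(R)$; keeping straight the identities relating a skew matrix $W$, its axial vector $\omega$, the vector $We_3$, and $W^2$ is where sign and index errors are easiest to commit.
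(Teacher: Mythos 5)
Your proposal is correct, and the skeleton is the same as the paper's: use \ref{item:stability_VK} once to force $v$ affine and $u$ of the form $-\tfrac12(p\otimes p)x'+Ax'+\eta$, normalize away $A,\eta,\delta$ using \eqref{eq:mean_condition_forces} and $F(RW)=0$ for skew $W$, expand the remaining scalar identity, and reduce to a nonnegative quadratic that must vanish. The genuine difference is in the middle of the computation. The paper invokes \ref{item:stability_VK} a \emph{second} time to deduce that $\Lambda=(\lambda_1,\lambda_2)$ minimizes $\Lambda\mapsto\tfrac12\Lambda^TM\Lambda-B\cdot\Lambda-K(W)$, extracts the first-order condition $M\Lambda=B$, and only after substituting recognizes the remainder as a sum of squares in $W_{12},W_{13}$. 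You instead write the rank-one matrix as $M=nn^T$ (which is equivalent to $a,b\ge0$, $a+b>0$, $ab=c^2$, i.e. \Cref{lemma:sign_F}, \Cref{lemma:a_b_positive}, \Cref{prop:condition_coefficients_abc}), parametrize $W$ by its axial vector $\omega$, use $\normal_R=\{\omega'\in\ker M\}$ from \Cref{cor:structure_normal}, and complete the square once to get $-\tfrac12(n\cdot p-|n|^2\nu)^2-\tfrac12|n|^4\nu^2-|n|^2\omega_3^2=0$. That makes $W=0$ and $n\cdot p=0$ drop out simultaneously without re-invoking \ref{item:stability_VK}, which is a small but real simplification; I checked the sign conventions ($w'=(We_3)'=J\omega'$, $W^2=\omega\omega^T-|\omega|^2\Id$, $\Tr((W^2)'M)=\omega'^TM\omega'-|\omega|^2(a+b)$) and the resulting sum of squares matches the paper's $J_{\min}=W_{12}^2(a+b)+W_{13}^2\tfrac{(a+b)^2}{2a}$ after the substitution $\omega_3=-W_{12}$, $\nu\sqrt{a}=W_{13}$, $n\cdot p=\nu|n|^2$. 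Both routes buy you the same conclusion; yours is slightly more self-contained and the $M=nn^T$ parametrization is easier to audit, at the price of carrying the axial-vector dictionary (which, as you note, is where the sign errors lurk).
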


\begin{proof}
	The stability condition \ref{item:stability_VK} implies that $v = \lambda_1 x_1 + \lambda_2 x_2 + \delta$. Since $(u, v) \in \aisolin$, we deduce that
	\[
		u(x') = -\frac{1}{2} \begin{pmatrix}
			\lambda_1^2 & \lambda_1 \lambda_2\\
			\lambda_1 \lambda_2 & \lambda_2^2
		\end{pmatrix}\begin{pmatrix}
			x_1 \\ x_2
		\end{pmatrix} + Ax' + \eta \,,
	\] 
	for some $\eta \in \r^2$ and $A \in \r^{2 \times 2}_{\skw}$. Now for any $\bar A \in \r^{2 \times 2}_{\skw}$, $\bar \eta \in \r^2$, and $\bar \delta \in \r$ we have 
	\[
		J^{\VK}(u + \bar Ax' + \bar \eta, v + \bar \delta, R, W) = J^{\VK}(u, v, R, W) \,.
	\]
	This follows from assumption \eqref{eq:mean_condition_forces} and the fact that $F(RW) = 0$ for any $W \in \rskew$. In particular, we can suppose $A, \delta$, and $\eta$ to be zero.

	Suppose $a(R) \neq 0$ (the proof for the case $b(R) \neq 0$ is analogous). We will write $a, b, c$ in place of $a(R), b(R), c(R)$ in order to streamline the exposition. By \Cref{cor:structure_normal} in this case $W$ is of the form
	\[
		W = \begin{pmatrix}
			0 & W_{12} & W_{13}\\
			-W_{12} & 0 & \dfrac{c}{a}W_{13}\\
			-W_{13} & -\dfrac{c}{a}W_{13} & 0
		\end{pmatrix} \,.
	\]
	Let us define $K(W) = F(RW^2)$ and $J_{\min} = J^{\VK}(u, v, R, W)$. With some simple expansion (recall that $ab - c^2 = 0$ by \Cref{prop:condition_coefficients_abc}, since $f \neq 0$) we have
	\begin{equation}
		\begin{aligned}
			J_{\min} & = \dfrac{1}{2} \int_S f \cdot R \begin{pmatrix}
				\lambda_1^2 x_1 + \lambda_1 \lambda_2 x_2\\
				\lambda_1 \lambda_2 x_1 + \lambda_2^2 x_2\\
				0
			\end{pmatrix} \, dx'\\
			& \hphantom{=} \, \, - \int_S f \cdot R \begin{pmatrix}
			0 & W_{12} & W_{13}\\
			-W_{12} & 0 & \frac{c}{a}W_{13}\\
			-W_{13} & -\frac{c}{a}W_{13} & 0
			\end{pmatrix} \lmatrix{\lambda_1 x_1 + \lambda_2 x_2} \, dx' - K(W) \\
			& = \dfrac{1}{2}(\lambda_1^2a + 2\lambda_1 \lambda_2 c + \lambda_2^2 b) - \lambda_1W_{13}(a + b) - \lambda_2W_{13}c\left(1 + \frac{b}{a}\right) - K(W) \,.
		\end{aligned}
	\end{equation}
	If we define
	\[
		M = \begin{pmatrix}
			a & c \\
			c & b
		\end{pmatrix}\,, \quad B = W_{13} \begin{pmatrix}
			a + b\\
			c\left(1 + \frac{b}{a}\right)
		\end{pmatrix}\,, \quad \Lambda= \begin{pmatrix}
			\lambda_1 \\ \lambda_2
		\end{pmatrix} \,,
	\]
	then we have
	\[
		J_{\min} = \dfrac{1}{2} \Lambda^T M \Lambda - B  \Lambda - K(W) \,.
	\]
	By \Cref{lemma:a_b_positive} and \Cref{prop:condition_coefficients_abc} $M$ is positive semidefinite and by \ref{item:stability_VK} $ \Lambda $ is a minimizer of the map  
    \begin{equation}
        v \mapsto \frac{1}{2} v^{T} M v - Bv - K(W).
    \end{equation}
    Thus, $M\Lambda = B$. Solving this system one easily gets that
	\[
		\lambda_1= -\frac{c}{a} \lambda_2 + W_{13}\left(1+ \frac{b}{a}\right) \,.
	\]
	To conclude we just need to prove that $W = 0$. Observe that 
	\[
		(W^2)' = - \begin{pmatrix}
			W_{12}^2 + W_{13}^2 & \dfrac{c}{a}W_{13}^2\\
			\dfrac{c}{a}W_{13}^2 & W_{12}^2 + \dfrac{b}{a}W_{13}^2
		\end{pmatrix}\,.
	\]
	Thus, by definition of $K(W)$,
	\[
		K(W) = -(W_{12}^2 + W_{13}^2)a -2bW_{13}^2 -bW_{12}^2 - \dfrac{b^2}{a}W_{13}^2 \,.
	\]
	Substituting the expression of $\lambda_1$ and $K(W)$ in $J_{\min}$ we get
	\[
		J_{\min} = W_{12}^2(a+b) + W_{13}^2\dfrac{1}{2a}(a+b)^2 \,,
	\]
	so that, since $ a + b > 0 $ and $J_{\min} = 0$, we deduce $W = 0$.
\end{proof}

To simplify the exposition, given $f$ such that $R^T f \cdot e_3 = 0$ for every $R$ optimal rotation $R \in \orot$ let us define
\begin{align}
	& \mathcal{V}_R = \begin{cases}
		\left\{v \in W^{2,2}(S) \colon v(x') = -\lambda \dfrac{c(R)}{a(R)} x_1 + \lambda x_2\,, \, \,  \lambda \in \r \right\} & \text{if} \, \, a(R) \neq 0 \,,\\\\
		\left\{v \in W^{2, 2}(S) \colon v(x') = \lambda x_1 - \lambda \dfrac{c(R)}{b(R)} x_2\,, \, \, \lambda \in \r \right\} & \text{if} \, \, b(R) \neq 0 \,,
	\end{cases}\\\\
	& \mathcal{U}_R = \left\{u \in W^{1, 2}(S; \r^2) \colon u(x') = - \frac{1}{2} (\nabla' v \otimes \nabla' v) x'\,, v \in \mathcal{V}_R \right\} \,.
\end{align}

\begin{lemma}\label{lemma:affine_minimizers_zero_u}
	Suppose $R^T f \cdot e_3 = 0$ for every $R \in \orot$ and $\dim \orot = 1$. Let $R \in \orot$. Then
	\begin{equation}
		\int_S f \cdot R \umatrix{u} \, dx' = 0\,,
	\end{equation}
	for every $u \in \mathcal{U}_R$.
\end{lemma}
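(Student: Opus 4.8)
The plan is to reduce the identity to a short computation built on the relation $a(R)b(R)=c(R)^2$ of \Cref{prop:condition_coefficients_abc} and on the first-order optimality of $R$ inside $\orot$.

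First I would unwind the definitions. Fix $R\in\orot$ and $u\in\mathcal{U}_R$; by definition there is $v\in\mathcal{V}_R$ with $u=-\tfrac12(\nabla'v\otimes\nabla'v)\,x'$. By \Cref{lemma:a_b_positive} at least one of $a(R),b(R)$ is strictly positive, so we may assume $a:=a(R)>0$ (the case $b(R)>0$ being symmetric after exchanging the two planar coordinates). Writing $b:=b(R)$, $c:=c(R)$, the description of $\mathcal{V}_R$ gives $v(x')=\lambda(-\tfrac{c}{a}x_1+x_2)$ for some $\lambda\in\r$; since $v$ is affine, $\nabla'v=\lambda(-\tfrac{c}{a},1)$ is a constant vector and hence
\[
u(x')=-\frac{\lambda^2}{2}\begin{pmatrix}\dfrac{c^2}{a^2}x_1-\dfrac{c}{a}x_2\\ -\dfrac{c}{a}x_1+x_2\end{pmatrix}\,.
\]

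Next I would expand the integral. Put $g:=R^Tf\in L^2(S;\r^3)$, so that $\int_S f\cdot R\umatrix{u}\,dx'=\int_S(g_1u_1+g_2u_2)\,dx'$. By the definitions \eqref{eq:definition_a}--\eqref{eq:definition_c}, $a=\int_S g_1x_1\,dx'$ and $b=\int_S g_2x_2\,dx'$, while $\int_S g_1x_2\,dx'=\int_S g_2x_1\,dx'=c$, the equality of these two first moments being exactly the stationarity condition \eqref{eq:zero_skew} (equivalently \eqref{eq:condition_maximality}) applied to the in-plane rotation generator. Computing term by term,
\begin{align}
\int_S g_1\Bigl(\tfrac{c^2}{a^2}x_1-\tfrac{c}{a}x_2\Bigr)\,dx' &= \tfrac{c^2}{a^2}a-\tfrac{c}{a}c=0\,,\\
\int_S g_2\Bigl(-\tfrac{c}{a}x_1+x_2\Bigr)\,dx' &= -\tfrac{c}{a}c+b=\tfrac{ab-c^2}{a}\,,
\end{align}
so that $\int_S f\cdot R\umatrix{u}\,dx'=-\tfrac{\lambda^2}{2}\cdot\tfrac{ab-c^2}{a}$. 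This vanishes by \Cref{prop:condition_coefficients_abc}, since $ab-c^2=0$ (here $f\not\equiv 0$ is used). In the case $b(R)>0$ the same computation with $v(x')=\lambda(x_1-\tfrac{c}{b}x_2)$ yields $-\tfrac{\lambda^2}{2}\cdot\tfrac{ab-c^2}{b}=0$.

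I do not expect a real obstacle here: the only inputs beyond elementary algebra are the already-established identity $a(R)b(R)=c(R)^2$ and the symmetry $\int_S g_1x_2\,dx'=\int_S g_2x_1\,dx'$ coming from the optimality of $R\in\orot$; everything else is a single expansion, made possible by $v\in\mathcal{V}_R$ being affine so that $\nabla'v\otimes\nabla'v$ is a constant matrix.
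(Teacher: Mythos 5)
Your proof is correct and the computation checks out in both cases. The route differs from the paper's in one respect worth noting: the paper recognizes the constant matrix $\nabla'v\otimes\nabla'v$ as $-(W^2)'$ for an explicit $W\in\tang_R$ and then concludes from $\eqref{eq:definition_tangent}$ via $F(RW^2)=0$ (together with the hypothesis $R^Tf\cdot e_3=0$ to drop the third component); you instead expand the integral coordinate-wise using $\eqref{eq:definition_a}$--$\eqref{eq:definition_c}$ and $\eqref{eq:c_symmetry}$, and land on $-\tfrac{\lambda^2}{2}\cdot\tfrac{ab-c^2}{a}=0$. In effect you are unwinding in coordinates exactly the identity $F(RW^2)=0$ that the paper invokes abstractly, so the two arguments rest on the same inputs (\Cref{prop:condition_coefficients_abc}, the cross-moment symmetry from $\eqref{eq:condition_maximality}$, and the compatibility condition hidden in the definitions of $\mathcal{V}_R$ and in \Cref{prop:condition_coefficients_abc}). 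Your version is more elementary and self-contained, which is useful for a reader who does not want to re-enter the tangent-space language of \Cref{app:optimal_rotations}; the paper's version has the advantage of making the invariance transparent as a second-order optimality statement on $\orot$, which is reused elsewhere (e.g.\ in \Cref{lemma:definition_xi}).
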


\begin{proof}
	Let $u \in \mathcal{U}_R$ and $v \in \mathcal{V}_R$ be such that $u(x') = - \frac{1}{2} (\nabla' v \otimes \nabla' v) x'$. By \eqref{eq:definition_tangent} it is sufficient to prove that $\nabla' v \otimes \nabla' v = - (W^2)'$ for some $W \in \tang_R$. 
	
	Suppose $a(R) \neq 0$. Then $v(x') = -\lambda \dfrac{c(R)}{a(R)} x_1 + \lambda x_2$ for some $\lambda \in \r$, so
	\[
		\nabla' v \otimes \nabla' v = \lambda^2\begin{pmatrix}
			\frac{b(R)}{a(R)} & -\frac{c(R)}{a(R)}\\
			-\frac{c(R)}{a(R)} & 1
		\end{pmatrix} \,,
	\]
	where we used \Cref{prop:condition_coefficients_abc}. Then, defining
	\[
		W = \lambda \begin{pmatrix}
			0 & 0 & \frac{c(R)}{a(R)}\\
			0 & 0 & -1\\
			-\frac{c(R)}{a(R)} & 1 & 0
		\end{pmatrix} \,,
	\]
	we easily get $\nabla' v \otimes \nabla' v = - (W^2)'$ and $W \in \tang_R$ by \Cref{prop:condition_coefficients_abc}. The case $b(R) \neq 0$ can be treated similarly.
\end{proof}

\begin{lemma}\label{lemma:definition_xi}
	Suppose that $R^T f \cdot e_3 = 0$ for every $R \in \orot$ and $\dim \orot = 1$. Let $R \in \orot$ and $v \in \mathcal{V}_R$. Then for any $\bar v \in W^{2, 2}(S)$ there is $\xi \in W^{1, 2}(S; \r^2)$ such that
	\begin{equation}
		\nabla' \xi^T + \nabla' \xi + \nabla' \bar v \otimes \nabla' v + \nabla' v \otimes \nabla'\bar v = 0 
	\end{equation}
	and
	\[
		\int_S f \cdot R \umatrix{\xi} \, dx' = 0 \,.
	\]
\end{lemma}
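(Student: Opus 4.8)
The plan is to first extract from the two hypotheses the strong structural fact that $f$ is a scalar multiple of a \emph{fixed} vector, and then to write down an explicit $\xi$ and verify the two required properties by hand.

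\textbf{Step 1: $f$ is aligned.} Since $\dim\orot=1$ and $\orot$ is a connected, boundaryless, totally geodesic submanifold of $\SO(3)$ \cite{MAOR2021}, it is a coset $\orot=R_0\,\SO(2)_n$, where $R_0\in\orot$ is arbitrary and $\SO(2)_n$ is the group of rotations about an axis $n$ with $|n|=1$ and $n\in\vspan\{e_1,e_2\}$ (see \Cref{app:optimal_rotations}; this reflects that $M:=\int_S f\otimes\xprimo\,dx'$ has rank $1$ and vanishing third column). Writing a generic element of $\orot$ as $R_0Q_\psi$, with $Q_\psi$ the rotation by $\psi$ about $n$, condition \eqref{eq:compatibility} reads $(R_0^Tf)\cdot(Q_\psi e_3)=0$ a.e.\ in $S$ for every $\psi$. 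Because $n\perp e_3$, the vectors $Q_\psi e_3$ sweep out the full great circle of $n^\perp$, so $R_0^Tf(x')\perp n^\perp$, i.e.\ $R_0^Tf(x')\parallel n$, a.e. Hence $f=\rho\,\omega$ a.e.\ in $S$ for $\omega:=R_0 n$ and some $\rho\in L^2(S)$, and, since $Q_\psi$ fixes $n$, $R^T\omega=n\in\vspan\{e_1,e_2\}$ for every $R\in\orot$.

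\textbf{Step 2: the candidate.} As $v\in\mathcal{V}_R$ is affine, $p:=\nabla'v$ is a constant vector; set $\xi:=-\bar v\,p\in W^{2,2}(S;\r^2)\subset W^{1,2}(S;\r^2)$. Since $v$ is affine, $(\nabla'\xi)_{ij}=\partial_j(-\bar v\,p_i)=-p_i\,\partial_j\bar v$, so $\nabla'\xi=-p\otimes\nabla'\bar v$ and $\nabla'\xi^T=-\nabla'\bar v\otimes p$; adding, $\nabla'\xi^T+\nabla'\xi=-\nabla'\bar v\otimes\nabla'v-\nabla'v\otimes\nabla'\bar v$, which is exactly the first required identity.

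\textbf{Step 3: the work vanishes.} Since $\umatrix{p}$ has zero third entry, using $f=\rho\,\omega$ we get
\[
 \int_S f\cdot R\umatrix{\xi}\,dx'=-\int_S\bar v\,\big((R^Tf)'\cdot p\big)\,dx'=-\big((R^T\omega)'\cdot p\big)\int_S\bar v\,\rho\,dx',
\]
so it suffices to show $(R^T\omega)'\cdot p=0$. Inserting $f=\rho\,\omega$ into the definitions \eqref{eq:definition_a}--\eqref{eq:definition_c} gives $a(R)=(R^T\omega)_1P_1$, $b(R)=(R^T\omega)_2P_2$, and $c(R)=(R^T\omega)_1P_2=(R^T\omega)_2P_1$ up to the identity $a(R)b(R)=c(R)^2$ of \Cref{prop:condition_coefficients_abc}, where $P_i:=\int_S\rho\,x_i\,dx'$ and $(P_1,P_2)\neq(0,0)$ (otherwise $F\equiv0$ and $\dim\orot=3$). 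That identity also forces $\big((R^T\omega)_1P_2-(R^T\omega)_2P_1\big)^2=0$, i.e.\ $(R^T\omega)'\parallel(P_1,P_2)$. If $a(R)\neq0$, then $(R^T\omega)_1,P_1\neq0$, and by the definition of $\mathcal{V}_R$ one has $p=\lambda(-c(R)/a(R),1)=\lambda(-P_2/P_1,1)$, which is orthogonal to $(P_1,P_2)$, hence to $(R^T\omega)'$; the case $b(R)\neq0$ is analogous with the coordinates interchanged (using $c(R)/b(R)=P_1/P_2$). As $\mathcal{V}_R$ is well defined, one of $a(R),b(R)$ is nonzero, so $(R^T\omega)'\cdot p=0$ and the work integral vanishes.

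\textbf{Main obstacle.} The heart of the matter is Step 1. The work of \emph{any} $\xi$ solving the first identity is the same (two solutions differ by an infinitesimal rigid motion $Ax'+\eta$, whose work against $f$ is $F(R\iota(A))+(R\umatrix{\eta})\cdot\int_S f\,dx'=0$, because $R\in\orot$ maximizes $F$ over $\SO(3)$ and $\int_S f\,dx'=0$ by \eqref{eq:mean_condition_forces}), and it coincides with the $L^2$-pairing of $\bar v$ with the function $(R^Tf)'\cdot p$; hence it can be annihilated for \emph{every} $\bar v\in W^{2,2}(S)$ only if $(R^Tf)'\cdot p=0$ pointwise a.e. This pointwise statement does not follow from the moment relations $a(R)b(R)=c(R)^2$, $a(R),b(R)\ge0$ alone — it genuinely requires the alignment $f=\rho\,\omega$, which is where the interplay of $\dim\orot=1$ (forcing $\orot$ to be a translated circle) and \eqref{eq:compatibility} (forcing its axis to be orthogonal to $e_3$, so that $f$ is normal to the great circle it sweeps) is used. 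Everything after Step 1 is routine manipulation of $a(R),b(R),c(R)$.
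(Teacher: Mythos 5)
Your proof is correct, but it takes a genuinely different route from the paper's.

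The paper's argument is Lie-theoretic and short: it observes that $\umatrix{\xi}=\lambda W\lmatrix{\bar v}$ with $W\in\tang_R$ (by \Cref{prop:condition_coefficients_abc}), then uses that $\orot$ is totally geodesic so $\Phi(t)=Re^{tW}\in\orot$ for all $t$; compatibility \eqref{eq:compatibility} then makes $t\mapsto\int_S f\cdot\Phi(t)\lmatrix{\bar v}\,dx'$ identically zero, and differentiating at $t=0$ gives the vanishing of the work. You instead extract the stronger structural fact that $\dim\orot=1$ together with \eqref{eq:compatibility} forces $f=\rho\,\omega$ for a scalar $\rho$ and a fixed vector $\omega=R_0n$ with $R^T\omega=n\in\vspan\{e_1,e_2\}$, after which the work reduces to $-\big(n\cdot\nabla'v\big)\int_S\bar v\,\rho\,dx'$ and the vanishing of $n\cdot\nabla'v$ follows from a direct moment computation. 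Both are valid; the paper's proof is more economical, while yours makes explicit the geometric reason for the cancellation (namely that $\nabla'v$ is orthogonal to the geodesic axis $n$, which is also the pointwise direction of $R^Tf$). Two small remarks. First, the parenthetical in your Step~1 (``this reflects that $M$ has rank $1$\dots'') is circular: the rank-one statement is a \emph{consequence} of the alignment you are deriving, not a reason for it. The actual argument for $n\perp e_3$ should be that if $n\cdot e_3\neq0$ then $\{Q_\psi e_3\}_\psi$ spans all of $\mathbb{R}^3$ (the circle does not pass through the origin), forcing $f\equiv0$; with $n\perp e_3$ it spans exactly $n^\perp$, giving $R_0^Tf\parallel n$. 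Second, you take $\xi=-\bar v\,\nabla'v$, which is the correct sign for the displayed identity $\nabla'\xi^T+\nabla'\xi+\nabla'\bar v\otimes\nabla'v+\nabla'v\otimes\nabla'\bar v=0$; the paper's choice $\xi=\lambda\bar v(-c(R)/a(R),1)^T=+\bar v\,\nabla'v$ has a sign typo (benign, since it does not affect whether the work vanishes). Your ``Main obstacle'' paragraph is also correct: replacing $\xi$ by another solution changes the work only by the work of an infinitesimal rigid motion, which vanishes by \eqref{eq:condition_maximality} and \eqref{eq:mean_condition_forces}, and the conclusion for all $\bar v$ genuinely requires the pointwise alignment, not just the moment identities.
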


\begin{proof}
	Suppose $a(R) \neq 0$ and let $\lambda \in \r$ be such that $v(x') = -\lambda \dfrac{c(R)}{a(R)} x_1 + \lambda x_2$. For $\bar v \in W^{2, 2}(S)$ it is sufficient to define
	\[
		\xi(x') = \lambda \bar v(x') \begin{pmatrix}
			-\frac{c(R)}{a(R)} \\
			1
		\end{pmatrix} \,.
	\]
	Note that
	\[
		\umatrix{\xi} = \lambda W \begin{pmatrix}
			0 \\ 0 \\ \bar v
		\end{pmatrix} \quad \text{with} \quad W =  \begin{pmatrix}
		0 & 0 & -\frac{c(R)}{a(R)}\\
		0 & 0 & 1\\
		\frac{c(R)}{a(R)} & -1 & 0
		\end{pmatrix} \in \tang_R
	\]
	by \Cref{prop:condition_coefficients_abc}. In particular,
	\begin{equation}
		\int_S f \cdot R \umatrix{\xi} \, dx' = \lambda \int_S f \cdot RW \lmatrix{\bar v} \, dx' \,. \label{eq:equation_xi_v}
	\end{equation}
	Define the map $\Phi(t) = R e^{tW}$ for $t \in \r$. By \cite[Lemma 4.4]{MAOR2021}, $\Phi(t) \in \orot$ for any $t \in \r$, therefore
	\[
		\int_S f \cdot \Phi(t) \lmatrix{\bar v} \, dx' = 0 \quad \forall \, t \in \r\,,
	\]
	since $\Phi(t)^T f \cdot e_3 = 0$. Differentiating with respect to $t$ at $t = 0$, we deduce
	\[
		\int_S f \cdot RW \lmatrix{\bar v} \, dx' = 0 \,,
	\]
	which gives the thesis by \eqref{eq:equation_xi_v}.
\end{proof}

Having all the previous results at our disposal we can show that $J^{\VK}$ enjoys some invariance properties. 

\begin{prop}\label{prop:invariance_JVK}
	Suppose $R^T f \cdot e_3 = 0$ for every $R \in \orot$ and $\dim \orot = 1$. Let $\bar v \in \mathcal{V}_{R}$ and $\bar u \in \mathcal{U}_{R}$ be such that
	\[
		\nabla' \bar u^T + \nabla' \bar u + \nabla' \bar v \otimes \nabla' \bar v = 0 \,.
	\]
	Then $J^{\VK}( u + \bar u + \xi,  v +\bar v, R, W) = J^{\VK}(u, v, R, W)$ for every admissible quadruplet $(u, v, R, W)$, where $\xi$ is defined as in \Cref{lemma:definition_xi}.
\end{prop}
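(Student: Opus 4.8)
The plan is to expand $J^{\VK}$ evaluated at the perturbed quadruplet $(u+\bar u+\xi,\,v+\bar v,\,R,\,W)$ and to verify, term by term, that the perturbation produces no net change. Two facts will be used throughout. First, by the definition of $\mathcal{V}_{R}$ the function $\bar v$ is linear and homogeneous, so $(\nabla')^{2}\bar v=0$. Second, $\xi$ is the map furnished by \Cref{lemma:definition_xi} applied with the roles of its $v$ and $\bar v$ played here by $\bar v$ and by $v$ (legitimate since $v\in W^{2,2}(S)$, being the second entry of an admissible quadruplet); thus
\begin{equation*}
	\nabla'\xi^{T}+\nabla'\xi+\nabla'v\otimes\nabla'\bar v+\nabla'\bar v\otimes\nabla'v=0\,,\qquad \int_{S}f\cdot R\umatrix{\xi}\,dx'=0\,.
\end{equation*}

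First I would treat the elastic energy. Writing $G(w,z)=\nabla'w^{T}+\nabla'w+\nabla'z\otimes\nabla'z$, a direct expansion gives
\begin{equation*}
	G(u+\bar u+\xi,\,v+\bar v)=G(u,v)+\bigl(\nabla'\bar u^{T}+\nabla'\bar u+\nabla'\bar v\otimes\nabla'\bar v\bigr)+\bigl(\nabla'\xi^{T}+\nabla'\xi+\nabla'v\otimes\nabla'\bar v+\nabla'\bar v\otimes\nabla'v\bigr)\,,
\end{equation*}
and the two parentheses vanish, the first by the hypothesis on $(\bar u,\bar v)$ and the second by the property of $\xi$ just recalled; so the membrane strain equals $G(u,v)$. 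Since $(\nabla')^{2}(v+\bar v)=(\nabla')^{2}v$, the bending term of $E^{\VK}$ is likewise unchanged, hence $E^{\VK}(u+\bar u+\xi,\,v+\bar v)=E^{\VK}(u,v)$.

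Next I would handle the three loading terms. The term $\int_{S}f\cdot RW^{2}\umatrix{x'}\,dx'$ involves neither $u$ nor $v$, so it is untouched. By linearity, $\int_{S}f\cdot R\umatrix{u+\bar u+\xi}\,dx'$ splits as $\int_{S}f\cdot R\umatrix{u}\,dx'+\int_{S}f\cdot R\umatrix{\bar u}\,dx'+\int_{S}f\cdot R\umatrix{\xi}\,dx'$, and the last two integrals vanish by \Cref{lemma:affine_minimizers_zero_u} (since $\bar u\in\mathcal{U}_{R}$) and by \Cref{lemma:definition_xi}, respectively. For the out-of-plane loading term I would observe that, since $\bar v$ is homogeneous and linear, $\int_{S}\bar v\,f\,dx'=(\nabla'\bar v)_{1}\int_{S}x_{1}f\,dx'+(\nabla'\bar v)_{2}\int_{S}x_{2}f\,dx'$; applying $R^{T}$ to this vector, using the compatibility hypothesis $R^{T}f\cdot e_{3}=0$ (which forces the third component of $R^{T}\int_{S}x_{i}f\,dx'$ to vanish) together with the definitions \eqref{eq:definition_a}--\eqref{eq:definition_c}, one obtains $R^{T}\int_{S}\bar v\,f\,dx'=\umatrix{M\nabla'\bar v}$, where $M$ is the symmetric $2\times 2$ matrix with entries $a(R),b(R),c(R)$ that appears in the proof of \Cref{prop:structure_affine_minimizers}. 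But $\mathcal{V}_{R}$ is defined precisely so that $\nabla'\bar v\in\ker M$ --- here one uses $a(R)b(R)=c(R)^{2}$ from \Cref{prop:condition_coefficients_abc} to see that $M$ is singular --- hence $\int_{S}\bar v\,f\,dx'=0$; consequently $\int_{S}f\cdot RW\lmatrix{v+\bar v}\,dx'=\int_{S}f\cdot RW\lmatrix{v}\,dx'+(RWe_{3})\cdot\int_{S}\bar v\,f\,dx'=\int_{S}f\cdot RW\lmatrix{v}\,dx'$ for every $W$. Collecting the four identities yields $J^{\VK}(u+\bar u+\xi,\,v+\bar v,\,R,\,W)=J^{\VK}(u,v,R,W)$.

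The argument is largely routine bookkeeping of quadratic expansions; the step requiring genuine care is the out-of-plane loading term, where the cancellation rests on recognizing that membership of $\bar v$ in $\mathcal{V}_{R}$ is nothing but the statement that $\nabla'\bar v$ is annihilated by $M$, which is what forces $\int_{S}\bar v\,f\,dx'=0$. A secondary point is keeping straight the swap of $v$ and $\bar v$ in the application of \Cref{lemma:definition_xi}, and the fact that the compatibility hypothesis is also what kills the third component of $R^{T}\int_{S}x_{i}f\,dx'$.
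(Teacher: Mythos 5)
Your proof is correct and follows essentially the same route as the paper: the membrane strain and the Hessian of $v$ are shown to be unchanged, the in-plane loading perturbation is killed by \Cref{lemma:affine_minimizers_zero_u} and \Cref{lemma:definition_xi}, and the out-of-plane perturbation vanishes by exploiting the structure of $\mathcal{V}_R$ together with $a(R)b(R)=c^2(R)$. The one cosmetic difference is in the last step: the paper substitutes the explicit form of $\bar v$ and the entries of $W$ into $\int_S f\cdot RW\lmatrix{\bar v}\,dx'$ and cancels directly, whereas you prove the slightly stronger statement $\int_S \bar v\,f\,dx'=0$ by recognizing it as $R\umatrix{M\nabla'\bar v}$ with $\nabla'\bar v\in\ker M$; both computations reduce to the same identity $ab-c^2=0$, and your reformulation is a little cleaner since it dispenses with $W$ altogether. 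Your care in noting the role swap of $v$ and $\bar v$ when invoking \Cref{lemma:definition_xi} is also correct and matches the paper's (more tacit) use of that lemma.
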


\begin{proof}
	Since $v$ is affine we immediately have that $(\nabla')^2(v+\bar v) = (\nabla')^2 v$. Moreover, by definition of $\xi$
	\begin{align}
		& (\nabla'(u + \bar u + \xi))^T + \nabla'(u + \bar u + \xi) + \nabla'(v+\bar v) \otimes \nabla' (v + \bar v)\\
		& \quad = (\nabla' u)^T + \nabla' u + \nabla' v \otimes \nabla' v \,.
	\end{align}
	By \Cref{lemma:affine_minimizers_zero_u,lemma:definition_xi}, to conclude we just need to show that
	\[
		\int_S f \cdot R W \lmatrix{\bar v} \, dx'  = 0 \,.
	\]
	This easily follows from the specific structure of $\bar v$. Indeed, suppose $a(R) \neq 0$ and let $\lambda \in \r$ be such that $\bar v(x') = -\lambda \dfrac{c(R)}{a(R)} x_1 + \lambda x_2$. Then
	\begin{align}
		\int_S f \cdot R W \lmatrix{\bar v} \, dx' & = \lambda\left(-W_{13}c(R) + W_{13}c(R) - W_{23} \frac{c^2(R)}{a(R)} + W_{23}b(R)\right) \\
		& = \lambda\left(- W_{23} \frac{c^2(R)}{a(R)} + W_{23}b(R)\right) = 0\,,
	\end{align}
	since $a(R)b(R) = c^2(R)$ by \Cref{prop:condition_coefficients_abc}.
\end{proof}

We are finally ready to give the proof of \Cref{teo:JVK_has_minimum}.

\begin{proof}[Proof of \Cref{teo:JVK_has_minimum}]
	Let $(u_n, v_n, R_n, W_n)$ be a minimizing sequence for $J^{\VK}$. Let $P_n^{\mathcal{V}}$ be the projection of $W^{2,2}(S)$ onto $\mathcal{V}_{R_n}$. By \Cref{prop:invariance_JVK} and the fact that
	\[
		J^{\VK}(u_n + Ax' + \eta, v_n + \delta, R_n, W_n) = J^{\VK}(u_n, v_n, R_n, W_n) \quad \forall A \in \r^{2 \times 2}_{\skw},  \eta \in \r^2,  \delta \in \r\,,
	\]
	we can suppose that for all $n \in \n$
	\begin{enumerate}[(i)]
		\item $\displaystyle \int_S u_n \, dx' = 0$ \label{item:projection_u}
		\item $\displaystyle \int_S v_n \, dx' = 0$ \label{item:mean_v}
		\item $P_n^{\mathcal{V}}(v_n) = 0$, \label{item:projection_v}
		\item $\displaystyle\int_S \skw(\nabla' u_n) \, dx' = 0$. \label{item:un_skew}
	\end{enumerate}
	Up to a subsequence, we can always assume that $R_n \to R \in \orot$. 
	
	Assume first that $\|u_n\|_{W^{1, 2}} + \|v_n\|^2_{W^{2, 2}} + |W_n|^2 \leq C$. Then, up to a subsequence we have $u_n \todeb u$ in $W^{1, 2}(S; \r^2)$, $v_n \todeb v$ in $W^{2, 2}(S)$ and $W_n \to W$ with $W \in \normal_R$. By lower semicontinuity of $J^{\VK}$ we deduce that $(u, v, R, W)$ is a minimizer of $J^{\VK}$ via the direct method of the Calculus of Variations. 
	
	Suppose now by contradiction that
	\[
		\|u_n\|_{W^{1, 2}} + \|v_n\|^2_{W^{2, 2}} + |W_n|^2 = \gamma_n^2 \to +\infty
	\] 
	and define $\bar u_n = \frac{1}{\gamma_n^2}u_n$, $\bar v_n = \frac{1}{\gamma_n} v_n$ and $\bar W_n = \frac{1}{\gamma_n} W_n$. Then, up to a subsequence, we have $\bar u_n \todeb \bar u$ in $W^{1, 2}(S; \r^2)$, $\bar v_n \todeb \bar v$ in $W^{2, 2}(S)$ and $\bar W_n \to \bar W$ with $\bar W \in \normal_R$. Since $J^{\VK}(u_n, v_n, R_n, W_n) \leq C$, we have
	\begin{equation}\label{eq:bound_gamma}
		\begin{aligned}
			C & \geq \gamma_n^4 \int_S \bar Q((\nabla'\bar u_n)^T + \nabla' \bar u_n + \nabla' \bar v_n \otimes \nabla' \bar v_n) \, dx' + \gamma_n^2 \int_S \bar Q((\nabla')^2 \bar v_n) \, dx'\\
			& \hphantom{\geq} \, \, -\gamma_n^2 \int_S f \cdot R_n \umatrix{\bar u_n} \, dx' - \gamma_n^2\int_S f \cdot R_n \bar W_n \lmatrix{\bar v_n} \, dx' \\
			& \hphantom{\geq} \, \, - \gamma_n^2 \int_S f \cdot R_n (\bar W_n)^2 \xprimo \, dx' \,.
		\end{aligned}
	\end{equation}
	Dividing by $\gamma_n^4$ we get by the coercivity of $\bar Q$
	\begin{equation}\label{eq:convergence_constraint}
		\|(\nabla' \bar u_n)^T + \nabla' \bar u_n + \nabla' \bar v_n \otimes \nabla'\bar v_n\|_{L^2} \leq \dfrac{C}{\gamma_n}
	\end{equation}
	and passing to the limit we deduce that $(\bar u, \bar v) \in \aisolin$.
    Moreover, dividing \eqref{eq:bound_gamma} by $\gamma_n^2$ and passing to the limit we get by lower semicontinuity that $0 \geq J^{\VK}(\bar u, \bar v, R, \bar W)$.
    The stability condition \ref{item:stability_VK} implies that $J^{\VK}(\bar u, \bar v, R, \bar W)$ is zero and $\bar v$ is affine.
    By \Cref{prop:structure_affine_minimizers} and the properties \ref{item:projection_u}--\ref{item:un_skew} we deduce that $\bar u = 0$, $\bar v = 0$ and $\bar W = 0$. 
    If we prove that $\bar u_n$ and $\bar v_n$ are strongly converging, then the proof is concluded since we would have
	\[
		\|\bar u\|_{W^{1, 2}} + \|\bar v\|^2_{W^{2, 2}} + |\bar W|^2 = 1 \,.
	\]
	
	Dividing \eqref{eq:bound_gamma} by $\gamma_n^2$ and passing to the limit we have
	\[
		0 \geq \limsup_{n \to \infty} \int_S \bar Q((\nabla')^2 \bar v_n) \, dx' \,.
	\]
	In particular, by the coercivity of $Q$ we get $(\nabla')^2 \bar v_n \to 0$ in $L^2(S; \r^{2 \times 2})$, giving the strong convergence of $\bar v_n$ in $W^{2, 2}(S)$. By \eqref{eq:convergence_constraint} we have that $\sym(\nabla' \bar u_n) \to 0$ in $L^2(S; \r^{2 \times 2})$. By \ref{item:un_skew} we can apply Korn inequality to deduce that $\bar u_n \to 0$ strongly in $W^{1, 2}(S; \r^2)$, concluding the proof of the first part. 
	
	Suppose now that \ref{item:stability_VK} fails. Let $(\bar v, \bar u) \in \aisolin$ such that for some $\bar R \in \orot$ and $\bar W \in \normal_{\bar R}$ either $J^{\VK}(\bar y, \bar v,\bar R, \bar W) < 0$ or $J^{\VK}(\bar y, \bar v, \bar R, \bar W) = 0$ and $\bar v$ is not affine. In any of these two cases, we have
	\begin{equation}
		- \int_S f \cdot \bar R \umatrix{\bar u} \, dx' - \int_S f \cdot \bar R\bar W \lmatrix{\bar v} \, dx' - \int_S f \cdot \bar R\bar W^2 \umatrix{x'}\, dx' < 0 \,.
	\end{equation}
	In particular we have that $J^{\VK}_{\varepsilon}(\bar u, \bar v, \bar R, \bar W) < 0$ for every choice of $\varepsilon > 0$. Since for every $\gamma > 0$ we have that $(\gamma^2 \bar u, \gamma \bar v) \in \aisolin$ and
	\begin{equation}
		\begin{aligned}
			& J^{\VK}_\varepsilon(\gamma^2 \bar u, \gamma \bar v, \bar R, \gamma \bar W) = \gamma^2\int_S \bar Q((\nabla')^2 v) \, dx' -\gamma^2(1+\varepsilon) \int_S f \cdot \bar R \umatrix{\bar u} \, dx' \\
			& \quad -\gamma^2(1+\varepsilon) \int_S f \cdot \bar R \bar W \lmatrix{\bar v} \, dx' - \gamma^2 (1+\varepsilon)\int_S f \cdot \bar R \bar W^2 \xprimo \, dx' \,,
		\end{aligned}
	\end{equation}
	we deduce that
	\[
		\lim_{\gamma \to +\infty} \dfrac{1}{\gamma^2} J^{\VK}_\varepsilon(\gamma^2 \bar u, \gamma \bar v, \bar R, \gamma \bar W) = J^{\VK}_\varepsilon (\bar u, \bar v, \bar R, \bar W) < 0 \,.
	\]
	This implies that 
	\[
		\lim_{\gamma \to +\infty} J^{\VK}_\varepsilon(\gamma^2 \bar u, \gamma \bar v, \bar R, \gamma \bar W) = - \infty \,,
	\]
	as desired.
\end{proof}

\begin{remark}\label{remark:eps0}
	From the proof it follows that $\inf J^{\VK} = -\infty$ if there is an admissible quadruplet $(\bar u, \bar v, \bar R, \bar W)$ such that $(\bar u, \bar v) \in \aisolin$ and $J^{\VK}(\bar u, \bar v, \bar R, \bar W) < 0$. In this case, one can repeat the same argument with $\varepsilon = 0$.
\end{remark}

\begin{remark}\label{remark:dim_R_0}
	We give a short sketch of the proof of \Cref{teo:JVK_has_minimum} in the case $\dim \orot = 0$. First of all, we can assume without loss of generality that $\orot = \{\Id\}$. Reasoning as in \Cref{prop:condition_coefficients_abc}, since $\normal_{\Id} = \r^{3 \times 3}_{\skw}$, one can show that $ab - c^2 > 0$, where we have written $a, b$, and $c$ in place of $a(\Id), b(\Id)$, and $c(\Id)$. Then, arguing as in \Cref{prop:structure_affine_minimizers}, one can prove that, when \ref{item:stability_VK} holds, any minimizer $(u, v, R, W)$ of $J^{\VK}$ with $(u, v) \in \aisolin$ is of the form $(\eta, \delta, \Id, 0)$, with $\eta \in \r^2$ and $\delta \in \r$. Note that, in this setting, stability condition \ref{item:stability_VK} basically reduces to the \textit{linearized stability} of \cite{LECUMBERRY2009} without imposing any additional Dirichlet condition on the boundary. Finally, one can argue as in the proof of \Cref{teo:JVK_has_minimum} to conclude.
\end{remark}

\appendix

\section{\texorpdfstring{$\bm{\Gamma}$}{\unichar{"0393}}-convergence of the elastic energy}\label{section:gamma_convergence}

The $\Gamma$-convergence of $\frac{1}{h^\alpha} E_h$ when $2 \leq \alpha \leq 4$ is due to Friesecke, James, and M\"uller in a series of works \cite{FRIESECKE2002, FRIESECKE2006}. For the convenience of the reader, we will state here their main results. All the proofs can be found in the aforementioned papers. The key ingredient is the well-known rigidity estimate proved by the same authors in \cite{FRIESECKE2002}.
To conclude the appendix, we prove the energy scaling of some test deformations, inspired by the recovery sequence constructions done in \cite{FRIESECKE2006}.

\begin{teo}[Rigidity estimate]\label{teo:rigidity_estimate}
	Let $(y_h) \subset W^{1,2}(\Omega; \r^3)$ and define
	\[
	D_h = \| \dist(\nabla_h y_h, \SO(3)) \|_{L^2(\Omega)}\,.
	\]
	There are two maps $R_h \in L^\infty(S;\SO(3))$ and $\tilde R_h \in W^{1,2}(S; \r^{3 \times 3}) \cap L^\infty(S; \r^{3 \times 3})$ such that
	\begin{enumerate}[start=1,label={(R\arabic*)}]
		\item $\|\nabla_h y_h - R_h\|_{L^2(\Omega)} \leq C D_h$ \label{item:approximated_rotation_convergence_scaled_gradient},
		\item $\|\nabla' \tilde R_h\|_{L^2(S)} \leq C h^{-1}D_h$\label{item:approximated_rotation_convergence_gradient},
		\item $\|\tilde R_h - R_h\|_{L^2(S)} \leq C D_h$,
		\item $\|\tilde R_h - R_h\|_{L^\infty(S)} \leq C h^{-1}D_h$.
	\end{enumerate}
	Moreover, there exist constant rotations $Q_h \in \SO(3)$ such that
	\begin{equation}
	\|R_h - Q_h\|_{L^2(S)} \leq Ch^{-1}D_h \,.\label{eq:approximated_constant_rotation}
	\end{equation}
	Finally, if $h^{-1}D_h \to 0$, then for $h \ll 1$ one can take $\tilde R_h = R_h$.
\end{teo}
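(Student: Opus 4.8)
The plan is to establish this \emph{via} the geometric rigidity estimate of Friesecke, James and Müller on a fixed domain \cite{FRIESECKE2002}, the essential point being that its constant is stable under the anisotropic rescaling of a thin slab. Recall that for a bounded Lipschitz domain $U\subset\r^3$ and $w\in W^{1,2}(U;\r^3)$ there is a constant $R\in\SO(3)$ with $\|\nabla w-R\|_{L^2(U)}\le C(U)\,\|\dist(\nabla w,\SO(3))\|_{L^2(U)}$. Given a square $S_k=a_k+(0,h)^2$ with $\overline{S_k}\subset S$, set $Q_k=S_k\times I$ and rescale the in-plane variables by $h$ while keeping $x_3$ fixed: the map $w(\xi)=h^{-1}y_h(a_k+h\xi',\xi_3)$ has ordinary gradient $\nabla w(\xi)=(\nabla_h y_h)(a_k+h\xi',\xi_3)$ on the \emph{fixed} domain $(0,1)^2\times I$, so the rigidity constant there is independent of $h$ and $k$; undoing the change of variables, the factor $h^{-2}$ from the Jacobian cancels on both sides and one obtains $R_k\in\SO(3)$ with $\int_{Q_k}|\nabla_h y_h-R_k|^2\le C\int_{Q_k}\dist^2(\nabla_h y_h,\SO(3))$, with $C$ universal.

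First I would cover $S$ by squares of side $h$ as above together with a boundary layer of finitely-overlapping pieces adapted to $\partial S$; condition \eqref{eq:boundary_condition} is precisely what is needed, as in \cite{HORNUNG2011}, to guarantee that after rescaling by $h$ these boundary pieces have a Lipschitz character uniform in $h$ and $k$, so the rigidity constant stays bounded. Setting $R_h=R_k$ on $Q_k$ (constant in $x_3$) and summing the local estimates over the covering, whose multiplicity is bounded uniformly in $h$, yields $\|\nabla_h y_h-R_h\|_{L^2(\Omega)}^2\le CD_h^2$, which is (R1).

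Next I would construct $\tilde R_h$ by smoothing $R_h$ at length scale $h$, for instance as the continuous piecewise-affine interpolant of the values $R_k$ or as $R_h$ convolved with a mollifier of radius $\sim h$. The key estimate is that rotations on adjacent cells are close: applying the rescaled rigidity estimate on the union $Q_k\cup Q_{k'}$ of two neighbouring cells gives $|R_k-R_{k'}|^2\le C h^{-2}\int_{Q_k\cup Q_{k'}}\dist^2(\nabla_h y_h,\SO(3))$, hence in particular $|R_k-R_{k'}|\le Ch^{-1}D_h$. Consequently, on each cell $|\nabla'\tilde R_h|\lesssim h^{-1}\max_{k'\sim k}|R_k-R_{k'}|$ and $|\tilde R_h-R_h|\lesssim\max_{k'\sim k}|R_k-R_{k'}|$; summing over the covering gives $\|\nabla'\tilde R_h\|_{L^2(S)}\le Ch^{-1}D_h$, $\|\tilde R_h-R_h\|_{L^2(S)}\le CD_h$ and $\|\tilde R_h-R_h\|_{L^\infty(S)}\le Ch^{-1}D_h$, which are (R2)--(R4). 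For \eqref{eq:approximated_constant_rotation} I would invoke Poincaré's inequality on the fixed domain $S$ to bound $\|\tilde R_h-m_h\|_{L^2(S)}$ by $\|\nabla'\tilde R_h\|_{L^2(S)}\lesssim h^{-1}D_h$, where $m_h\in\r^{3 \times 3}$ is the mean of $\tilde R_h$ over $S$, and then take $Q_h$ to be the orthogonal projection of $m_h$ onto $\SO(3)$: since $\tilde R_h$ stays within $O(h^{-1}D_h)$ of $\SO(3)$, so does $m_h$, and the triangle inequality together with (R3) closes the bound.

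The last assertion I would get by composition with the nearest-point projection $\Pi$ onto $\SO(3)$, which is smooth with bounded differential on a neighbourhood of $\SO(3)$. If $h^{-1}D_h\to 0$, then by (R4) the map $\tilde R_h$ above takes values in that neighbourhood once $h$ is small, so $\Pi\circ\tilde R_h\in W^{1,2}(S;\SO(3))$ is well defined; replacing both $R_h$ and $\tilde R_h$ by this common map preserves (R1)--(R4) up to constants, since $|\Pi(\tilde R_h)-\tilde R_h|=\dist(\tilde R_h,\SO(3))\le|\tilde R_h-R_h|$ and $|D\Pi|$ is bounded. The one genuinely delicate point, I expect, is the uniform-in-$h$ control of the rigidity and Poincaré constants near $\partial S$, which is exactly what condition \eqref{eq:boundary_condition} secures; everything else is bookkeeping over a covering at scale $h$.
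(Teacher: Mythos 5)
The paper states this theorem without proof, remarking that ``All the proofs can be found in the aforementioned papers'' \cite{FRIESECKE2002, FRIESECKE2006}, so there is no in-paper argument to compare against. Your reconstruction follows the standard Friesecke--James--M\"uller strategy faithfully: cover $S$ at scale $h$, rescale each column $Q_k = S_k \times I$ to a fixed slab so that the geometric rigidity constant is independent of $h$ and $k$, compare rotations on adjacent cells by applying rigidity on the union of two cells, mollify the piecewise-constant field at scale $h$ to produce $\tilde R_h$, use Poincar\'e on $S$ and projection of the mean onto $\SO(3)$ for $Q_h$, and pointwise-project $\tilde R_h$ onto $\SO(3)$ when $h^{-1}D_h \to 0$. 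Each of these steps is executed correctly.

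The one weak point is your claim that condition \eqref{eq:boundary_condition} ``is precisely what is needed, as in \cite{HORNUNG2011}, to guarantee that after rescaling by $h$ these boundary pieces have a Lipschitz character uniform in $h$ and $k$.'' That is a misattribution. Hornung's condition $(\ast)$ only requires the outer normal to exist and be continuous on $\partial S$ off a closed $\mathcal{H}^1$-null set; it neither implies nor is implied by a uniform Lipschitz (cone) condition, and in this paper it is invoked exclusively to apply the density theorem \cite[Theorem~1]{HORNUNG2011} in the proof of \Cref{teo:compatibility_forces}, not for rigidity. What actually makes the rigidity constant uniform on the boundary layer in the FJM argument is the hypothesis that $S$ is a bounded \emph{Lipschitz} domain, which the paper inherits implicitly from \cite{FRIESECKE2002, FRIESECKE2006}. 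Replacing your appeal to $(\ast)$ by that Lipschitz assumption, the rest of the argument is sound.
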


First, we recall the compactness properties of sequences with bounded rescaled energy. We split the results into two cases, one for the Kirchhoff regime and one for the (constrained) Von K\'arm\'an regime.

\begin{prop}[Compactness in the Kirchhoff regime]\label{prop:compactness_alpha_2}
	Let $(y_h) \subset W^{1,2}(\Omega; \r^3)$ be a sequence such that $\frac{1}{h^2}E_h(y_h) \leq C$. Then there is $y \in \aiso$ such that, up to a subsequence, $y_h \to y$ in $W^{1, 2}(\Omega; \r^3)$ and
	\[
		\nabla_h y_h \to (\nabla' y, \nu) \quad \text{in} \, \, L^2(\Omega; \r^{3 \times 3}) \,,
	\]
	where $\nu= \partial_1 y \land \partial_2 y$.
\end{prop}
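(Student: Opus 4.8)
The plan is to reproduce the classical compactness argument of Friesecke, James and M\"uller, whose engine is the rigidity estimate (\Cref{teo:rigidity_estimate}). First I would record that, by \ref{item:lower_bound_W} and the hypothesis $\frac{1}{h^2}E_h(y_h)\le C$, the quantity $D_h:=\|\dist(\nabla_h y_h,\SO(3))\|_{L^2(\Omega)}$ satisfies $D_h\le Ch$, hence $h^{-1}D_h\le C$. This boundedness of $h^{-1}D_h$ (as opposed to infinitesimality) is exactly what one needs to exploit \Cref{teo:rigidity_estimate} in the borderline Kirchhoff scaling. After replacing $y_h$ by $Q_h^{T}y_h+c_h$, where $Q_h$ are the constant rotations provided by \Cref{teo:rigidity_estimate} and $c_h\in\r^3$ are chosen so that $y_h$ has zero mean over $\Omega$, I may assume the sequence normalized; the statement is understood up to such rigid motions.

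Next I would extract the limiting rotation field. From \ref{item:approximated_rotation_convergence_gradient} and $\|\tilde R_h-Q_h\|_{L^2(S)}\le Ch^{-1}D_h$, the sequence $(\tilde R_h)$ is bounded in $W^{1,2}(S;\r^{3\times3})$ (using the normalization $Q_h=\Id$); so, along a subsequence, $\tilde R_h\rightharpoonup R$ weakly in $W^{1,2}$ and, by Rellich, $\tilde R_h\to R$ strongly in $L^2(S;\r^{3\times3})$, and since $\SO(3)$ is closed one gets $R\in\SO(3)$ a.e. As $\|R_h-\tilde R_h\|_{L^2}\le CD_h\to0$, also $R_h\to R$ in $L^2(S)$, and then \ref{item:approximated_rotation_convergence_scaled_gradient} together with $D_h\le Ch\to0$ yields $\nabla_h y_h\to R$ strongly in $L^2(\Omega;\r^{3\times3})$ (regarding $R$ as independent of $x_3$). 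Reading off columns, $\nabla'y_h\to(Re_1\mid Re_2)$ and $\frac1h\partial_3 y_h\to Re_3$ in $L^2(\Omega)$; in particular $\partial_3 y_h\to0$. Since $\nabla y_h$ is then bounded in $L^2(\Omega;\r^{3\times3})$ and $y_h$ has zero mean, $(y_h)$ is bounded in $W^{1,2}(\Omega;\r^3)$, so a further subsequence has $y_h\rightharpoonup y$ in $W^{1,2}$ and $y_h\to y$ in $L^2$.

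Then I would identify the limit. Because $\partial_3 y_h\to0$, the function $y$ does not depend on $x_3$, hence $y\in W^{1,2}(S;\r^3)$ with $\nabla'y=(Re_1\mid Re_2)$; since $R\in W^{1,2}(S)$ this upgrades $y$ to $W^{2,2}(S;\r^3)$, and $R\in\SO(3)$ a.e.\ gives $(\nabla'y)^{T}\nabla'y=\Id$ a.e., i.e.\ $y\in\aiso$. For the Gauss-map assertion, I would use that the third column of a matrix in $\SO(3)$ is the cross product of its first two, so $Re_3=Re_1\land Re_2=\partial_1 y\land\partial_2 y=\nu$; therefore $\nabla_h y_h\to(\nabla'y,\nu)$ strongly in $L^2(\Omega;\r^{3\times3})$. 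Finally, convergence of the first two columns of $\nabla_h y_h$ in $L^2$ is precisely $\nabla'y_h\to\nabla'y$ in $L^2$, and $\partial_3 y_h\to0=\partial_3 y$; combined with $y_h\to y$ in $L^2$, this gives $y_h\to y$ strongly in $W^{1,2}(\Omega;\r^3)$.

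The hard part is entirely encapsulated in \Cref{teo:rigidity_estimate}, which I am treating as given; granted that, the remaining steps are routine weak-compactness-plus-Rellich bookkeeping. The only points requiring a little care are keeping the anisotropic scaling in $\nabla_h$ under control — so that the $h^{-1}\partial_3$ column is tracked separately from $\nabla'$ — and noticing that mere boundedness of $h^{-1}D_h$, rather than $h^{-1}D_h\to0$, already suffices to bound $\tilde R_h$ in $W^{1,2}(S)$.
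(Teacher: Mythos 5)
Your argument is essentially the Friesecke--James--M\"uller compactness proof (FJM 2002, Theorem 4.1), which is exactly what the paper has in mind: the appendix explicitly defers this proposition to those references without reproducing the proof, so you and the paper are following the same route, with the rigidity estimate doing all the work. The chain of deductions — $D_h\le Ch$ from \ref{item:lower_bound_W}, weak $W^{1,2}$-compactness of the rotation fields $\tilde R_h$, strong $L^2$-identification $\nabla_h y_h\to R$, then $y\in\aiso$ and $\nu=Re_3$ — is correct and complete.

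Two small remarks. First, the conjugation by $Q_h^T$ is superfluous and slightly misstates what is being proved. You invoke it to bound $\tilde R_h$ in $L^2$, but that bound is automatic without any normalization: $\|\tilde R_h-R_h\|_{L^\infty}\le Ch^{-1}D_h\le C$ by the rigidity estimate, and $R_h$ takes values in the compact set $\SO(3)$, so $\tilde R_h$ is uniformly bounded in $L^\infty$, hence in $L^2$, and together with \ref{item:approximated_rotation_convergence_gradient} you already have the required $W^{1,2}$ bound. Moreover, the proposition does \emph{not} assert convergence merely ``up to rigid motions'': its conclusion is for $y_h$ itself (and is used that way in the proof of \Cref{teo:stability_false}), so replacing $y_h$ by $Q_h^T y_h$ silently changes the claim being proved. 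Dropping the $Q_h$ altogether is both simpler and more faithful. Second, the additive normalization by $c_h$ really is needed (the energy is translation-invariant, so without it $y_h$ can run off to infinity while keeping $E_h(y_h)=0$); the paper's statement of the proposition elides this, but in FJM and in your write-up it is correctly present.
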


\begin{prop}[Compactness in the Von K\'arm\'an regime]\label{prop:compactness_alpha_greater_2}
	Let $(y_h) \subset W^{1,2}(\Omega; \r^3)$ be a sequence of deformations and let $(D_h) \subset \r^+$ be a sequence such that:
	\begin{enumerate}[(i)]
		\item $\frac{D_h}{h^2} \to 0$,
		\item $\limsup_{h \to 0} \frac{1}{D_h} E_h(y_h) \leq C$.
	\end{enumerate}
	Then there are constant rotations $R_h \in \SO(3)$ and constant vectors $c_h \in \r^3$ such that, setting
	\begin{equation}
	\tilde y_h = R_h^T y_h + c_h \,,
	\end{equation}
	we have
	\begin{enumerate}[(a)]
		\item $u_h \todeb u$ in $W^{1,2}(S; \r^2)$, \label{item:convergence_u}
		\item $v_h \to v$ in $W^{1,2}(S)$ with $v \in W^{2, 2}(S)$, \label{item:convergence_v}
	\end{enumerate}
	where
	\begin{align}
	& u_h \colon S \to \r^2 && x' \mapsto \min\left\{\dfrac{1}{\sqrt{D_h}}, \dfrac{h^2}{D_h}\right\}  \int_I \left(\tilde y_h' - x'\right) \, dx_3\,, \label{eq:definition_u}\\
	& v_h \colon S \to \r && x' \mapsto \dfrac{h}{\sqrt{D_h}} \int_I \tilde y_{h, 3} \, dx_3 \,. \label{eq:definition_v}
	\end{align}
	Lastly, if $\frac{D_h}{h^4} \to + \infty$, then $(u, v) \in \aisolin$.
\end{prop}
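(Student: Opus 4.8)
The plan is to follow the compactness argument of Friesecke, James and M\"uller, organised so as to cover uniformly the whole range $D_h=o(h^2)$. First I would apply the rigidity estimate of \Cref{teo:rigidity_estimate}: by \ref{item:lower_bound_W}, $\|\dist(\nabla_h y_h,\SO(3))\|_{L^2}^2\le C E_h(y_h)\le C D_h$, so the quantity playing the role of ``$D_h$'' in \Cref{teo:rigidity_estimate} is $\le C\sqrt{D_h}=o(h)$; hence one may take $\tilde R_h=R_h\in W^{1,2}(S;\SO(3))$ with $\|\nabla_h y_h-R_h\|_{L^2(\Omega)}\le C\sqrt{D_h}$, $\|\nabla' R_h\|_{L^2(S)}\le Ch^{-1}\sqrt{D_h}$ and, for suitable constant rotations $Q_h$, $\|R_h-Q_h\|_{L^2(S)}\le Ch^{-1}\sqrt{D_h}$. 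Setting $B_h:=\tfrac{h}{\sqrt{D_h}}(R_h-Q_h)$, these bounds say precisely that $B_h$ is bounded in $W^{1,2}(S;\r^{3\times 3})$, so along a subsequence $Q_h\to\bar Q$, $B_h\todeb B$ in $W^{1,2}$ and $B_h\to B$ strongly in every $L^q(S)$, $q<\infty$; expanding $R_h^TR_h=\Id$ to leading order in $\sqrt{D_h}/h$ shows that $\bar Q^TB$ is skew.

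For the out-of-plane convergence (b) I would take the constant rotation in the statement to be (provisionally) $Q_h$, and $c_h$ so that $\int_S u_h=0$ and $\int_S v_h=0$. Since $\partial_j\tilde y_{h,3}=(Q_h^TR_h)_{3j}+\rho_{h,j}$ with $\|\rho_{h,j}\|_{L^2(\Omega)}\le C\sqrt{D_h}$, and $(Q_h^TR_h)_{3j}=\tfrac{\sqrt{D_h}}{h}(Q_h^TB_h)_{3j}$ for $j\in\{1,2\}$ (because $Q_h^TR_h=\Id+\tfrac{\sqrt{D_h}}{h}Q_h^TB_h$), averaging over $I$ and multiplying by $h/\sqrt{D_h}$ gives $\partial_j v_h=(Q_h^TB_h)_{3j}+O(h)_{L^2(S)}$, hence $\nabla' v_h\to((\bar Q^TB)_{31},(\bar Q^TB)_{32})$ strongly in $L^2(S)$. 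With $\int_S v_h=0$ and the Poincar\'e--Wirtinger inequality this yields $v_h\to v$ in $W^{1,2}(S)$, and $\nabla' v=((\bar Q^TB)_{31},(\bar Q^TB)_{32})\in W^{1,2}(S;\r^2)$ since $B\in W^{1,2}$, so $v\in W^{2,2}(S)$.

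For the in-plane convergence (a) the heart of the matter is that $\sym\nabla' u_h$ is bounded in $L^2(S)$, although $\nabla' u_h$ need not be if the constant rotation is merely $Q_h$. Writing $\nabla' u_h=a_h\big[(Q_h^TR_h)'-\Id_2\big]+a_h E_h$ with $\|E_h\|_{L^2(S)}\le C\sqrt{D_h}$, and using the identity $\sym M-\Id=-\tfrac12(\Id-M)(\Id-M)^T$ valid for $M\in\SO(3)$, one gets $\|\sym[(Q_h^TR_h)'-\Id_2]\|_{L^2(S)}\le\tfrac12\|R_h-Q_h\|_{L^4(S)}^2\le Ch^{-2}D_h$ by the two-dimensional Gagliardo--Nirenberg inequality, so that
\[
\|\sym\nabla' u_h\|_{L^2(S)}\le C\big(a_h h^{-2}D_h+a_h\sqrt{D_h}\big)\le C,
\]
because $a_h h^{-2}D_h=\min\{D_h^{1/2}h^{-2},1\}\le1$ and $a_h\sqrt{D_h}=\min\{1,h^2D_h^{-1/2}\}\le1$. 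Now Korn's second inequality gives $\|u_h-\pi_h\|_{W^{1,2}(S)}\le C\|\sym\nabla' u_h\|_{L^2}\le C$, where $\pi_h$ is the infinitesimal rigid motion closest to $u_h$; I would then fix $c_h$ and replace $Q_h$ by its composition with an in-plane rotation of a small angle $\beta_h\to 0$ (which translates $u_h$ by a constant and adds a constant infinitesimal rotation to $\nabla' u_h$) so as to make $\pi_h=0$. After this normalisation $\nabla' u_h$ is bounded in $L^2(S)$, the constant rotation still converges to $\bar Q$, and with $\int_S u_h=0$ the Poincar\'e inequality gives $u_h$ bounded in $W^{1,2}(S;\r^2)$, hence $u_h\todeb u$ along a subsequence.

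Finally, when $D_h/h^4\to+\infty$ one has $a_h=h^2/D_h$, so $a_h\sqrt{D_h}=h^2D_h^{-1/2}\to 0$ and $a_hD_h/h^2=1$. Using Jensen's inequality in the $x_3$-variable (the variance terms have $L^1(S)$-norm $\le C D_h$, hence are $o(1/a_h)$ since $a_hD_h=\min\{D_h^{1/2},h^2\}\to 0$) together with the elementary pointwise bound $|A^TA-\Id|\le C(\dist^2(A,\SO(3))+\dist(A,\SO(3)))$, which gives $\|G_h\|_{L^1(\Omega)}\le C\sqrt{D_h}$ for the rotation-invariant nonlinear strain $G_h=(\nabla'\tilde y_h)^T\nabla'\tilde y_h-\Id_2$, one obtains
\[
2\sym\nabla' u_h+\tfrac{1}{a_h}(\nabla' u_h)^T\nabla' u_h+\nabla' v_h\otimes\nabla' v_h\longrightarrow 0\quad\text{in }L^1(S).
\]
Since $a_h\to+\infty$ and $\nabla' u_h$ is bounded in $L^2$, the middle term vanishes in $L^1$; passing to the limit with $\nabla' u_h\todeb\nabla' u$ and $\nabla' v_h\to\nabla' v$ strongly in $L^2$ yields $\nabla' u^T+\nabla' u+\nabla' v\otimes\nabla' v=0$, i.e.\ $(u,v)\in\aisolin$. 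I expect the main obstacle to be this in-plane step: obtaining the $L^2$ bound on $\sym\nabla' u_h$ and, above all, converting it through Korn's inequality and the ad hoc choice of the constant rotation into an $L^2$ bound on the full gradient $\nabla' u_h$, while checking that the competing error terms --- controlled by $a_h\sqrt{D_h}$, $a_h h^{-2}D_h$ and $a_hD_h$ --- stay uniformly bounded (or infinitesimal, where needed) throughout the whole regime $D_h=o(h^2)$, including the $x_3$-fluctuation estimate used for the linearised-isometry constraint.
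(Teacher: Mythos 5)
This proposition is stated in the paper's appendix as a known result of Friesecke, James, and M\"uller, with no proof supplied (the text explicitly defers to \cite{FRIESECKE2002,FRIESECKE2006}); so there is no proof in the paper to compare against, and what you have written is a reconstruction of the FJM compactness argument. Your outline is essentially that standard argument and is, in the main, correct: from $\|\dist(\nabla_h y_h,\SO(3))\|_{L^2}\le C\sqrt{D_h}=o(h)$ you correctly invoke \Cref{teo:rigidity_estimate} with $\tilde R_h=R_h$, introduce $B_h=\tfrac{h}{\sqrt{D_h}}(R_h-Q_h)$ bounded in $W^{1,2}(S;\r^{3\times3})$ whose limit satisfies $\bar Q^TB+B^T\bar Q=0$; you read off $\nabla'v_h\to\big((\bar Q^TB)_{31},(\bar Q^TB)_{32}\big)$ and hence $v\in W^{2,2}$ because $B\in W^{1,2}$; you bound $\sym\nabla'u_h$ using $\sym M-\Id=-\tfrac12(\Id-M)(\Id-M)^T$ together with Gagliardo--Nirenberg; and you recover the constraint $2\sym\nabla'u+\nabla'v\otimes\nabla'v=0$ in the regime $D_h/h^4\to\infty$ by passing to the limit in the scaled nonlinear strain, with the variance and $\tfrac1{a_h}(\nabla'u_h)^T\nabla'u_h$ terms vanishing for the reasons you give.

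The one step that is genuinely under-justified is the conversion of the $L^2$-bound on $\sym\nabla'u_h$ into an $L^2$-bound on $\nabla'u_h$ itself by post-composing $Q_h$ with an in-plane rotation $\exp(\beta_h J_3)$. Your parenthetical description of the effect as ``translating $u_h$ by a constant and adding a constant infinitesimal rotation to $\nabla'u_h$'' is inaccurate: the substitution $Q_h\mapsto Q_h\exp(\beta_h J_3)$ sends $u_h$ to $\exp(-\beta_hJ)u_h+a_h\big(\exp(-\beta_hJ)-\Id\big)x'$, so $\nabla'u_h$ is also conjugated by $\exp(-\beta_hJ)$, not merely shifted. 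The plan still works, but one must check (i) that the angle needed to kill the offending skew mean $\alpha_h$, namely $\tan\beta_h=\alpha_h/a_h$, is $o(1)$ — which holds because $|\alpha_h|\le Ca_hh^{-1}\sqrt{D_h}$; and (ii) that the new symmetric contribution $\alpha_h\sin\beta_h\,\Id+a_h(\cos\beta_h-1)\,\Id$, of size $\alpha_h^2/a_h\le Ca_hh^{-2}D_h\le C$, stays bounded in $L^2$. A cleaner route, closer to what FJM actually do, is to pick the constant rotation up front as the polar factor of $\int_S R_h\,dx'$: then $\int_S Q_h^TR_h\,dx'$ is symmetric, the skew mean of $a_h\big[(Q_h^TR_h)'-\Id_2\big]$ vanishes identically, the remaining contribution to the skew mean of $\nabla'u_h$ comes only from the error term and is $O(a_h\sqrt{D_h})\le1$, and no post-hoc angle correction is needed.
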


To conclude, we recall the $\Gamma$-convergence results.

\begin{teo}[$\Gamma$-convergence for the Kirchhoff regime]\label{teo:gamma_convergence_alpha_2}
	We have the following.
	\begin{enumerate}[(i)]
		\item For any sequence $(y_h) \subset W^{1,2}(\Omega; \r^3)$ such that $y_h \to y$ in $W^{1, 2}(\Omega; \r^3)$ for some $y \in \aiso$ it holds
		\[
		\liminf\limits_{h \to 0} \frac{1}{h^2}E_h(y_h) \geq E^{\KL}(y)\,.
		\]\label{item:liminf_inequality_alpha_2}
		\item For any $y \in \aiso$ there exists a sequence $(y_h) \subset W^{1,2}(\Omega; \r^3)$ such that $y_h \to y$ in $W^{1, 2}(\Omega; \r^3)$,
		\[
		\lim_{h \to 0} \frac{1}{h^2}E_h(y_h) = E^{\KL}(y) \,,
		\]\label{item:recovery sequence_alpha_2}
		and $\nabla_h y_h \to (\nabla' y, \nu)$ in $L^2(\Omega; \r^{3 \times 3})$.
	\end{enumerate}
\end{teo}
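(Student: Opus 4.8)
The statement is the classical plate result of Friesecke, James and M\"uller \cite{FRIESECKE2002}, and the plan is to prove the two assertions separately: the liminf inequality by a compactness-plus-Taylor-expansion argument built on the rigidity estimate, and the upper bound by an explicit recovery sequence with a warping correction, reduced to regular isometric immersions via the density result valid under \eqref{eq:boundary_condition}.

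\emph{Lower bound.} I would take $(y_h)$ with $y_h\to y$ in $W^{1,2}(\Omega;\r^3)$, $y\in\aiso$, assume $\liminf_h h^{-2}E_h(y_h)<\infty$ and pass to a subsequence realizing it with $h^{-2}E_h(y_h)\le C$. By \ref{item:lower_bound_W} one gets $\|\dist(\nabla_h y_h,\SO(3))\|_{L^2}^2\le CE_h(y_h)\le Ch^2$, so \Cref{teo:rigidity_estimate} applies with $D_h\le Ch$, producing rotation fields bounded in $W^{1,2}(S;\r^{3\times3})$ (using $\|\nabla' \tilde R_h\|_{L^2}\le Ch^{-1}D_h\le C$ and $\|\nabla_h y_h-R_h\|_{L^2}\le CD_h$). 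Along a further subsequence $R_h\rightharpoonup \bar R$ in $W^{1,2}$, strongly in every $L^q(S)$, with $\bar R\in W^{1,2}(S;\SO(3))$, and comparing with $y_h\to y$ forces $\bar R=(\partial_1 y\mid\partial_2 y\mid\nu)$, $\nu=\partial_1 y\land\partial_2 y$. Then I would set $G_h:=h^{-1}(R_h^T\nabla_h y_h-\Id)$, which is bounded in $L^2(\Omega;\r^{3\times3})$ since $|R_h^T\nabla_h y_h-\Id|=|\nabla_h y_h-R_h|$, so $G_h\rightharpoonup G$ in $L^2$ up to a subsequence.

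The core of the liminf is a truncation plus Taylor step: let $\chi_h$ be the indicator of $\{|G_h|\le h^{-1/2}\}$, whose complement has measure $O(h)$ by Chebyshev, so $|hG_h|\le h^{1/2}$ on $\{\chi_h=1\}$; by \ref{item:frame_indifference}, \ref{item:regularity_W} and $W=DW=0$ on $\SO(3)$,
\[
\int_{\{\chi_h=1\}}\frac{1}{h^2}W(\nabla_h y_h)\,dx=\int_{\{\chi_h=1\}}\frac12 Q(G_h)\,dx+o(1)=\frac12\int_\Omega Q\big(\sym(\chi_h G_h)\big)\,dx+o(1),
\]
using $Q(F)=Q(\sym F)\ge0$. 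Since $\chi_h G_h\rightharpoonup G$ in $L^2$ and $F\mapsto\int Q(\sym F)$ is convex, weak lower semicontinuity and the definition of $\bar Q$ give $\liminf_h h^{-2}E_h(y_h)\ge\frac12\int_\Omega Q(\sym G)\,dx\ge\frac12\int_\Omega\bar Q((\sym G)')\,dx$. Finally I would identify the $x_3$-structure of the in-plane strain: using $\partial_3(\nabla_h y_h)_{\cdot\alpha}=h\,\partial_\alpha(\nabla_h y_h)_{\cdot3}$ together with the convergences of $R_h$ and $\nabla' R_h$ one shows $(\sym G)'(x',x_3)=x_3\,(\nabla' y)^T\nabla'\nu\,(x')+C(x')$ for a symmetric matrix field $C$ on $S$; since $\int_I x_3\,dx_3=0$ and $\int_I x_3^2\,dx_3=\frac1{12}$, integrating in $x_3$ and discarding $C$ yields $\frac12\int_\Omega\bar Q((\sym G)')\ge\frac1{24}\int_S\bar Q\big((\nabla' y)^T\nabla'\nu\big)\,dx'=E^{\KL}(y)$.

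\emph{Recovery sequence.} Since $S$ satisfies \eqref{eq:boundary_condition}, \cite[Theorem 1]{HORNUNG2011} gives density of $W^{2,\infty}(S;\r^3)\cap\aiso$ in $\aiso$; together with continuity of $E^{\KL}$ and a diagonal argument it suffices to treat $y\in W^{2,\infty}(S;\r^3)\cap\aiso$ (a further mollification makes the warping field below smooth, at the cost of one more diagonalization). Writing $R_0=(\partial_1 y\mid\partial_2 y\mid\nu)\in W^{1,\infty}(S;\SO(3))$ and $\mathrm{II}:=(\nabla' y)^T\nabla'\nu$, I would let $a^*(x')$ be the (data-linear) minimizer defining $\bar Q(\mathrm{II})$, pick $d$ with $\sym(R_0^T(\nabla'\nu\mid d))=\mathrm{II}+a^*\otimes e_3+e_3\otimes a^*$ (possible since $R_0$ is invertible and the third row/column of $R_0^T(\nabla'\nu\mid d)$ are prescribable via $d$), and set $y_h(x',x_3):=y(x')+hx_3\,\nu(x')+\frac{h^2x_3^2}{2}\,d(x')$. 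Then $R_0^T\nabla_h y_h=\Id+hx_3 R_0^T(\nabla'\nu\mid d)+r_h$ with $\|r_h\|_{L^\infty}\le Ch^2$, so by frame indifference and Taylor expansion $h^{-2}W(\nabla_h y_h)\to\frac12 x_3^2 Q(R_0^T(\nabla'\nu\mid d))=\frac12 x_3^2\bar Q(\mathrm{II})$ pointwise and, by dominated convergence, in $L^1(\Omega)$; integrating gives $h^{-2}E_h(y_h)\to\frac1{24}\int_S\bar Q(\mathrm{II})\,dx'=E^{\KL}(y)$, and $y_h\to y$ in $W^{1,2}$ with $\nabla_h y_h\to(\nabla' y,\nu)$ in $L^2$.

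The main obstacle I anticipate is twofold: making the truncation argument rigorous so that the pointwise Taylor remainder becomes a globally negligible error, and — the genuinely hard step — identifying the limiting strain $(\sym G)'$ as $x_3$ times the second fundamental form, which forces a careful use of the rigidity bound $\|\nabla' R_h\|_{L^2}\le C$ and of the commutation $\partial_3(\nabla_h y_h)_{\cdot\alpha}=h\,\partial_\alpha(\nabla_h y_h)_{\cdot3}$. On the recovery side the only real subtlety is the density of regular isometric immersions, which is exactly what \eqref{eq:boundary_condition} secures.
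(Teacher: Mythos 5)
The paper does not give its own proof of this theorem; it is quoted from Friesecke--James--M\"uller, with the appendix stating explicitly that all proofs can be found in the cited references \cite{FRIESECKE2002, FRIESECKE2006}. Your sketch is a correct reconstruction of that standard argument (rigidity estimate, truncation--Taylor lower bound with identification of the $x_3$-affine limiting strain, and the warped recovery sequence reduced to regular isometries via \cite{HORNUNG2011}), so it coincides with the route the paper intends.
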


\begin{teo}[$\Gamma$-convergence for the Von K\'arm\'an regime]\label{teo:gamma_convergence_alpha_greater_2}
	Let $(D_h) \subset \r^+$ be a sequence such that $\frac{D_h}{h^2} \to 0$ and $D_h \geq Ch^4$ for $h \ll 1$. We have the following results.
	\begin{enumerate}[(i)]
		\item For any sequence $(y_h) \subset W^{1,2}(\Omega; \r^3)$ such that
		\begin{enumerate}[(a)]
			\item $u_h \todeb u$ in $W^{1,2}(S; \r^2)$,
			\item $v_h \to v$ in $W^{1,2}(S)$ with $v \in W^{2, 2}(S)$,
		\end{enumerate}
		where $u_h, v_h$ are defined as in \eqref{eq:definition_u}--\eqref{eq:definition_v} with $y_h$ in place of $\tilde y_h$ we have
		\begin{equation}
			\liminf\limits_{h \to 0} \frac{1}{D_h}E_h(y_h) \geq E^{\VK}(u, v) \,.
		\end{equation}\label{item:liminf_inequality}
		\item Assume that $\frac{D_h}{h^4} \to +\infty$. Then for any $(u, v) \in \aisolin$ there exists a sequence $(y_h) \subset W^{1,2}(\Omega; \r^3)$ such that
		\begin{enumerate}[(a)]
			\item $u_h \todeb u$ in $W^{1,2}(S; \r^2)$,
			\item $v_h \to v$ in $W^{1,2}(S)$ with $v \in W^{2, 2}(S)$,
		\end{enumerate}
		and
		\[
		\lim\limits_{h \to 0} \frac{1}{D_h}E_h(y_h) = E^{\VK}(u, v) \,,
		\]
		where $u_h$ and $v_h$ are defined as in \eqref{eq:definition_u}--\eqref{eq:definition_v} with $y_h$ in place of $\tilde y$.  \label{item:recovery_sequence_alpha_grater_2}
		\item Assume that $\frac{D_h}{h^4} \to 1$. Then for any $(u, v) \in W^{1, 2}(S; \r^2) \times W^{2, 2}(S)$ there exists a sequence $(y_h) \subset W^{1,2}(\Omega; \r^3)$ such that
		\begin{enumerate}[(a)]
			\item $u_h \todeb u$ in $W^{1,2}(S; \r^2)$,
			\item $v_h \to v$ in $W^{1,2}(S)$ with $v \in W^{2, 2}(S)$,
		\end{enumerate}
		and
		\[
		\lim\limits_{h \to 0} \frac{1}{D_h}E_h(y_h) = E^{\VK}(u, v) \,,
		\]
		where $u_h$ and $v_h$ are defined as in \eqref{eq:definition_u}--\eqref{eq:definition_v} with $y_h$ in place of $\tilde y$. \label{item:recovery_sequence_alpha_4}
	\end{enumerate}
\end{teo}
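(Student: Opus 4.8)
The plan is to reproduce the by-now-standard dimension-reduction argument of Friesecke, James and Müller \cite{FRIESECKE2002,FRIESECKE2006}, adapted to a generic scaling $D_h$ with $h^4\lesssim D_h\ll h^2$, which simultaneously covers the constrained regime $D_h/h^4\to+\infty$ and the genuine Von Kármán regime $D_h/h^4\to1$. The two ingredients are the rigidity estimate \Cref{teo:rigidity_estimate}, which upgrades the $L^2$-control of $\dist(\nabla_h y_h,\SO(3))$ to a $W^{1,2}$ rotation field $R_h$ with $\|\nabla'R_h\|_{L^2}\le Ch^{-1}\|\dist(\nabla_h y_h,\SO(3))\|_{L^2}$, and the second-order expansion $W(\Id+F)=\tfrac12 Q(F)+o(|F|^2)$ — valid by \ref{item:minimization_W}--\ref{item:regularity_W} since $\Id$ is a minimizer — together with the fact that frame indifference \ref{item:frame_indifference} makes $Q$ depend only on $\sym F$ and that $\min_{a\in\r^3}Q(\sym F'+a\otimes e_3+e_3\otimes a)=\bar Q(\sym F')$.

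\emph{Lower bound \textup{(i)}.} One may assume $\tfrac1{D_h}E_h(y_h)\le C$. Then $\|\dist(\nabla_h y_h,\SO(3))\|_{L^2}^2\le CE_h(y_h)\le CD_h$ by \ref{item:lower_bound_W}, and since $D_h/h^2\to0$ forces $h^{-1}\sqrt{D_h}\to0$, the rigidity estimate gives $R_h\in W^{1,2}(S;\SO(3))$ with $\|\nabla_h y_h-R_h\|_{L^2}\le C\sqrt{D_h}$ and $\|\nabla'R_h\|_{L^2}\le Ch^{-1}\sqrt{D_h}$; the assumed convergence of $u_h,v_h$ together with \eqref{eq:definition_u}--\eqref{eq:definition_v} forces $R_h\to\Id$ in $L^2$ (after the standard reduction by a constant rotation, which alters neither $E_h$ nor the displacements). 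Set the rescaled strain $G_h:=D_h^{-1/2}(R_h^T\nabla_h y_h-\Id)$, so that $\|G_h\|_{L^2(\Omega)}\le C$ and $G_h\rightharpoonup G$ in $L^2(\Omega;\r^{3\times3})$ along a subsequence. The core computation identifies $G$: using the bounds on $\nabla'R_h$ and $R_h-\Id$ and the definitions of $u_h,v_h$ one shows that the $2\times2$ block of $\sym G$ is
\[
\sym G'(x',x_3)=\big(\sym\nabla'u+\tfrac12\nabla'v\otimes\nabla'v\big)(x')-x_3(\nabla')^2v(x'),
\]
with the membrane part vanishing when $D_h/h^4\to+\infty$ (then $(u,v)\in\aisolin$) and surviving when $D_h/h^4\to1$, while the $e_3$-row and column of $\sym G$ remain free. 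Introducing the truncation $\Omega_h:=\{\,D_h^{1/2}|G_h|\le\rho\,\}$ for small fixed $\rho$ (so $|\Omega\setminus\Omega_h|\to0$, $\mathbf 1_{\Omega_h}G_h\rightharpoonup G$, $W(\nabla_h y_h)=\tfrac{D_h}{2}Q(G_h)+o(D_h)$ uniformly on $\Omega_h$, and $W\ge0$ off it), weak lower semicontinuity of the convex functional $H\mapsto\int_\Omega\tfrac12 Q(H)$ and the pointwise bound $Q(\sym G)\ge\bar Q(\sym G')$ give
\[
\liminf_{h\to0}\tfrac1{D_h}E_h(y_h)\ \ge\ \int_\Omega\tfrac12 Q(G)\ \ge\ \int_\Omega\tfrac12\bar Q(\sym G')\,dx=E^{\VK}(u,v),
\]
where the last equality is the elementary $x_3$-integration ($\int_I x_3\,dx_3=0$, $\int_I x_3^2\,dx_3=\tfrac1{12}$) of the affine-in-$x_3$ integrand.

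\emph{Upper bound \textup{(ii)--(iii)}.} For smooth data I would use the classical FJM recovery ansatz (of which the comparison maps of \Cref{prop:fine_test_deformation,prop:fine_test_deformation_det} are simplified versions): writing $\theta_h:=D_h^{1/2}/h$,
\[
y_h(x',x_3)=\begin{pmatrix}x'\\ hx_3\end{pmatrix}+\begin{pmatrix}\theta_h^2\,u_\delta\\ \theta_h\,v_\delta\end{pmatrix}-\theta_h h\,x_3\begin{pmatrix}\nabla'v_\delta\\ 0\end{pmatrix}+\theta_h h^2\,x_3\,d_\delta(x'),
\]
where $u_\delta,v_\delta$ are smooth approximations of $u,v$ and $d_\delta\colon S\to\r^3$ is a pointwise Cosserat vector chosen so that $Q(\sym G_\delta)=\bar Q(\sym G'_\delta)$ with $\sym G'_\delta=\sym\nabla'u_\delta+\tfrac12\nabla'v_\delta\otimes\nabla'v_\delta-x_3(\nabla')^2v_\delta$; a lower-order correction can be appended to absorb the remainder. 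Computing $\nabla_h y_h$ (its leading-order rotation is $e^{\theta_h A_{v_\delta}}$ with $A_{v_\delta}\in\rskew$ the tilt of the normal) one gets $D_h^{-1/2}(e^{-\theta_h A_{v_\delta}}\nabla_h y_h-\Id)\to G_\delta$, hence $\tfrac1{D_h}E_h(y_h)\to\int_\Omega\tfrac12\bar Q(\sym G'_\delta)\,dx=E^{\VK}(u_\delta,v_\delta)$, while $u_h\rightharpoonup u_\delta$ and $v_h\to v_\delta$. A diagonal argument as $\delta\to0$, using continuity of $E^{\VK}$ along the approximations, produces the recovery sequence. In (iii) a plain mollification of $(u,v)\in W^{1,2}(S;\r^2)\times W^{2,2}(S)$ works and the membrane term is retained; in (ii) the smoothing must be performed inside $\aisolin$, e.g. by first approximating $v\in\adet$ by smooth functions via the density theorem of \cite{HORNUNG2011} and then solving for $u_\delta$ as in \Cref{lemma:contruction_isometries}, so that $(u_\delta,v_\delta)\in\aisolin$ throughout.

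\emph{Main difficulty.} The delicate point is the identification of the weak limit $G$ in (i): one must extract precisely the combination $\sym\nabla'u+\tfrac12\nabla'v\otimes\nabla'v$ and the curvature $(\nabla')^2v$ from $R_h^T\nabla_h y_h-\Id$, controlling the interplay between $R_h$, its gradient, and the (in general distinct) scalings of the in-plane and out-of-plane displacements, and showing that the $e_3$-components of $\sym G$ are genuinely unconstrained so that the relaxation to $\bar Q$ is sharp and is matched by the Cosserat correction in the recovery sequence. The companion difficulty is the constrained approximation needed in (ii) — producing smooth (or $W^{2,\infty}$) pairs in $\aisolin$ converging to a prescribed one without breaking the quadratic constraint — which is exactly where the density theorem of \cite{HORNUNG2011} enters. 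Both points are precisely where \cite{FRIESECKE2006} does the real work, so I would either reproduce those arguments or, as the present paper does, simply invoke them.
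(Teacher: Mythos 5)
The paper does not prove this theorem: the appendix states explicitly that the $\Gamma$-convergence for $2\le\alpha\le4$ is due to Friesecke, James and M\"uller and that all the proofs can be found in \cite{FRIESECKE2002,FRIESECKE2006}. Your sketch is a faithful outline of exactly that FJM argument (rigidity estimate, rescaled-strain identification, truncation and lower semicontinuity for the lower bound; the Cosserat-corrected ansatz plus smoothing, performed inside $\aisolin$ via $\adet$ and \cite{HORNUNG2011} for part (ii) and by plain mollification for part (iii), for the recovery sequence), so it coincides with the source the paper invokes.
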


\begin{prop}\label{prop:fine_test_deformation}
    Let $R \in \SO(3)$ and $v \in C^\infty(\bar S)$.
    Consider the test deformation
	\begin{equation}
		\hat y_h(x', x_3) = R \begin{pmatrix}
			x' \\ hx_3
		\end{pmatrix} + R \begin{pmatrix}
			- h^2x_3 \nabla v^T \\ hv
		\end{pmatrix} \,.
	\end{equation}
    Then $ E_{h}(\hat y_{h}) = O(h^{4}) $.
\end{prop}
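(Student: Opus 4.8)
The plan is to show that $\nabla_h \hat y_h$ stays within distance $O(h^2)$ of $\SO(3)$ uniformly on $\Omega$, and then to combine this with the $C^2$ regularity of $W$ near $\SO(3)$ (hypothesis \ref{item:regularity_W}) and the vanishing of $W$ on $\SO(3)$ (hypothesis \ref{item:minimization_W}) to deduce that $W(\nabla_h \hat y_h) = O(h^4)$ pointwise; integrating over the fixed-measure domain $\Omega$ then gives the claim. Observe first that, since $v \in C^\infty(\bar S)$, the deformation $\hat y_h$ is smooth on $\bar\Omega$ and hence a legitimate competitor.

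First I would compute the rescaled gradient explicitly. Writing $\hat y_h = R\Phi_h$ with $\Phi_h = (x_1 - h^2 x_3 \partial_1 v,\ x_2 - h^2 x_3 \partial_2 v,\ hx_3 + hv)$, a direct differentiation gives $\nabla_h \hat y_h = R(\Id + hA + h^2 B)$, where
\[
A = \begin{pmatrix} 0 & 0 & -\partial_1 v \\ 0 & 0 & -\partial_2 v \\ \partial_1 v & \partial_2 v & 0 \end{pmatrix}\,, \qquad
B = \begin{pmatrix} -x_3\partial_{11} v & -x_3\partial_{12} v & 0 \\ -x_3\partial_{12} v & -x_3\partial_{22} v & 0 \\ 0 & 0 & 0 \end{pmatrix}\,.
\]
The key point is that $A$ is skew-symmetric. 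Since $v$ and its first and second derivatives are bounded on $\bar S$ and $x_3 \in I$, both $A$ and $B$ are bounded in $L^\infty(\Omega;\r^{3\times 3})$ uniformly in $h$.

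Exploiting the skew-symmetry, $e^{hA} \in \SO(3)$, and from the power series $\|e^{hA} - \Id - hA\| \le \tfrac12 h^2 \|A\|^2 e^{h\|A\|} \le Ch^2$; using also that left multiplication by $R \in \SO(3)$ is an isometry for the Frobenius distance to $\SO(3)$, we get
\[
\dist(\nabla_h \hat y_h, \SO(3)) = \dist(\Id + hA + h^2 B, \SO(3)) \le \|\Id + hA + h^2B - e^{hA}\| \le Ch^2
\]
uniformly on $\Omega$. On the other hand, by \ref{item:minimization_W} the compact set $\SO(3)$ is the zero set of the nonnegative function $W$, so $W$ and $DW$ vanish there, and by \ref{item:regularity_W} $D^2 W$ is bounded on a neighbourhood $\mathcal N$ of $\SO(3)$; a second-order Taylor expansion at the point of $\SO(3)$ nearest to $F$ then yields $W(F) \le C\,\dist^2(F, \SO(3))$ for every $F \in \mathcal N$. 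For $h$ small enough that the previous estimate places $\nabla_h\hat y_h(x)$ inside $\mathcal N$ for a.e.\ $x$, this gives $W(\nabla_h \hat y_h(x)) \le Ch^4$ a.e.\ in $\Omega$, and integrating over $\Omega$ gives $E_h(\hat y_h) \le Ch^4$, i.e.\ $E_h(\hat y_h) = O(h^4)$.

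As for the main obstacle: the computation is entirely routine, and the only point requiring genuine care is the explicit expansion of $\nabla_h \hat y_h$ and the observation that its first-order term $A$ is skew-symmetric — this is exactly what upgrades the naive bound $O(h^2)$ to $O(h^4)$, and it reflects the fact that $\hat y_h$ is patterned on the Von K\'arm\'an recovery sequence of \cite{FRIESECKE2006}.
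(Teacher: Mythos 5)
Your proof is correct, and it takes a genuinely different route from the paper's. The paper argues via the polar factor: it computes $\nabla_h \hat y_h^T \nabla_h \hat y_h$, notes that the $O(h)$ contribution drops out because the first-order perturbation is skew-symmetric, expands the matrix square root to write $\sqrt{\nabla_h \hat y_h^T\nabla_h \hat y_h} = \Id + O(h^2)$ with an explicit $O(h^2)$ coefficient, and then invokes frame indifference $W(F) = W(\sqrt{F^T F})$ together with a second-order Taylor expansion of $W$ at $\Id$ to obtain the precise leading-order term $\tfrac12 h^4 Q(\cdot) + o(h^4)$. You instead identify an explicit nearby rotation $Re^{hA}$ (exploiting the same skew-symmetry of $A$) to bound $\dist(\nabla_h\hat y_h, \SO(3)) \le Ch^2$ uniformly, and then use the quadratic bound $W \le C\dist^2(\cdot,\SO(3))$ near $\SO(3)$ — which follows from $W \ge 0$, $W|_{\SO(3)} = 0$, $DW|_{\SO(3)} = 0$, and local boundedness of $D^2W$ — rather than an exact Taylor expansion. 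Both proofs hinge on the same cancellation (the first-order term is an infinitesimal rotation), but yours avoids the matrix square root expansion and is a touch more elementary; the trade-off is that the paper's computation additionally identifies the exact coefficient of the $h^4$ term, which is the reason these test deformations can serve as recovery sequences in the $\Gamma$-limit, whereas your argument only yields the upper bound — which is all the stated proposition requires.
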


\begin{proof}
	We have
	\[
		\nabla_h \hat y_h = R\left(\Id + \begin{pmatrix}
			-h^2 x_3 \nabla^2 v & -h\nabla v^T \\
			h \nabla v & 0
		\end{pmatrix}\right) \,.
	\]
	Hence,
	\[
		\nabla_h \hat y_h^T \nabla_h \hat y_h = \Id - 2h^2 x_3\begin{pmatrix}
		\nabla^2 v & 0 \\
		0 & 0
		\end{pmatrix} -h^2\nabla v \otimes \nabla v +O(h^3) \,.
	\]
	Expanding the square root around the identity we get
	\[
		\sqrt{\nabla_h \hat y_h^T \nabla_h \hat y_h} = \Id -h^2 x_3 \begin{pmatrix}
		\nabla^2 v & 0 \\
		0 & 0
		\end{pmatrix} -\frac{h^2}{2}\nabla v \otimes \nabla v +O(h^3) \,.
	\]
	Finally, since by frame-indifference $W(\nabla_h \hat y_h) = W\left(\sqrt{\nabla_h \hat y_h^T \nabla_h \hat y_h}\right)$, expanding the energy $W$ we have
	\[
		W(\nabla_h \hat y_h) = \frac{1}{2}h^4Q\left(x_3\begin{pmatrix}
		\nabla^2 v & 0 \\
		0 & 0
		\end{pmatrix} + \frac{1}{2}\nabla v \otimes \nabla v\right) + o(h^4) \,.
	\]
    Integrating over $ S $ and recalling that $ o(h^{4}) $ is uniform in $ S $, we conclude.
\end{proof}

\begin{prop}\label{prop:fine_test_deformation_det}
    Let $ (D_{h}) \subset \mathbb{R}^{+} $ be an infinitesimal sequence such that $ \frac{D_{h}}{h^{2}} \to 0 $.
    Let $v \in C^\infty(\bar S)$ such that $\det((\nabla')^2 v) = 0$ in $S$.
    Let $\tilde u_h \in W^{2, \infty}(S; \r^2)$ be such that
	\[
		\tilde y_h(x') = \xprimo + \begin{pmatrix}
			h^{-2}D_h \tilde u_h \\
			h^{-1}\sqrt{D_h}v
		\end{pmatrix} \,
	\]
	is an isometric embedding, i.e., $ \nabla' \tilde y_{h}^{T} \nabla' \tilde y_{h} = \Id $.
    Moreover, suppose that $ \|u_{h}\|_{W^{2, \infty}} \leq C $.
    Let $ R \in \SO(3) $ and define 
	\begin{equation}
		\hat y_h = R\tilde y_h + h x_3 R\nu_h \,,
	\end{equation}
	where $\nu_h  = \partial_1 \tilde y_h \land \partial_2 \tilde y_h$.
    Then $ E_{h}(\hat y_{h}) = O(D_{h}) $.
\end{prop}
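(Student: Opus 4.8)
The plan is to run the standard Kirchhoff recovery-sequence estimate of \cite{FRIESECKE2006}, adapted to the $h$-dependent data: I compute $\nabla_h\hat y_h$ explicitly, show that its principal part is a rotation-valued field, and bound the remainder so that $\dist(\nabla_h\hat y_h,\SO(3))$ is of order $\sqrt{D_h}$ uniformly on $\Omega$. The $C^2$ regularity of $W$ near $\SO(3)$ then upgrades this to the quadratic bound $W(\nabla_h\hat y_h)\le CD_h$ pointwise, and integrating over the bounded set $\Omega$ gives $E_h(\hat y_h)=O(D_h)$.

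First I would record that, since $\tilde y_h$ is an isometric embedding, the columns $\partial_1\tilde y_h,\partial_2\tilde y_h$ of $\nabla'\tilde y_h$ are orthonormal, and together with $\nu_h=\partial_1\tilde y_h\land\partial_2\tilde y_h$ they form a field $\tilde R_h:=(\partial_1\tilde y_h\mid\partial_2\tilde y_h\mid\nu_h)\in\SO(3)$ a.e.\ in $S$. Differentiating $\hat y_h=R\tilde y_h+hx_3R\nu_h$ gives $\partial_j\hat y_h=R\partial_j\tilde y_h+hx_3R\partial_j\nu_h$ for $j=1,2$ and $\tfrac1h\partial_3\hat y_h=R\nu_h$, whence
\[
\nabla_h\hat y_h=R\tilde R_h+hx_3\,R\,(\nabla'\nu_h\mid 0).
\]
Since $R\tilde R_h\in\SO(3)$ and $|x_3|\le\tfrac12$ on $\Omega$, and since $R$ preserves the Frobenius norm, this yields the pointwise bound $\dist(\nabla_h\hat y_h,\SO(3))\le\tfrac h2\|\nabla'\nu_h\|_{L^\infty(S)}$.

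Next I would estimate $\|\nabla'\nu_h\|_{L^\infty}$ from the explicit form of $\tilde y_h$: one has $\partial_j\tilde y_h=e_j+O(h^{-2}D_h,h^{-1}\sqrt{D_h})$ and $\partial_k\partial_j\tilde y_h=O(h^{-2}D_h,h^{-1}\sqrt{D_h})$ in $L^\infty(S)$, using $v\in C^\infty(\bar S)$ and the uniform bound $\|\tilde u_h\|_{W^{2,\infty}}\le C$. Because $D_h/h^2\to 0$ we have $h^{-1}\sqrt{D_h}\to 0$ and $h^{-2}D_h=(h^{-1}\sqrt{D_h})^2\ll h^{-1}\sqrt{D_h}$, so both errors reduce to $O(h^{-1}\sqrt{D_h})$ and are, in particular, bounded. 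Expanding $\partial_k\nu_h=\partial_k\partial_1\tilde y_h\land\partial_2\tilde y_h+\partial_1\tilde y_h\land\partial_k\partial_2\tilde y_h$ and noting that in each term one factor is $O(h^{-1}\sqrt{D_h})$ while the other is bounded, I get $\|\nabla'\nu_h\|_{L^\infty}\le Ch^{-1}\sqrt{D_h}$. Combining with the previous step, $\dist(\nabla_h\hat y_h,\SO(3))\le C\sqrt{D_h}$ uniformly on $\Omega$.

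Finally, since $D_h\to 0$, for $h$ small $\nabla_h\hat y_h$ lies in the neighbourhood of $\SO(3)$ on which $W$ is $C^2$ by \ref{item:regularity_W}; there, as $W$ and $DW$ vanish on $\SO(3)$ by \ref{item:minimization_W}, a Taylor expansion gives $W(A)\le C\dist^2(A,\SO(3))$. Hence $W(\nabla_h\hat y_h)\le CD_h$ a.e.\ in $\Omega$, and integrating over the bounded domain $\Omega$ gives $E_h(\hat y_h)=O(D_h)$. I do not expect a genuine obstacle here — the computation is routine and parallels the Kirchhoff recovery-sequence estimate. The only points requiring a little care are: tracking which of the two scales dominates (the second-fundamental-form scale $h^{-1}\sqrt{D_h}$, not the in-plane scale $h^{-2}D_h$), so that $\nabla'\nu_h$ is correctly of order $h^{-1}\sqrt{D_h}$ and the distance to $\SO(3)$ is $O(\sqrt{D_h})$ rather than something larger; and using the uniform-in-$h$ bound $\|\tilde u_h\|_{W^{2,\infty}}\le C$ together with the smoothness of $v$ so that all $L^\infty$ estimates are uniform in $h$.
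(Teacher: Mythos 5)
Your proposal is correct and follows essentially the same route as the paper: both decompose $\nabla_h\hat y_h$ as the rotation-valued field $R(\nabla'\tilde y_h\mid\nu_h)$ plus a correction of size $h\,x_3\,\nabla'\nu_h$, both estimate $\|\nabla'\nu_h\|_{L^\infty}=O(h^{-1}\sqrt{D_h})$ from the explicit form of $\tilde y_h$ and the uniform $W^{2,\infty}$ bound on $\tilde u_h$, and both conclude by Taylor-expanding $W$ near $\SO(3)$. The paper phrases the last step via frame-indifference (factoring out $R\hat R_h$ and expanding $W$ at the identity), whereas you use the equivalent pointwise bound $W(A)\le C\dist^2(A,\SO(3))$ near $\SO(3)$; this is a cosmetic difference only.
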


\begin{proof}
    Setting
	\[
		\hat R_h = \begin{pmatrix}
		\nabla' \tilde y_h & \nu_h
		\end{pmatrix} \,,
	\]
	we have
	\[
		\nabla_h \hat y_h = R\hat R_h(\Id + hx_3\hat R_h^T (\nabla' \nu_h \, \,  0)) \,.
	\]
	It is easily found that
	\[
		\nabla' \nu_h = -h^{-1} \sqrt{D_h} \begin{pmatrix}
                (\nabla')^2 v  \\
                0
            \end{pmatrix} + O(h^{- 2}D_h) \,,
	\]
	thus
	\[
		\nabla_h \hat y_h = R\hat R_h\left(\Id + \sqrt{D_h} x_3\hat R_h^T \begin{pmatrix}
                (\nabla')^{2} v & 0\\
                0 & 0
		\end{pmatrix} + O(h^{-1}D_h)\right) \,.
	\]
	Expanding $W$ near the identity, one gets by frame-indifference that $E_h(\hat y_h) = O(D_h)$.
\end{proof}

\section{Fine properties of optimal rotations}\label{app:optimal_rotations}

In this appendix, we recall some properties of optimal rotation,  and we further analyse their structure in our specific setting. We will restrict ourselves to dead loads of body type. However, the same analysis can be carried out for surface dead loads. Consider a non-zero force $f \in L^2(S; \r^3)$ and suppose that
\[
	\int_S f \, dx' = 0 \,.
\]
We define the set of optimal rotations as

\[
	\orot = \argmax_{R \in SO(3)} F(R) \,,
\]
where
\[
F(A) = \int_S f \cdot A \xprimo \, dx' = \int_\Omega f \cdot A \begin{pmatrix}
x' \\ x_3
\end{pmatrix}\, dx \,.
\]
For the convenience of the reader, we recall here some properties of $\orot$ proved in \cite{MAOR2021}. The set $\orot$ is a closed, connected, boundaryless, and totally geodesic submanifold of $SO(3)$ (see \cite[Proposition 4.1]{MAOR2021}). 

The set of rotations can be equipped with its intrinsic distance
\begin{equation}
	\dist_{\SO(3)}(Q, R) = \min \left\{|W| \colon W \in \rskew \,,\,  Q = Re^W \right\} \,, \label{eq:definition_intrinsic_distance}
\end{equation}
for every $Q, R \in \SO(3)$. The tangent space to $\orot$ at the point $R$ is denoted with $\tang_R$ and is given by
\begin{equation}
	\tang_R = \left\{W \in \rskew \colon F(RW^2) = 0\right\} \label{eq:definition_tangent}\,.
\end{equation}
Note that by differentiating the map $t \mapsto F(Re^{tW})$ and evaluating it at $t=0$ we obtain
\begin{equation}
	F(RW) = 0 \,,\, F(RW^2) \leq 0 \quad \forall \, R \in \orot \quad \forall \, W \in \rskew \,. \label{eq:condition_maximality}
\end{equation}
The normal space to $\orot$ at the point $R$ is denoted by $\normal_R$ and is given by
\begin{equation} \label{eq:definition_normal}
	\normal_R = \left\{W \in \rskew \colon W : W' = 0 \quad \forall \, W' \in \tang_R\right\}\,.
\end{equation}
Observe that for any non-zero skew-symmetric matrix $W$ in $\normal_R$ one has $F(RW^2) < 0$. 

We start the section giving an example of force for which the compatibility condition \eqref{eq:compatibility} is satisfied. 

\begin{example}
	Consider $S = (-\frac{1}{2}, \frac{1}{2})^2$ and $f = x_1e_3$. A quick computation gives
	\[
	F(R) = \dfrac{1}{12}R_{31} \,,
	\]
	thus, $\orot = \{R \in \SO(3) \colon R_{31} = 1\}$. In particular for any optimal rotation $R \in \orot$ we have $R^T e_3 = e_1$, so that $R^T f \cdot e_3 = (x_1 - \frac{1}{2})e_1 \cdot e_3 = 0$.
\end{example}

We start now to prove some further properties of optimal rotations, valid for our setting. 

\begin{lemma}\label{lemma:dimension_R}
	The dimension of $ \orot $ is not $ 2 $.
\end{lemma}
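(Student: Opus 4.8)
The plan is to turn the statement into a rank computation for a single $3 \times 3$ positive semidefinite matrix. Fix $R \in \orot$ and set $g = R^{T} f$. Since $\orot$ is a submanifold, $\dim \orot = \dim \tang_{R}$, and by the characterisation \eqref{eq:definition_tangent} together with the inequality $F(RW^{2}) \le 0$ from \eqref{eq:condition_maximality}, the set $\tang_{R}$ is exactly the kernel of the positive semidefinite quadratic form $Q_{R}(W) := -F(RW^{2})$ on $\rskew$. Identifying $W \in \rskew$ with $w \in \r^{3}$ via $Wv = w \wedge v$, let $N_{R}$ be the symmetric matrix representing $Q_{R}$; then $\dim \orot = 3 - \rank N_{R}$, so it suffices to show that $N_{R}$ cannot have rank exactly one.

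First I would compute $N_{R}$ explicitly. Writing $W\xprimo = w \wedge \xprimo$ one gets, from the first-order condition $F(RW) = 0$ for all skew $W$ in \eqref{eq:condition_maximality}, that the mixed moments involving $g_{3}$ vanish, i.e. $\int_{S} g_{3} x_{1}\,dx' = \int_{S} g_{3} x_{2}\,dx' = 0$, and also $\int_{S} g_{1} x_{2}\,dx' = \int_{S} g_{2} x_{1}\,dx'$. For the second-order term one uses $W^{2} = w \otimes w - |w|^{2}\Id$, so that $W^{2}\xprimo = (w \cdot \xprimo)\,w - |w|^{2}\xprimo$, and integrating against $g$ and substituting the identities above makes $N_{R}$ block diagonal:
\[
    N_{R} = \begin{pmatrix} b & -c & 0 \\ -c & a & 0 \\ 0 & 0 & a+b \end{pmatrix}, \qquad a = \int_{S} g_{1}x_{1}\,dx',\quad b = \int_{S} g_{2}x_{2}\,dx',\quad c = \int_{S} g_{1}x_{2}\,dx'.
\]

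It then remains a short linear-algebra argument. Since $N_{R} \succeq 0$, its diagonal entries are nonnegative, so $a, b \ge 0$. If $\rank N_{R} = 1$, the ranks of the diagonal blocks $\bigl(\begin{smallmatrix} b & -c \\ -c & a \end{smallmatrix}\bigr)$ and $(a+b)$ must add up to one. If $a + b > 0$, the $1\times 1$ block has rank one, hence the $2 \times 2$ block vanishes, giving $a = b = c = 0$ and contradicting $a+b > 0$. If instead $a + b = 0$, then $a = b = 0$, and positive semidefiniteness of $\bigl(\begin{smallmatrix} 0 & -c \\ -c & 0 \end{smallmatrix}\bigr)$ (whose eigenvalues are $\pm|c|$) forces $c = 0$, so this block vanishes as well and $\rank N_{R} = 0$, again a contradiction. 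Hence $\rank N_{R} \neq 1$, i.e. $\dim \orot \neq 2$.

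The individual computations are routine; the one step that genuinely matters — and where a careless argument would break — is the observation that the first-order condition $F(RW) = 0$ decouples the out-of-plane direction from the in-plane ones, so that $N_{R}$ is block diagonal. A generic $3 \times 3$ positive semidefinite matrix can of course have rank one, so it is precisely this structural fact that excludes $\dim \orot = 2$. (Note that the argument uses only that $\orot$ is a submanifold together with \eqref{eq:definition_tangent}--\eqref{eq:condition_maximality}, not that it is totally geodesic.)
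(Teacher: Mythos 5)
Your proof is correct, and it takes a genuinely different route from the paper's. The paper proves \Cref{lemma:dimension_R} by representing the linear functional $F$ as a $3\times 3$ matrix $\tilde F$ (in the frame where $\Id\in\orot$), observing that its third column vanishes because $\xprimo$ has zero third component, hence $\det\tilde F=0$, and then invoking the classification of \cite[Proposition 6.2]{MAOR2021}, which says $\dim\orot=2$ forces the eigenvalues of $\tilde F$ to be $a,a,-a$ with $a>0$ -- incompatible with a zero eigenvalue. You instead work with the \emph{second-order} form: since $\tang_R$ is exactly the zero set of the positive semidefinite quadratic form $W\mapsto -F(RW^2)$ on $\rskew$ (and for a PSD form the zero set is a linear subspace, the kernel of its matrix $N_R$), the equality $\dim\orot=3-\rank N_R$ reduces the claim to $\rank N_R\neq1$; the first-order conditions $F(RW)=0$ make $N_R$ block diagonal with $2\times2$ block $\bigl(\begin{smallmatrix}b&-c\\-c&a\end{smallmatrix}\bigr)$ and scalar block $a+b$, and a short case split on whether $a+b>0$ rules out rank one. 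Your computation of $N_R$ is correct (and, after the change of variables $w_1=-W_{23},\,w_2=W_{13},\,w_3=-W_{12}$, it is the same quadratic form $q$ that appears later in the paper's proof of \Cref{prop:condition_coefficients_abc}). The upshot is that your argument avoids citing the eigenvalue classification of \cite{MAOR2021} altogether, using only the tangent-space description \eqref{eq:definition_tangent}--\eqref{eq:condition_maximality} and elementary linear algebra, at the cost of an explicit Hessian computation; the paper's argument is shorter but leans on the external classification result. Both identify the same structural fact -- the out-of-plane direction decouples -- as the reason $\dim\orot=2$ is excluded.
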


\begin{proof}
	Let $\bar R \in \orot$.
    Define
	\[
		\tilde F(A) = \int_\Omega f \cdot \bar R A \begin{pmatrix}
			x' \\ 0
		\end{pmatrix} \, dx = \int_\Omega \bar R^T f \cdot A \begin{pmatrix}
		x' \\0 
		\end{pmatrix} \, dx\,.
	\]
	Similarly, define
	\[
		\tilde \orot= \argmax_{R \in \SO(3)} \tilde F(R) \,.
	\]
	Note that $\orot = \bar R \cdot \tilde \orot$ so it is enough to prove that $\dim \tilde \orot \neq 2$.  Clearly $\Id \in \tilde \orot$, so we can use the classification of \cite[Proposition 6.2]{MAOR2021}.
    Since $\tilde F$ and $F$ are linear on the space of $3 \times 3$ matrices, we can represent them by $3 \times 3$ matrices, that we will still denote, with a slight abuse of notation, by $\tilde F$ and $F$.
    By \cite[Proposition 6.2]{MAOR2021}, $\orot$ is two-dimensional when the eigenvalues of $\tilde F$ are of the form $a, a, -a$ for some $a > 0$. Note first of all that
	\[
		\tilde F \colon A = F:\bar R A \quad \forall \, A \in \r^{3 \times 3} \,,
	\]
	so that $\tilde F = \bar R^T F$. Moreover,
	\[
		F_{i3} = F:E^{i3} = F(E^{i3}) = 0 \quad i = 1, 2, 3 \,,
	\]
	where $E^{ij}$ is the matrix such that $E_{km}^{ij} = \delta_{ki}\delta_{mj}$ and $\delta_{ij}$ is the usual Kronecker symbol.
    It follows that $\det(\tilde F) = \det(F) = 0$ and $0$ is an eigenvalue of $\tilde F$, concluding the proof.
\end{proof}

\begin{remark}\label{remark:dimension_R}
    If \eqref{eq:compatibility} is in force, then we also have $ \dim \orot \neq 3 $, thus $ \dim \orot $ is either a singleton or a closed geodesic (see \cite[Proposition 6.2]{MAOR2021}).
    However, as showed in \cite{MAOR2021}, we can have non-zero forces for which $ \orot = \SO(3) $.
    As an example consider $ f = (1 - \frac{3}{2}|x|)e_{1} $ acting on $ S = B_{1} $.
    Then
    \begin{equation}
        F(R) = \int_{B_{1}} f(x) \cdot R \xprimo \, dx' = \int_{0}^{1} \int_{0}^{2\pi} r\left(1 - \frac{3}{2}r\right)e_{1} \cdot R \begin{pmatrix}
            r\cos\theta\\
            r\sin\theta\\
            0
        \end{pmatrix} \, d\theta \, dr = 0. 
    \end{equation}
    In particular $ \orot = \SO(3) $.
    In this case \eqref{eq:compatibility} does not hold.
\end{remark}

The set of rotations is not linear. However, the 2-dimensional structure of the integral that defines $F$ gives us the freedom to perform some change of sign to the columns of a rotation while keeping the sign of its determinant. A few simple results follow from this observation.

\begin{lemma}\label{lemma:sign_F}
	If \eqref{eq:compatibility} holds, then $\max\limits_{R \in \SO(3)} F(R) > 0$. Otherwise, $\max\limits_{R \in \SO(3)} F(R) \geq 0$.
\end{lemma}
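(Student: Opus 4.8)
The plan is to exploit the fact that changing the sign of the first two columns of a rotation keeps it in $\SO(3)$ while, because the last entry of $\xprimo$ vanishes, it turns $F(R)$ into $-F(R)$. Concretely, set $D = \diag(-1,-1,1) \in \SO(3)$; then $D\xprimo = -\xprimo$, and for every $R \in \SO(3)$ one has $RD \in \SO(3)$ with $F(RD) = \int_S f\cdot R(D\xprimo)\,dx' = -F(R)$. Applying this with $R = \Id$, one of $F(\Id)$ and $F(D) = -F(\Id)$ is nonnegative, so $\max_{\SO(3)} F \ge 0$; this proves the second assertion.

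For the first assertion I would argue by contradiction: assume \eqref{eq:compatibility} holds and $\max_{\SO(3)} F = 0$. Then $F(R) \le 0$ for every $R$, while the identity $F(RD) = -F(R)$ also gives $F(R) = -F(RD) \ge 0$; hence $F \equiv 0$ on $\SO(3)$, so $\orot = \argmax_{\SO(3)} F = \SO(3)$. Now \eqref{eq:compatibility} forces $R^T f \cdot e_3 = f \cdot (Re_3) = 0$ a.e.\ in $S$ for every $R \in \SO(3)$. Testing this with $R = \Id$ and with two rotations sending $e_3$ to $e_1$ and to $e_2$, and intersecting the three resulting full-measure sets, we obtain $f_1 = f_2 = f_3 = 0$ a.e.\ in $S$, contradicting the standing assumption that $f$ is not identically zero. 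Therefore $\max_{\SO(3)} F > 0$.

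The argument is elementary throughout; the one step requiring a little care is the deduction of $f \equiv 0$ from the family of constraints $R^T f\cdot e_3 = 0$, $R \in \orot$, in the degenerate case $\orot = \SO(3)$. Testing against only three fixed rotations keeps this to a finite combination of a.e.\ identities, so I do not expect any real obstacle.
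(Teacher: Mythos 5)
Your proof is correct and takes essentially the same approach as the paper's: both rest on the observation that negating the first two columns of a rotation (your $R\mapsto RD$ with $D=\diag(-1,-1,1)$, the paper's $R\mapsto(-R^1,-R^2,R^3)$ — literally the same map) flips the sign of $F$, so $\max F\geq 0$, and both reach the strict inequality under \eqref{eq:compatibility} by noting that $F\equiv 0$ on $\SO(3)$ (i.e.\ $\orot=\SO(3)$) would be incompatible with $f\not\equiv 0$. The only real difference is in exposition: the paper states, without proof, that \eqref{eq:compatibility} forces $\orot\neq\SO(3)$ (the justification is only hinted at via \Cref{remark:dimension_R}), whereas you fill this gap explicitly by testing $f\cdot(Re_3)=0$ against three concrete rotations to conclude $f\equiv 0$. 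This makes your argument slightly more self-contained than the paper's, but the underlying mechanism is identical.
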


\begin{proof}
	Assume \eqref{eq:compatibility} and suppose by contradiction that $F(R) \leq 0$ for any rotation $R \in \SO(3)$. By \eqref{eq:compatibility} we have $\orot \neq \SO(3)$, hence the map $F$ can not vanish on the whole $\SO(3)$. Thus, there is a rotation $R$ such that $F(R) < 0$. Now consider the matrix
	\[
		\hat R = \begin{pmatrix}
			-R^1 & -R^2 & R^3
		\end{pmatrix} \,.
	\]	
	Note that $\hat R \in \SO(3)$ and $F(\hat R) = -F(R) > 0$. This gives the desired contradiction. The same argument applies to the second part of the statement.
\end{proof}

Consider now $R \in \orot$ and the skew-symmetric matrix
\[
	W = \begin{pmatrix}
		0 & 1 & 0\\
		-1 & 0 & 0\\
		0 & 0 & 0
	\end{pmatrix} \,.
\]
Since $F(RW) = 0$, we get
\begin{equation}\label{eq:c_symmetry}
	\int_S f \cdot R \begin{pmatrix}
		x_2 \\ 0\\ 0
	\end{pmatrix} \, dx' = \int_S f \cdot R \begin{pmatrix}
	0 \\ x_1\\ 0
	\end{pmatrix} \, dx'\,.
\end{equation}
For a given $R \in \orot$ we then define
\begin{align+}
	a(R) & = \int_S f \cdot R \begin{pmatrix}
	x_1 \\ 0\\ 0
	\end{pmatrix} \, dx' \,, \label{eq:definition_a} \\
	b(R) & = \int_S f \cdot R \begin{pmatrix}
	0 \\ x_2\\ 0
	\end{pmatrix} \, dx' \,, \label{eq:definition_b}\\
	c(R) & = \int_S f \cdot R \begin{pmatrix}
	0 \\ x_1\\ 0
	\end{pmatrix} \, dx' = \int_S f \cdot R \begin{pmatrix}
	x_2 \\ 0\\ 0
	\end{pmatrix} \, dx\,.\label{eq:definition_c}
\end{align+}
Note that by \Cref{lemma:sign_F} we have that $a(R) + b(R) = F(R) \geq 0$. Moreover, $a(R)$ and $b(R)$ can not be negative, as proved in the following lemma. In particular, when \eqref{eq:compatibility} holds, $a(R)$ and $b(R)$ cannot be both zero by \Cref{lemma:sign_F}.

\begin{lemma}\label{lemma:a_b_positive}
	It holds that $a(R), b(R) \geq 0$ for any $R \in \orot$.
\end{lemma}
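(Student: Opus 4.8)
The plan is to exploit the freedom, alluded to in the paragraph preceding the statement, of flipping the signs of columns of an optimal rotation. The crucial observation is that $F(A)$ depends only on the first two columns of $A$ (it pairs $f$ with $A\xprimo = x_1 A^1 + x_2 A^2$), so changing the sign of the third column together with one of the first two columns -- so as to keep the determinant equal to $+1$ -- produces another rotation whose value of $F$ is immediate to compute and, by optimality of $R$, cannot exceed $F(R)$.

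First I would fix $R \in \orot$ and write $R^1, R^2, R^3$ for its columns, so that $R\xprimo = x_1 R^1 + x_2 R^2$ and hence, by \eqref{eq:definition_a}--\eqref{eq:definition_b},
\[
    F(R) = \int_S f \cdot (x_1 R^1 + x_2 R^2)\, dx' = a(R) + b(R) \,.
\]

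Next I would introduce the matrix $\hat R$ whose columns are $-R^1, R^2, -R^3$. Its columns are still orthonormal, and since two of them have had their sign flipped, $\det \hat R = \det R = 1$; thus $\hat R \in \SO(3)$. As $R$ maximizes $F$ over $\SO(3)$, this gives $F(R) \geq F(\hat R)$. Since $\hat R\xprimo = -x_1 R^1 + x_2 R^2$, we have $F(\hat R) = -a(R) + b(R)$, and the inequality $a(R)+b(R) \geq -a(R)+b(R)$ yields $a(R) \geq 0$. Symmetrically, the matrix $\tilde R$ with columns $R^1, -R^2, -R^3$ also lies in $\SO(3)$, and from $F(R) \geq F(\tilde R) = a(R) - b(R)$ we conclude $b(R) \geq 0$.

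The argument is essentially immediate; the only point requiring care -- and the sole potential pitfall -- is to flip an \emph{even} number of columns, so that the modified matrix is still a genuine rotation in $\SO(3)$ (flipping a single column would land in $\Ort(3) \setminus \SO(3)$, where maximality of $R$ gives no information). No compactness or regularity input is needed beyond the definition of $\orot$ and the bilinear structure of $F$ encoded in \eqref{eq:definition_a}--\eqref{eq:definition_c}.
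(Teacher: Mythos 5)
Your proof is correct and takes essentially the same approach as the paper: the key construction in both is the rotation $\hat R = (-R^1, R^2, -R^3)$ (and its analogue for $b$), obtained by flipping two columns to stay in $\SO(3)$, combined with the maximality of $R$. The only cosmetic difference is that you argue directly from $F(R) \geq F(\hat R)$ to conclude $2a(R) \geq 0$, while the paper phrases the same inequality as a proof by contradiction and invokes \Cref{lemma:sign_F} along the way, which is actually unnecessary; your version is marginally cleaner.
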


\begin{proof}
	Suppose by contradiction that $a(R) < 0$ for some $R \in \orot$. Then by \Cref{lemma:sign_F} we have $b(R) = F(R) - a(R) \geq 0$. Consider the rotation
	\[
		\hat R = \begin{pmatrix}
			-R^1 & R^2 & -R^3
		\end{pmatrix} \in \SO(3)\,.
	\]	
	Then $F(R) \geq F(\hat R) = - a(R) + b(R) > a(R) + b(R) = F(R)$, which gives a contradiction. A similar proof can be done for $b(R)$.
\end{proof}

We can now give an explicit characterization of the tangent space $\tang_R$ in terms of the quantities $a(R), b(R)$ and $c(R)$.

\begin{prop}\label{prop:condition_coefficients_abc}
	Assume \eqref{eq:compatibility} and suppose that $\dim \orot=1$. Let $R \in \orot$. Then
	\[
		a(R)b(R) - c^2(R) = 0\,.
	\]
	Moreover,
	\begin{align}
		\tang_R & = \left\{W \in \rskew \colon W_{12} = 0\,, W_{13} = -\frac{c(R)}{a(R)}W_{23}\right\} && \text{if} \, \, a(R) \neq 0 \,,\\
		\tang_R & = \left\{W \in \rskew \colon W_{12} = 0\,, W_{23} = -\frac{c(R)}{b(R)}W_{13}\right\} && \text{if} \, \, b(R) \neq 0 \,.
	\end{align}
\end{prop}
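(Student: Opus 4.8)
The plan is to reduce both claims to an explicit formula for $F(RW^{2})$, $W \in \rskew$, written in terms of $a = a(R)$, $b = b(R)$, $c = c(R)$, and then to invoke elementary facts about binary quadratic forms. A direct matrix computation shows that the first two entries of the vector $W^{2}\xprimo$ are $-(W_{12}^{2}+W_{13}^{2})x_{1} - W_{13}W_{23}x_{2}$ and $-W_{13}W_{23}x_{1} - (W_{12}^{2}+W_{23}^{2})x_{2}$, whereas its third entry never enters $F(RW^{2}) = \int_{S} R^{T}f \cdot W^{2}\xprimo \, dx'$, since $R^{T}f \cdot e_{3} = 0$ a.e.\ by \eqref{eq:compatibility}. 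Substituting the definitions \eqref{eq:definition_a}--\eqref{eq:definition_c} and using the symmetry \eqref{eq:c_symmetry}, one obtains
\begin{equation*}
	F(RW^{2}) = -(a+b)\,W_{12}^{2} - \bigl(a\,W_{13}^{2} + 2c\,W_{13}W_{23} + b\,W_{23}^{2}\bigr) \,.
\end{equation*}

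By \Cref{lemma:sign_F} and \eqref{eq:compatibility} we have $a + b = F(R) = \max_{\SO(3)} F > 0$, while $a, b \ge 0$ by \Cref{lemma:a_b_positive}. Specializing the inequality $F(RW^{2}) \le 0$ from \eqref{eq:condition_maximality} to matrices with $W_{12} = 0$ shows that the binary form $q(s,t) := a s^{2} + 2c\,st + b\,t^{2}$ is positive semidefinite, so in particular $ab - c^{2} \ge 0$. Moreover, since $a + b > 0$ and $q \ge 0$, the two summands in the displayed identity are both nonpositive, hence $F(RW^{2}) = 0$ holds exactly when $W_{12} = 0$ and $q(W_{13},W_{23}) = 0$. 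By \eqref{eq:definition_tangent} the linear map $W \mapsto (W_{13},W_{23})$ therefore identifies $\tang_{R}$ with the zero set of $q$ in $\r^{2}$; as $\dim \tang_{R} = \dim \orot = 1$, this zero set must be a line through the origin. If $ab - c^{2} > 0$, then together with $a, b \ge 0$ this would force $a, b > 0$ and $q$ positive definite, with zero set $\{0\}$ --- a contradiction. Hence $ab - c^{2} = 0$, which is the first assertion.

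Finally, using $ab = c^{2}$: if $a \ne 0$, then $a\,q(s,t) = (as + ct)^{2}$, so $q(s,t) = 0 \iff a s + c t = 0 \iff s = -\tfrac{c}{a}\,t$; combined with the description of $\tang_{R}$ obtained above and the identifications $s = W_{13}$, $t = W_{23}$, this yields the stated form of $\tang_{R}$ when $a(R) \ne 0$. If $b \ne 0$, then $b\,q(s,t) = (cs + bt)^{2}$, and the symmetric computation gives the description when $b(R) \ne 0$.

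The only delicate point is the computation of $W^{2}\xprimo$ together with the use of \eqref{eq:compatibility} to discard its third component: without the compatibility hypothesis the term $\int_{S}(R^{T}f \cdot e_{3})\,(W_{12}W_{23}x_{1} - W_{12}W_{13}x_{2})\,dx'$ would survive in $F(RW^{2})$ and destroy the clean quadratic-form structure. Everything downstream of that identity is routine $2 \times 2$ linear algebra, with the hypothesis $\dim \orot = 1$ entering only through the dichotomy ``positive definite versus degenerate'' used to pin down $ab - c^{2} = 0$.
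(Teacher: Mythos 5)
Your proof is correct and follows essentially the same route as the paper: compute $F(RW^2)$ explicitly using \eqref{eq:compatibility} to kill the third component of $W^2\xprimo$, recognize the result as a quadratic form in $(W_{12},W_{13},W_{23})$ with the block $-\diag(a+b,\,[\begin{smallmatrix}a&c\\c&b\end{smallmatrix}])$, deduce $ab-c^2=0$ from $\dim\orot=1$, and complete the square to describe $\tang_R$. One small difference in the middle step is worth noting: the paper rules out $ab-c^2<0$ by arguing that the zero set of $q$ would then contain two lines spanning a $2$-dimensional subset, while you first invoke $F(RW^2)\le 0$ from \eqref{eq:condition_maximality} to conclude the binary form $q(s,t)=as^2+2cst+bt^2$ is positive semidefinite, giving $ab-c^2\ge 0$ directly; you then only need to exclude the strict case. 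Your version is arguably cleaner — the semidefiniteness argument replaces a dimension count with a one-line inequality, and it makes explicit that the indefinite case is in fact impossible (not merely inconsistent with $\dim\orot=1$), since the zero set of an indefinite form is a cone rather than a subspace. Everything else, including the observation that $a+b=F(R)>0$ forces $W_{12}=0$ on the zero set and the factorization $aq(s,t)=(as+ct)^2$ when $ab=c^2$, matches the paper's proof.
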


\begin{proof}
	By \eqref{eq:definition_tangent} the tangent space to $\orot$ at $R$ is the set of zeros of the map $W \in \rskew \mapsto F(RW^2)$. For a general skew-symmetric matrix $W$, we have
	\[
		(W^2)' = -\begin{pmatrix}
			W_{12}^2 + W_{13}^2 & W_{13}W_{23} \\
			W_{13}W_{23} & W_{12}^2 + W_{23}^2 \\
		\end{pmatrix} \,.
	\]
	Hence, by \eqref{eq:compatibility} we have
	\begin{align}
		F(RW^2) & = -(W_{12}^2 + W_{13}^2)a(R) - 2W_{13}W_{23}c(R) - (W_{12}^2 + W_{23}^2)b(R)
	\end{align}
	This expression can be considered as a quadratic form $q: \r^3 \to \r$ computed at the vector $(W_{12}, W_{13}, W_{23})$. We can identify $q$ with a symmetric matrix and study its sign. We have
	\[
		q = - \begin{pmatrix}
			a(R) + b(R) & 0 & 0\\
			0 & a(R) & c(R)\\
			0 & c(R) & b(R)
		\end{pmatrix} \,,
	\]
	so by \Cref{lemma:sign_F}--\ref{lemma:a_b_positive} the sign of $q$ depends solely on the minor $a(R)b(R)-c^2(R)$. If $a(R)b(R)-c^2(R) > 0$, the only zero of $q$ is at $0$, contradicting the hypothesis on the dimension of $\orot$. If $a(R)b(R)-c^2(R) < 0$, the set of zeros of $q$ contains two lines that span a subset of dimension 2 in $\r^3$, contradicting again the assumption $\dim \orot= 1$. Therefore, it must hold that $a(R)b(R)-c^2(R) = 0$. In this case, we have
	\begin{align}
		q(W) & = -W_{12}^2F(R) - \left(W_{13} \sqrt{a(R)} + W_{23} \frac{c(R)}{\sqrt{a(R)}}\right)^2 && \text{if} \, \, a(R) \neq 0\,,\\
		q(W) & = - W_{12}^2F(R) - \left(W_{23} \sqrt{b(R)} + W_{13} \frac{c(R)}{\sqrt{b(R)}}\right)^2 && \text{if} \, \, b(R) \neq 0 \,,
	\end{align}
	concluding the characterization of the tangent space by \Cref{lemma:sign_F} ($F(R) > 0$).
\end{proof}

\begin{cor}\label{cor:structure_normal}
	Assume \eqref{eq:compatibility} and suppose that $\dim \orot= 1$ and let $R \in \orot$. Then
	\begin{align}
	\normal_R & = \left\{W \in \rskew \colon W_{23} = \frac{c(R)}{a(R)}W_{13}\right\} && \text{if} \, \, a(R) \neq 0 \,,\\
	\normal_R & = \left\{W \in \rskew \colon W_{13} = \frac{c(R)}{b(R)}W_{23}\right\} && \text{if} \, \, b(R) \neq 0 \,.
	\end{align}
\end{cor}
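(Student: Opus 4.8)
The plan is to read off $\normal_R$ directly from the description of $\tang_R$ obtained in \Cref{prop:condition_coefficients_abc}, using the definition \eqref{eq:definition_normal}. The one preliminary observation I would record is that for two skew-symmetric matrices $V, W \in \rskew$ the Frobenius pairing is
\[
	V : W = 2\left(V_{12}W_{12} + V_{13}W_{13} + V_{23}W_{23}\right) \,,
\]
so that orthogonality in $\rskew$ is governed entirely by the three upper-triangular entries. Since $\tang_R$ is the tangent space to the one-dimensional manifold $\orot$ at $R$, it is a line, hence spanned by a single generator; consequently $W \in \normal_R$ if and only if $W$ is Frobenius-orthogonal to that one generator.

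First I would treat the case $a(R) \neq 0$. By \Cref{prop:condition_coefficients_abc}, $\tang_R$ is the line of skew matrices with $W_{12} = 0$ and $W_{13} = -\tfrac{c(R)}{a(R)}W_{23}$; a convenient (nonzero, since $a(R)\neq 0$) generator is the matrix $W^\ast$ with $W^\ast_{12} = 0$, $W^\ast_{23} = a(R)$, $W^\ast_{13} = -c(R)$. Then $W \in \normal_R$ iff $W : W^\ast = 0$, which by the pairing formula reads $-c(R)W_{13} + a(R)W_{23} = 0$, i.e.\ $W_{23} = \tfrac{c(R)}{a(R)}W_{13}$. This is exactly the claimed description. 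The case $b(R) \neq 0$ is handled symmetrically: here $\tang_R$ is spanned by the matrix $W^\ast$ with $W^\ast_{12} = 0$, $W^\ast_{13} = b(R)$, $W^\ast_{23} = -c(R)$, and $W : W^\ast = 0$ becomes $b(R)W_{13} - c(R)W_{23} = 0$, i.e.\ $W_{13} = \tfrac{c(R)}{b(R)}W_{23}$.

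I do not anticipate any genuine obstacle: the content of the corollary is purely linear-algebraic once \Cref{prop:condition_coefficients_abc} is available, and the only subtlety worth a sentence is that the hypothesis $\dim\orot = 1$ is what guarantees $\tang_R$ is a line, so that a single orthogonality condition cuts out $\normal_R$ (a two-dimensional subspace of $\rskew$, consistent with $\dim\rskew = 3$). One may also note in passing that $a(R) \neq 0$ and $b(R) \neq 0$ are not exclusive — when both hold, the relation $a(R)b(R) = c^2(R)$ from \Cref{prop:condition_coefficients_abc} shows the two descriptions of $\normal_R$ coincide.
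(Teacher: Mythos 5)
Your proof is correct and is exactly the computation the paper intends (it states this as a corollary of \Cref{prop:condition_coefficients_abc} with no written proof): you take the Frobenius pairing on $\rskew$, pick the generator of the line $\tang_R$ given by the proposition, and the single orthogonality condition yields the stated $2$-dimensional description of $\normal_R$. The closing observation that $a(R)b(R)=c^2(R)$ reconciles the two formulas when both $a(R)\neq 0$ and $b(R)\neq 0$ is a nice sanity check.
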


\noindent \textbf{Acknowledgments.} The author acknowledges support by PRIN2022 number 2022J4FYNJ funded by MUR, Italy, and by the European Union--Next Generation EU. 
The author acknowledges support by INdAM--GNAMPA Project CUP E5324001950001.

\bibliographystyle{plain}
\bibliography{bibliography}

\begin{thebibliography}{10}

\bibitem{ABELS2010}
Helmut Abels, Maria~Giovanna Mora, and Stefan M\"{u}ller.
\newblock The time-dependent Von Kármán plate equation as a limit of 3d nonlinear elasticity.
\newblock {\em Calculus of Variations and Partial Differential Equations}, 41(1–2):241–259, August 2010.

\bibitem{ABELS2011}
Helmut Abels, Maria~Giovanna Mora, and Stefan M\"{u}ller.
\newblock Thin vibrating plates: long time existence and convergence to the Von Kármán plate equations.
\newblock {\em GAMM-Mitteilungen}, 34(1):97–101, April 2011.

\bibitem{CARROLL1989}
Sean~Michael Carroll and Bradley~William Dickinson.
\newblock Construction of neural nets using the radon transform.
\newblock {\em International 1989 Joint Conference on Neural Networks}, pages 607--611 vol.1, 1989.

\bibitem{CYBENKO1989}
George Cybenko.
\newblock Approximation by superpositions of a sigmoidal function.
\newblock {\em Mathematics of Control, Signals, and Systems}, 2(4):303–314, 1989.

\bibitem{FOPPL1907}
August F\"oppl.
\newblock {\em Vorlesung \"uber technische Mechanik}, volume~5.
\newblock Leipzig, 1907.

\bibitem{FRIEDRICH2020}
Manuel Friedrich and Martin Kružík.
\newblock Derivation of Von Kármán Plate Theory in the Framework of Three-Dimensional Viscoelasticity.
\newblock {\em Archive for Rational Mechanics and Analysis}, 238(1):489–540, June 2020.

\bibitem{FRIESECKE2002}
Gero Friesecke, Richard~D. James, and Stefan M{\"u}ller.
\newblock A theorem on geometric rigidity and the derivation of nonlinear plate theory from three-dimensional elasticity.
\newblock {\em Communications on Pure and Applied Mathematics}, 55(11):1461--1506, 2002.

\bibitem{FRIESECKE2006}
Gero Friesecke, Richard~D. James, and Stefan M{\"u}ller.
\newblock A hierarchy of plate models derived from nonlinear elasticity by $\Gamma$-convergence.
\newblock {\em Archive for rational mechanics and analysis}, 180:183--236, 2006.

\bibitem{HORNUNG2011}
Peter Hornung.
\newblock Approximation of Flat $W^{2, 2}$ Isometric Immersions by Smooth Ones.
\newblock {\em Archive for Rational Mechanics and Analysis}, 199(3):1015--1067, 2011.

\bibitem{VONKARMAN1910}
Theodore~Von K\'arm\'an.
\newblock {\em Festigkeitsprobleme im Maschinenbau}, volume IV/4.
\newblock Leipzig, 1910.

\bibitem{KIRCHOFF1850}
Gustav Kirchhoff.
\newblock \"{U}ber das Gleichgewicht und die Bewegung einer elastischen Scheibe.
\newblock {\em Journal f\"{u}r die reine und angewandte Mathematik (Crelles Journal)}, 1850(40):51--88, 1850.

\bibitem{LECUMBERRY2009}
Myriam Lecumberry and Stefan M\"{u}ller.
\newblock Stability of Slender Bodies under Compression and Validity of the Von K{\'{a}}rm{\'{a}}n Theory.
\newblock {\em Archive for Rational Mechanics and Analysis}, 193(2):255--310, 2009.

\bibitem{MADDALENA2018}
Francesco Maddalena, Danilo Percivale, and Franco Tomarelli.
\newblock Variational problems for {F}\"oppl--Von {K}\'arm\'an plates.
\newblock {\em SIAM J. Math. Anal.}, 50(1):251--282, 2018.

\bibitem{MAOR2021}
Cy~Maor and Maria~Giovanna Mora.
\newblock Reference Configurations Versus~Optimal Rotations: A Derivation of Linear Elasticity from Finite Elasticity for all Traction Forces.
\newblock {\em Journal of Nonlinear Science}, 31(3), 2021.

\bibitem{NEUKAMM2013}
Stefan Neukamm and Igor Velčić.
\newblock Derivation of a homogenized Von K\'arm\'an plate theory from 3D nonlinear elasticity.
\newblock {\em Mathematical Models and Methods in Applied Sciences}, 23(14):2701–2748, October 2013.

\bibitem{VELCIC2016}
Igor Velčić.
\newblock On the general homogenization of Von Kármán plate equations from three-dimensional nonlinear elasticity.
\newblock {\em Analysis and Applications}, 15(01):1–49, November 2016.

\end{thebibliography}

\end{document}